\definecolor{green}{rgb}{0,0.8,0} 
\definecolor{deepgreen}{cmyk}{1,0,1,0.5}
\newcommand{\Del}[1]{}
\numberwithin{equation}{section}
\newtheorem{theorem}{Theorem}[section]
\newtheorem{lemma}[theorem]{Lemma}
\newtheorem{proposition}[theorem]{Proposition}
\newtheorem{remark}[theorem]{Remark}
\newcommand{\abs}[1]{\vert#1\vert}
\newcommand{\barA}{{\overline A}}
\newcommand{\barE}{{\overline E}}
\newcommand{\barH}{{\overline H}}
\renewcommand{\hbar}{{\underline h}}
\newcommand{\tilI}{{\tilde{I}}}
\newsavebox{\@brx}
\newcommand{\llangle}[1][]{\savebox{\@brx}{\(\m@th{#1\langle}\)}%
	\mathopen{\copy\@brx\kern-0.5\wd\@brx\usebox{\@brx}}}
\newcommand{\rrangle}[1][]{\savebox{\@brx}{\(\m@th{#1\rangle}\)}%
	\mathclose{\copy\@brx\kern-0.5\wd\@brx\usebox{\@brx}}}
\begin{document}
	\title{Well-posedness for the free boundary barotropic fluid model in general relativity}
	\author{ZeMing Hao 
		\and Wei Huo 
		\and Shuang Miao}
	\maketitle
\begin{abstract}
	In the framework of general relativity, the dynamics of a general barotropic fluid are coupled to the Einstein equations, which govern the structure of the underlying spacetime. We establish a priori estimates and well-posedness in Sobolev spaces for this model with a free boundary. Within the frame parallel-transported by the fluid velocity, we decompose the fluid and geometric quantities. The fluid components are estimated via a coupled interior-boundary wave equation, while the geometric quantities are analyzed through the Bianchi equations. Compared to a previous work \cite{Miao1}, the results in present paper allow general equations of state and non-zero vorticities. 
\end{abstract}    
\section{Introduction}
	In the context of general relativity, the dynamics of an ideal fluid are coupled to Einstein's equations
	\begin{equation}\label{einstein equ}
	R_{\mu\nu}-\frac{1}{2}g_{\mu\nu}S=T_{\mu\nu},
	\end{equation}
	where $R_{\mu\nu}$ is the Ricci curvature and $S=g^{\mu\nu}R_{\mu\nu}$ is the scalar curvature of the metric $g$. The energy-momentum tensor $T$ is given by 
	\begin{equation}\label{energy momentum tensor}
	   T_{\mu\nu}=\begin{cases}
	      (\rho + p )u_{\mu}u_{\nu}+pg_{\mu\nu} &\text{in}\ \Omega,\\
	      0  &\text{in}\ \Omega^{c},\\
	   \end{cases}
	\end{equation}
	where $\rho$ is the energy density, $p$ is the pressure, $u$ is a future-directed unit timelike 4-vector, and $\Omega$ denotes the fluid domain. The components of the 4-velocity $u^{\mu}$ satisfy   
	\begin{equation*}
	       g_{\alpha\beta}u^{\alpha}u^{\beta}=-1,\ u^{0}>0.
	\end{equation*}
	The motion of the fluid in $\Omega$ is governed by the conservation laws
	\begin{equation}\label{energycons equ}
	     \nabla_{\mu}T^{\mu\nu}=0,
	\end{equation} 
	and
	\begin{equation}\label{masscons equ}
	     \nabla_{\mu}I^{\mu}=0,
	\end{equation}
	where $\nabla$ is the covariant derivative associated with the metric $g$, and
	\begin{equation*}
	    I^{\mu}=nu^{\mu}
	\end{equation*}
	is the particle current. Here $n$ is the number density of particles. 
	
	Let $s$ denote the entropy per particle and $\theta$  the temperature. According to the laws of thermodynamics, the energy density $\rho$ and the pressure $p$ can be considered to be nonnegative functions of $n$ and $s$, satisfying the relations 
	\begin{equation*}
	    p=n\frac{\partial \rho}{\partial n}-\rho,\ \theta=\frac{1}{n}\frac{\partial \rho}{\partial s}.
	\end{equation*}
	The sound speed $\eta$ is defined by 
	\begin{equation*}
	    \eta^{2}=\left(\frac{\partial p}{\partial \rho}\right)_{s}.
	\end{equation*}
	It is assumed that $0< \eta\le c$, where the speed of light $c$ is normalized to 1. The fluid is barotropic, that is, the pressure is a function of the density
	\begin{equation}\label{state equ}
	      p=f(\rho),\quad f^{'}>0.
	\end{equation}
	In this case, the equations $(\ref{energycons equ})$ and $(\ref{masscons equ})$ are decoupled. Furthermore, $\rho$ and $p$ are functions of a single variable $\sigma$, given by
	\begin{equation*}
	    p+\rho=\sigma\frac{\mathrm{d}\rho}{\mathrm{d}\sigma}.
	\end{equation*}
	In addition to the fact that the function $f$ in $(\ref{state equ})$ is strictly increasing, we further assume that when the pressure $p=0$ the corresponding energy density $\rho_{0}>0$ and the integral
	\begin{equation*}
	     \int_{0}^{p}\frac{\mathrm{d}p}{\rho+p}=F(p)
	\end{equation*}
	exists. Then the conservation laws $(\ref{energycons equ})$ can be expressed in terms of the fluid velocity 
	\begin{equation*}
	    V=\Vert V \Vert u,
	\end{equation*}
	where $\Vert V \Vert:=e^{F}=\sqrt{-g(V,V)}$. 
	
	The component of $(\ref{energycons equ})$ along $u$ and the projection of $(\ref{energycons equ})$ onto the space orthogonal to $u$ reduce to the following equations in $\Omega$ (See \cite{Christodoulou}):
	\begin{equation}\label{orthu}
	\nabla_{\mu}\left(G\left(\Vert V\Vert\right)V^{\mu} \right)=0,
	\end{equation}
	\begin{equation}\label{alongu}
	     \nabla_{V}V^{\mu}+\frac{1}{2}\nabla^{\mu}\left(\Vert V\Vert^2\right)=0,
	\end{equation}	
	where 
	\begin{equation*}
	     G=\frac{\rho +p}{\Vert V\Vert^{2}}.
	\end{equation*}
    In the case where a barotropic fluid is additionally irrotational and the sound speed equals the speed of light, the resulting model is known as the hard phase model. This model represents an idealized physical scenario in which the mass-energy density exceeds the nuclear saturation density during gravitational collapse. Conversely, the equation of state remains soft when the mass-energy density is below the nuclear saturation density (see \cite{Christodoulou,Rezzolla}). The well-posedness of the free boundary problem of the hard phase model is proved in \cite{Miao1, Miao2}. See also \cite{Wang} for details on general barotropic fluids in the case of relativistic fluids in Minkowski spacetime.
    
\subsection{Model setup and satement of the main result}
	In this paper, we investigate the motion of a general barotropic fluid coupled to Einstein's eqautions with free boundary, surrounded by vaccuum. The fluid domain is, in principle, unknown and must be identified on the basis of the boundary conditions: 
	\begin{equation*}
	   \begin{aligned}
	      &\Vert V\Vert=1\  \text{on}\ \partial\Omega,\\
	      &V|_{\partial\Omega}\in \mathcal{T}\partial\Omega.
	   \end{aligned}
	\end{equation*}
	Here, $\mathcal{T}\partial\Omega$ denotes the tangent space of $\partial\Omega$. The first condition, which is equivalent to $p=0$ on $\partial\Omega$, is the analogue of no surface tension condition in the Newtonian setting. The second condition ensures that no fluid particle moves across $\partial\Omega$. Since $\Vert V\Vert^{2}$ is constant on the boundary, $\nabla\Vert V\Vert^{2}$ is normal to $\partial\Omega$. Thus, we can express this as
	\begin{equation*}
	\nabla\Vert V\Vert^{2}=-an,\quad \text{on}\ \partial\Omega,
	\end{equation*}  
	where 
	\begin{equation*}
	a:=\sqrt{\nabla_{\mu}\Vert V\Vert^{2}\nabla^{\mu}\Vert V\Vert^{2}}.
	\end{equation*}
	
	The free boundary problem for the general barotropic fluid equations in our consideration can be summarized as
	\begin{equation}\label{main equ}
	   \begin{cases}
	       \nabla_{V}V+\frac{1}{2}\nabla(\Vert V\Vert^{2})=0\ &\text{in}\ \Omega,\\
	       \nabla_{\mu}\left(G\left(\Vert V\Vert\right)V^{\mu} \right)=0\ &\text{in}\ \Omega,\\
	       \Vert V\Vert=1\  &\text{on}\ \partial\Omega,\\
	       V|_{\partial\Omega}\in \mathcal{T}\partial\Omega.
	   \end{cases}
	\end{equation}
	We then use $(\ref{einstein equ})$, which is coupled to the fluid, to derive governing equations for the evolution of the metric. Considering the diffeomorphism invariance of the equations, we will fix coordinates and choose a gauge to study the problem. Specifically, we adopt Lagrangian coordinates, $(t,x)=(x^{0},x^{1},x^{2},x^{3})$, such that $\frac{V}{\Vert V\Vert}=\partial_{t}$ in the fluid domain. The fluid velocity $V$ is extended to the exterior $\Omega^{c}$ using Sobolev extensions, ensuring that $V$ remains parallel to $\partial_{t}$. In particular, $V$ equals $\partial_{t}$ outside a compact set. Since the fluid domain is structured in a product form, this extension can be achieved through Sobolev extensions on each slice, ensuring that $\partial_{t}$ and the extension commute. 
	
	\begin{remark}
		In light of the fact that $V$ is tangential to the boundary of the fluid domain, the latter, when restricted to the interval $[0,T]$, can be defined in our coordinates as follows:
		\begin{equation}\label{fluid domain equ}
		\Omega_{0}^{T}=\left[0,T\right] \times \Omega_{0},
		\end{equation} 
		where, in general 
		\begin{equation*}
		\Omega_{t}=\Omega\cap\left\{x^{0}=t\right\},
		\end{equation*}
		and $\Omega_{0}\subseteq\left\{x^{0}=0\right\}$ represents the initial domain, which is assumed to be diffeomorphic to the unit ball. For the boundary, we use the notation $\partial\Omega_{0}^{T}=\left[0,T\right]\times\partial\Omega_{t}$. 
	\end{remark}
	
	In the following discussion, we will analyze the problem using an orthogonal frame that satisfies 
	\begin{equation*}
	    g(e_{I},e_{I})=\epsilon_{I},\quad \epsilon_{0}=-1,\ \epsilon_{\tilde{I}}=1, \ \tilI=1, 2, 3.
	\end{equation*}
	The frame elements $\left\{e_{I}\right\}$ are chosen such that $\nabla_{V}e_{I}=0$, that is, they are parallel transported by the fluid velocity. Additionally, We require that initially
	\begin{equation*}
	     e_{\tilde{I}}|_{t=0} \ \text{tangent to} \ \left\{t=0\right\}, \ \tilde{I}=1, 2, 3.
	\end{equation*}
	Thus, we have the relations
	\begin{equation}\label{connection of g}
	      m_{IJ}=g_{\mu\nu}e_{I}^{\mu}e_{J}^{\nu}, \quad g^{\mu\nu}=m^{IJ}e_{I}^{\mu}e_{J}^{\nu},
	\end{equation} 
	where $m_{IJ}$ denotes the components of the Minkowski metric in rectangular coordinates and $e_{I}=e_{I}^{\mu}\partial_{\mu}$. Let $\Gamma_{IJ}^{K}$ be the connection coefficients for this frame, such that 
	\begin{equation*}
	      \nabla_{e_{I}}e_{J}=\sum_{K} \Gamma_{IJ}^{K}e_{K}.
	\end{equation*}
	The fluid velocity $V$ can be expressed as 
	\begin{equation*}
	     V=\sum_{K}\Theta^{K}e_{K},
	\end{equation*}
	where $\Theta^{K}$ are components of $V$ with respect to the orthonormal frame $\left\{e_{I}\right\}_{I=0,1,2,3}$.  
	The renormalized fluid velocity is given by $\hat{V}=\frac{V}{\Vert V \Vert}=\partial_{t}$, where
	\begin{equation*}
	\hat{V}=\hat{\Theta}^{K}e_{K},\quad \hat{\Theta}^{K}=\frac{1}{\Vert V \Vert}\Theta^{K}. 
	\end{equation*}
	Note that the tangency condition have incroporated into the definition $(\ref{fluid domain equ})$ of $\Omega$. In terms of $\Theta^{I}$, equations $(\ref{main equ})$ become
	\begin{equation}\label{frame main equ}
	    \begin{cases}
	        \epsilon_{I}D_{V}\Theta^{I}+\frac{1}{2}D_{I}\Vert V \Vert^{2}=0\quad &\text{in} \ \Omega \\
	        D_{V}G+G\nabla_{I}\Theta^{I}=0\quad &\text{in} \ \Omega \\
	        \Theta_{I}\Theta^{I}=-1\quad &\text{on} \ \partial\Omega
	    \end{cases}.
	\end{equation}
	We also use the notation 
	\begin{equation*}
	    \nabla_{I}\Theta^{J}=D_{I}\Theta^{J}+\Gamma_{IK}^{J}\Theta^{K}.
	\end{equation*}
    Here and in what follows, $D_{I}$ stands for the derivative in the direction of $e_{I}$ applied to a scalar function, 
    \begin{equation*}
        D_{I}f=e_{I}(f):=e_{I}^{\mu}\partial_{\mu}f.
    \end{equation*}
    We write the components of the curvature tensor in the frame as
	\begin{equation*}
	     R_{IJKL}=R(e_{I},e_{J},e_{K},e_{L}).
	\end{equation*} 
   The Ricci tensor and scalar curvature are then expressed as
	\begin{equation*}
	    R_{IJ}=\sum_{K}\epsilon_{K}R_{IKJK},
	\end{equation*}
	and
	\begin{equation*}
	    S=\sum_{I}\epsilon_{I}R_{II}.
	\end{equation*}
	Given the discontinuity of the energy-momentum tensor across the fluid boundary, the curvature is an $L^{2}$ function. However, under appropriate initial regularity assumptions, the curvature becomes more regular within both $\Omega$ and $\Omega^{c}$. Finally, expressing the Einstein equations $(\ref{einstein equ})$ in the chosen frame yields
	\begin{equation}\label{Ricci equ}
	R_{IJ}=\begin{cases}
	G\Theta_{I}\Theta_{J}-pm_{IJ}+\frac{1}{2}G\Vert V\Vert^{2}m_{IJ} \quad &\text{in} \ \Omega\\
	0 \quad &\text{in} \ \Omega^{c}
	\end{cases}.
	\end{equation}
	The condition $\nabla_{V}e_{I}=0$ gives transport equations for $e_{I}$ in terms of $\Gamma_{IJ}^{K}$ and $\Theta^{I}$, and for $\Gamma_{IJ}^{K}$ in terms of $R_{IJKL}$ and $\Theta^{I}$. Indeed, through cumbersome calculations, $e_{I}$ and $\Gamma_{IJ}^{K}$ satisfy
	\begin{equation}\label{transport e}
        \begin{aligned}
           &\partial_{t}e_{\tilde{I}}=-(D_{\tilde{I}}\hat{\Theta}^{J})e_{J}-\hat{\Theta}^{J}\Gamma_{\tilde{I}J}^{K}e_{K},\quad \tilde{I}=1,2,3,\\
           &e_{0}=\frac{1}{\hat{\Theta}^{0}}(\hat{V}-\hat{\Theta}^{\tilde{I}}e_{\tilde{I}}),
        \end{aligned}
	\end{equation}
	and
	\begin{equation}\label{transport Ch}
	   \begin{aligned}
	       &\partial_{t}\Gamma_{\tilde{I}J}^{K}=\hat{\Theta}^{I}(R^{K}_{\ JI\tilde{I}}-\Gamma_{LJ}^{K}\Gamma_{\tilde{I}I}^{L})-\Gamma_{IJ}^{K}D_{\tilde{I}}\hat{\Theta}^{I},\quad \tilde{I}=1,2,3,\\
	       &\Gamma_{0J}^{K}=-\frac{\hat{\Theta}^{\tilde{I}}}{\hat{\Theta}^{0}}\Gamma_{\tilde{I}J}^{K}.
	   \end{aligned}
	\end{equation}
	
	We assume that the initial fluid domain $\Omega_{0}$ and the initial data $\Theta_{0}^{I}=\Theta^{I}(0)$ are provided, satisfying the following conditions:
	\begin{equation}\label{compatibility conditions}
	\begin{cases}
	\Vert V\Vert_{0}^{2}:=-\sum_{I}\epsilon_{I}(\Theta_{0}^{I})^{2}\ge 1, \Theta_{0}^{0}>0,\quad &\text{in} \quad \Omega_{0}\\
	\Vert V\Vert_{0}^{2}=1,\quad &\text{on} \quad \partial\Omega_{0}\\
	\nabla_{I}\Vert V\Vert_{0}^{2}\nabla^{I}\Vert V\Vert_{0}^{2}\ge c_{0}^{2}>0,\quad &\text{on} \quad \partial\Omega_{0}
	\end{cases}.
	\end{equation}
   The last term of the above system is the relativistic analogue of the Taylor sign condition, which will be used crucially in the analysis, and whose failure is known to lead to instabilities, cf. \cite{Geoffrey}. It is assumed that, at $t=0$, the following regularity and compatibility conditions are satisfied
    \begin{equation}\label{regularity conditions}
       \begin{aligned}
          \partial^{p}\partial_{t}^{k}\Theta^{I}, \partial^{p}\partial_{t}^{k+1}\Vert V\Vert^{2}\in L^{2}(\Omega_{t}), \quad& k\le K+1,2p+k\le K+2,\\
          \partial_{t}^{k}\Vert V\Vert^{2}\in H_{0}^{1}(\Omega_{t}), \quad& k\le K+1\\
          \partial_{t}^{K}R_{IJML},\  \partial\partial_{t}^{K-1}R_{IJML},\  \partial^{p}\partial_{t}^{k}R_{IJKL}\in L^{2}(\Omega_{t})\cap L^{2}(\Omega_{t}^{c}),\quad &2p+k\le K,\\
          \partial_{t}^{k}R_{IJKL}\in L^{2}(\mathbb{R}^{3}),\quad &k\le K,\\
          \partial^{p}\partial_{t}^{k}e_{I}\in L^{2}(\Omega_{t})\cap L^{2}(\Omega_{t}^{c}), \quad &2p+k\le K+1,\\
          e_{I}-\partial_{I}\in L^{2}(\Omega_{t})\cap L^{2}(\Omega_{t}^{c}), \quad &I=0,1,2,3,\\
          \partial^{p}\partial_{t}^{k}\Gamma_{IJ}^{K}\in L^{2}(\Omega_{t})\cap L^{2}(\Omega_{t}^{c}), \quad &2p+k\le K+1.
       \end{aligned}
    \end{equation}
	We also require the following initial vanishing conditions:	 
	\begin{equation}\label{vanishing fluid quantities}
	   \begin{aligned}
	       &(\Theta^{J}\nabla_{J}G+G\nabla_{I}\Theta^{I})|_{t=0},\quad(\Vert V\Vert^{2}+\Theta^{I}\Theta_{I})\big|_{t=0}, \quad (\Theta^{J}\nabla_{J}\Theta_{I}+\frac{1}{2}\nabla_{I}\Vert V\Vert^{2})\big|_{t=0},\\
	       &(\Theta^{J}\nabla_{J}\left(\Theta^{K}\nabla_{K}\Theta_{I}\right)-\frac{1}{2}\nabla^{J}\Vert V\Vert^{2}\nabla_{J}\Theta_{I}+\frac{1}{2}\nabla_{I}(\Theta^{J}\nabla_{J}\Vert V\Vert^{2})-\frac{1}{2}\epsilon_{J}\omega_{IJ}\nabla_{J}\Vert V\Vert^{2})\big|_{t=0},\\
	       &(\square \Vert V\Vert^{2}+2(\nabla^{I}\Theta^{J})(\nabla_{I}\Theta_{J})+2R_{KI}\Theta^{K}\Theta^{I}+2\epsilon_{K}\nabla_{K}\omega_{KI}\Theta^{I}-\frac{2}{G}(\Theta^{J}\nabla_{J}(\Theta^{K}\nabla_{K}G)+\nabla_{J}\Theta^{J}\Theta^{K}\nabla_{K}G))\big|_{t=0},\\
	       &(\omega_{IJ}-\nabla_{I}\Theta_{J}+\nabla_{J}\Theta_{I})\big|_{t=0}=0
	   \end{aligned}
	\end{equation}
	for the fluid quantities, and 
	\begin{equation}\label{vanishing geometric quantities}
	   \begin{aligned}
	      &(\Gamma_{JI}^{K}e_{K}-\Gamma_{IJ}^{K}e_{K}+\left[e_{I}, e_{J}\right], \quad R_{[IJK]L})\big|_{t=0}, \quad (R_{IJKL}-R_{KLIJ})|_{t=0},\quad\nabla_{[I}R_{JK]LM}|_{t=0}\\
	      &(R_{IJ}-\chi_{\Omega}(G\Theta_{I}\Theta_{J}-pg_{IJ}+\frac{1}{2}G\Vert V\Vert^{2}g_{IJ}))\big|_{t=0},\\ &\left\{\nabla^{J}R_{JIKL}-\chi_{\Omega}\left[\nabla_{K}\left(G\Theta_{L}\Theta_{I}-pm_{LI}+\frac{1}{2}G\Vert V\Vert^{2}m_{LI}\right)-\nabla_{L}\left(G\Theta_{K}\Theta_{I}-pm_{KI}+\frac{1}{2}G\Vert V\Vert^{2}m_{KI}\right)\right]\right\}\Big|_{t=0},\\
	      &(R_{\ MIJ}^{K}-(D_{I}\Gamma_{JM}^{K}-D_{J}\Gamma_{IM}^{K})-(\Gamma_{IL}^{K}\Gamma_{JM}^{L}-\Gamma_{JL}^{K}\Gamma_{IM}^{L})-(\Gamma_{JI}^{L}-\Gamma_{IJ}^{L})\Gamma_{LM}^{K}\big|_{t=0}=0
	   \end{aligned}
	\end{equation}
	for the geometric quantities. Additionally, we assume that the equations and the geometric properties of the curvature tensor are initially satisfied. We then present the main theorem of this study.
	\begin{theorem}\label{main thm}
		Assume that the regularity and compatibility conditions $(\ref{compatibility conditions})$- $(\ref{vanishing geometric quantities})$ are satisfied initially, with $K$ sufficiently large. Then, there exists $T>0$ such that a unique solution to the system $(\ref{frame main equ})$- $(\ref{transport Ch})$ exists in domain $\Omega=[0,T]\times\Omega_{0}$, satisfying the regularity conditions $(\ref{regularity conditions})$ for all $t\in[0,T]$. Moreover, the fluid velocity $u$, given by
		\begin{equation*}
		     u:=\frac{V}{\Vert V\Vert}=\frac{1}{\sqrt{-\sum_{J}\epsilon_{J}(\Theta^{J})^{2}}}\Theta^{K}e_{K}
		\end{equation*}
		and the metric $g$, defined by 
		\begin{equation*}
		    g^{\mu\nu}:=-e_{0}^{\mu}e_{0}^{\nu}+\sum_{\tilde{I}=1}^{3}e_{\tilde{I}}^{\mu}e_{\tilde{I}}^{\nu},
		\end{equation*}
		satisfy the Einstein's equations $(\ref{einstein equ})$, as well as the conservation laws $(\ref{energycons equ})$ and $(\ref{masscons equ})$.
	\end{theorem}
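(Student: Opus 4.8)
The plan is to solve \eqref{frame main equ}--\eqref{transport Ch} by a Picard-type iteration that at each stage decouples the fluid sector $(\Theta^I,\Vert V\Vert^2,\omega_{IJ})$ from the geometric sector $(R_{IJKL},\Gamma_{IJ}^K,e_I)$, to prove bounds uniform in the iteration index at the staggered regularity of \eqref{regularity conditions}, to pass to the limit, and finally to recover \eqref{einstein equ}, \eqref{energycons equ} and \eqref{masscons equ} by propagation of constraints. The hierarchy in \eqref{regularity conditions} — fluid quantities at order $2p+k\le K+2$, the frame and connection at $K+1$, the curvature only at $K$ — is what one expects to make the loop close: differentiating the fluid equations feeds curvature sources through \eqref{transport e}--\eqref{transport Ch}, and the curvature, one derivative rougher, feeds back into the coefficients of the fluid equations with no net derivative loss.

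\textbf{Step 1 (linearized problems).} Given the $n$-th geometry, I would differentiate the first equation of \eqref{frame main equ} and eliminate $\nabla_I\Theta^I$ by means of the second, obtaining a quasilinear wave equation $\square_{(n)}(\Vert V\Vert^2)_{(n+1)}=\mathcal N_{(n)}$ on $\Omega_0^T$, with $\square_{(n)}$ the d'Alembertian of the metric built from $e_{I,(n)}$ and $\mathcal N_{(n)}$ collecting quadratic fluid terms, the Ricci terms of \eqref{Ricci equ}, and vorticity terms, supplemented by the Dirichlet condition $(\Vert V\Vert^2)_{(n+1)}=1$ on $\partial\Omega_0^T$. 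One then recovers $\Theta^I_{(n+1)}$ by integrating the first equation of \eqref{frame main equ} along $\partial_t$, and $\omega_{IJ}$ from its transport equation. With the updated fluid data, $R_{IJKL,(n+1)}$ is obtained from the hyperbolic Bianchi system — the once-contracted identity $\nabla^J R_{JIKL}=\nabla_K R_{LI}-\nabla_L R_{KI}$, with $R_{LI}$ substituted from \eqref{Ricci equ}, together with the evolution form of the second Bianchi identity along $\partial_t$ — which carries an interface source on $\partial\Omega_0^T$ coming from the factor $\chi_\Omega$ in \eqref{Ricci equ}; then $e_{I,(n+1)}$ and $\Gamma^K_{IJ,(n+1)}$ follow by integrating \eqref{transport e} and \eqref{transport Ch}.

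\textbf{Step 2 (a priori estimates).} The backbone will be a high-order energy estimate for the wave equation for $\Vert V\Vert^2$: commuting with $\partial_t^k\partial^p$ and running the standard energy identity produces the interior energy together with a boundary term on $\partial\Omega_0^T$ proportional, up to lower order, to $\int a\,|\partial_t^k\partial^p\nabla_n\Vert V\Vert^2|^2$, where $a=\sqrt{\nabla_\mu\Vert V\Vert^2\nabla^\mu\Vert V\Vert^2}$. The relativistic Taylor sign condition in \eqref{compatibility conditions}, which must first be shown to persist with $a\ge c_0/2$ on a short time interval, gives this boundary term the favorable sign, so it is \emph{added to} the energy and controls the Neumann trace of $\Vert V\Vert^2$. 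Full control of $\Vert V\Vert^2$ and $\Theta^I$ at the level of \eqref{regularity conditions} then follows by combining slice-wise elliptic estimates — the wave equation read as a second-order elliptic equation in the spatial variables — with the transport equations. In the geometric sector, $L^2$-energy estimates for the Bianchi system, performed separately in $\Omega_t$ and in $\Omega_t^c$ and matched across $\partial\Omega_0^T$ (consistently with the curvature being only $L^2$ globally), bound $\partial_t^k\partial^p R_{IJKL}$; integrating \eqref{transport e}--\eqref{transport Ch} upgrades these to bounds on $e_I-\partial_I$ and $\Gamma_{IJ}^K$. A Gronwall argument should close all of these uniformly on some $[0,T]$ depending only on the size of the data.

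\textbf{Step 3 (limit, uniqueness, constraints) and the main obstacle.} Differences of consecutive iterates satisfy the same linear systems with right-hand sides lower order in the difference, so the iteration should contract in a norm one derivative below \eqref{regularity conditions}; interpolation against the uniform bounds yields a limit solving \eqref{frame main equ}--\eqref{transport Ch} with the asserted regularity, and the same estimate gives uniqueness. To reach the Einstein equations and the conservation laws I would then run propagation of constraints: every quantity in \eqref{vanishing fluid quantities} and \eqref{vanishing geometric quantities} — the continuity and Euler residuals and their differentiated forms, the torsion $\Gamma_{JI}^K e_K-\Gamma_{IJ}^K e_K+[e_I,e_J]$, the algebraic and differential Bianchi expressions, $R_{IJKL}-R_{KLIJ}$, the formula for $R^{K}{}_{MIJ}$ through $\Gamma$, the vorticity relation, and the Einstein residual $R_{IJ}-\chi_\Omega(\cdots)$ — satisfies a homogeneous linear transport or wave equation with coefficients built from the constructed solution and hence vanishes identically since its data vanish; this forces $g^{\mu\nu}=-e_0^\mu e_0^\nu+\sum_{\tilde I}e_{\tilde I}^\mu e_{\tilde I}^\nu$ to be a genuine Lorentzian metric with $\{e_I\}$ orthonormal and $R_{IJKL}$ to be its Riemann tensor, so \eqref{Ricci equ} — that is, \eqref{einstein equ} — holds, and tracing $V=\Theta^K e_K$ back through \eqref{main equ} (with \eqref{orthu} supplying \eqref{masscons equ}) recovers the conservation laws. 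I expect the principal obstacle to be exactly this coupling at the free boundary: the wave operator controlling $\Vert V\Vert^2$ has coefficients whose regularity is only available through the curvature via \eqref{transport e}--\eqref{transport Ch}, while $\partial\Omega_0^T$ is simultaneously the locus where the Taylor sign condition must be monitored and an interface across which the curvature jumps, so the boundary energy estimate and the fluid-geometry bootstrap have to be closed together, without any loss of derivatives when the wave equation is commuted with the normal derivative.
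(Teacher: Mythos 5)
Your overall architecture (iteration on a decoupled fluid/geometry system, energy plus slice-wise elliptic estimates, a hyperbolic Bianchi system for the curvature, propagation of constraints at the end) matches the paper's, but the way you set up the fluid estimates contains a genuine gap, and you do not engage with the two difficulties that are actually new here relative to \cite{Miao1}.

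The gap: you make the Dirichlet wave equation for $\Vert V\Vert^{2}$ the sole backbone and propose to recover $\Theta^{I}$ by time-integrating the first equation of \eqref{frame main equ}. This cannot close at top order. The regularity \eqref{regularity conditions} puts $\partial^{p}\partial_{t}^{k}\Theta^{I}$ and $\partial^{p}\partial_{t}^{k+1}\Vert V\Vert^{2}$ at the \emph{same} level $2p+k\le K+2$, while $\partial_{t}\Theta^{I}\sim D_{I}\Vert V\Vert^{2}$ places $\Theta$ one spatial derivative below $\Vert V\Vert^{2}$; and the purely spatial top-order control of $\Vert V\Vert^{2}$ that your integration would require is itself only obtained from $\Theta$ (via $\nabla\Vert V\Vert^{2}=-2\nabla_{V}V$ or elliptic estimates), so the argument is circular. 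The paper breaks the circle with an independent wave--boundary system for $\Theta^{I}$: the interior equation \eqref{2V interior frame equ} together with the boundary evolution equation $(\partial_{t}^{2}+\gamma D_{n})\Theta^{I}=f$ of \eqref{2V boundary equ}, the relativistic analogue of \eqref{water waves}. It is \emph{there} that the Taylor sign condition of \eqref{compatibility conditions} enters, through the coercive boundary energy $\int\gamma^{-1}|\partial_{t}^{k+1}\Theta|^{2}$ of Lemma \ref{V energy lemma}, and this boundary energy is exactly what supplies the Neumann data in the elliptic recovery of $\partial^{p+1}\partial_{t}^{k}\Theta$ (Step 2 of the proof of Proposition \ref{elliptic prop}). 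Your attribution of the Taylor sign to a boundary term $\int a\,|\nabla_{n}\Vert V\Vert^{2}|^{2}$ is off: for the Dirichlet problem the favorable boundary sign comes from the choice of multiplier $Q=\partial_{t}-\alpha n$ in Lemma \ref{V^2 energy lemma}, not from $a\ge c_{0}$. Also, commuting the wave equations with arbitrary $\partial^{p}$ destroys the boundary conditions; the paper commutes only with $\partial_{t}^{k}$ and restores spatial derivatives by elliptic estimates.

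Two further omissions are not cosmetic. For a general equation of state the commuted equation \eqref{energy equ2'} contains $\frac{2G'\Vert V\Vert^{2}}{G}\partial_{t}^{k+3}\Vert V\Vert^{2}$, a principal-order term that cannot be dumped into the source; it must be absorbed into a modified wave operator using $G'\ge 0$ before any energy estimate closes. For nonzero vorticity, the source $\partial_{t}^{k}\bigl(\Theta^{J}\nabla_{V}\nabla_{K}\omega_{KJ}\bigr)$ naively produces $\nabla^{(2)}\partial_{t}^{k}\Theta$, one derivative too many; the identity \eqref{main skill for higher order DV2}, which trades it for $\Theta^{J}\Box\Theta^{L}\omega_{LJ}$ plus controlled terms, is indispensable. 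Finally, your plan to estimate the curvature separately in $\Omega_{t}$ and $\Omega_{t}^{c}$ and ``match across the interface'' is underspecified, since the curvature jumps there; the paper avoids matching altogether by contracting with vector fields $X_{A},X_{B}$ tangent to $\partial\Omega$, so that the Maxwell system \eqref{Maxwell equ curl} holds weakly on all of $\Sigma$.
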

\subsection{History and related works}
    One of the earliest works on the existence of solutions to Einstein's equations is \cite{Bruhat}. Over the past few decades, well-posedness results have been established for the Einstein equations coupled with various matter fields, see \cite{Friedrich,Fournodavlos,Luk,Speck,Bieri,Kreiss,Mihalis}. Significant progress has also been made in local theory, particularly in the context of isolated bodies with free boundaries. In the non-relativistic setting, the well-posedness of free boundary fluid models is a subtle issue, requiring a variety of approaches to establish the local existence and uniqueness of solutions. In the irrotational case, which is closely related to the water wave problem describing the motion of the ocean's surface under Earth's gravity, early foundational contributions were made by Wu \cite{Wu1, Wu2}. Let $\tilde{u}$ denote the velocity and $\tilde{p}$ the pressure, with $B_{t}$ representing the fluid domain at time $t$ and $\partial B_{t}$ its boundary. Let $\tilde{n}$ be the unit outward normal to $\partial B_{t}$. Their work demonstrated that, by applying the material derivative $D_{t}:=\partial_{t}+\tilde{u}\cdot\nabla$ of the Euler equations, one can derive the following quasilinear system of equations
    \begin{equation}\label{water waves}
       \begin{cases}
           \Delta \tilde{u}=0 &\quad \text{in} \ B_{t} \\
           \left(D_{t}^{2}+\tilde{a}\nabla_{\tilde{n}}\right)\tilde{u}=-\nabla D_{t}\tilde{p} &\quad \text{on} \ \partial B_{t}  
       \end{cases},
    \end{equation} 
    where $\tilde{a}:=-\frac{\partial \tilde{p}}{\partial \tilde{n}}$ is positive on the boundary, and $\nabla_{\tilde{n}}$ represents the Dirichlet-Neumann map. In the context of general relativity, the well-posedness of a fluid free-boundary problem was first established, to our knowledge, in \cite{Miao1}. For results on a relativistic fluid free-boundary problem in a fixed spacetime, see \cite{Miao2,Oliynyk1,Oliynyk2,Oliynyk3,GL,Zeming}.  
    
    In the case of non-vanishing vorticity,  the local well-posedness of the free boundary problem for an incompressible ideal fluid in two space dimensions was established in \cite{Iguchi}. In \cite{Christodoulou2} the following system
    \begin{equation}\label{nonvanishing vorticity}
       \begin{cases}
           \Delta \tilde{p}=-\nabla_{i}\tilde{u}^{\ell}\nabla_{\ell}\tilde{u}^{i}&\quad \text{in}\ B_{t},\quad \tilde{p}=0\quad\text{on} \ \partial B_{t},\\
           \Delta D_{t}\tilde{p}=-(\partial_{k}\tilde{p})\Delta \tilde{u}^{k}-G(\nabla \tilde{u}, \nabla^{2}\tilde{p})&\quad \text{in}\ B_{t},\quad D_{t}\tilde{p}=0\quad\text{on}\  \partial B_{t},
       \end{cases}
    \end{equation}
    where $G(\nabla \tilde{u}, \nabla^{2}\tilde{p})=4\delta^{ij}\nabla_{i}\tilde{u}^{k}\nabla_{j}\nabla_{k}\tilde{p}+2(\nabla_{i}\tilde{u}^{j})(\nabla_{j}\tilde{u}^{k})\nabla_{k}\tilde{u}^{i}$ was considered. The authors in \cite{Christodoulou2} employed elliptic regularity theory to facilitate a prior estimates. The local existence of smooth solutions to this problem was proved in \cite{Lindblad}. Moreover, the results in \cite{Wu2} was extended to the case where $\mathrm{curl}\tilde{u}\ne 0$ in \cite{Ping}. Motivated by the aforementioned work \cite{Wu1,Wu2,Christodoulou2,Ping}, the main purpose of this paper is to extend the well-posedness result in \cite{Miao1} by removing the irrotational assumption and considering more general equations of state. The following section will provide a detailed overview of the ideas under discussion.

\subsection{Main ideas for the proof}	
	\subsubsection{Equations in the frame}\label{subsec1.3.1}
	We begin by recalling the transport equations $(\ref{transport e})$ and $(\ref{transport Ch})$, which govern the evolution of the connection coefficients $\Gamma_{IJ}^{K}$ and the components of the frame elements $e_{I}^{\mu}$ in the Lagrangian coordinates $\left\{x^{\mu}\right\}$. These equations are also coupled to the Bianchi equations for the curvature and the wave-boundary system for the fluid components $\Theta^{I}$.
	
	Next, we turn our attention to the fluid equations. We first derive the equations for $\Theta^{I}$ and $\Vert V\Vert^2$. By expressing the velocity $V$ in terms of its frame components $\Theta^{I}$, rather than using  arbitrary coordinate components $V^{\nu}$, the boundary equation for the fluid velocity involves only the connection coefficients $\Gamma$, without the appearance of their derivatives. We start by taking the covariant derivative $\nabla_{V}$ of the equation $(\ref*{alongu})$ to get
	\begin{equation}\label{2Vequ}
	    \nabla_{V}^{2}V^{\nu}-\frac{1}{2}\nabla^{\lambda}\Vert V \Vert^2 \nabla_{\lambda}V^{\nu}+\frac{1}{2}\nabla^{\nu}D_{V}\Vert V\Vert^{2}-\frac{1}{2}g^{\mu\nu}\nabla^{\lambda}\Vert V\Vert^{2}\omega_{\mu\lambda}=0,    
	\end{equation}
	where $\omega$ denotes the vorticity of the fluid, defined by
	\begin{equation*}
	        \omega=\mathrm{d}V.
	\end{equation*}
	Note that equation $(\ref{alongu})$ implies that $\omega$ satisfies the transport equation
	\begin{equation*}
	\mathcal{L}_{V}\omega=0.
	\end{equation*}
	Here $\mathcal{L}_{V}\omega$ denotes the Lie derivative of $g$ with respect to $V$. In coordinates, this can be rewritten as 
	\begin{equation}\label{transport w}
	    \nabla_{V}\omega_{\mu\nu}+\left(\nabla_{\mu}V^{\lambda}\right)\omega_{\lambda\nu}+\left(\nabla_{\nu}V^{\lambda}\right)\omega_{\mu\lambda}=0.
	\end{equation}
	As we will see, equation $(\ref{2Vequ})$ on $\partial\Omega$ satisfies
	\begin{equation*}
	    \nabla_{V}^{2}V^{\nu}+\frac{a}{2}\nabla_{n}V^{\nu}+\frac{1}{2}\nabla^{\nu}D_{V}\Vert V\Vert^{2}-\frac{1}{2}g^{\mu\nu}\nabla^{\lambda}\Vert V\Vert^{2}\omega_{\mu\lambda}=0.  
	\end{equation*}
    Taking the inner product with $e_{I}$ leads to
    \begin{equation*}
        \left(D_{V}^{2}+\frac{a}{2}D_{n}\right)\Theta^{I}=-\frac{\epsilon_{I}}{2}D_{I}D_{V}\Vert V\Vert^{2}+\frac{\epsilon_{K}}{2}D_{K}\Vert V\Vert^{2}\Gamma_{KJ}^{I}\Theta^{J}+\frac{\epsilon_{I}\epsilon_{J}}{2}\omega_{IJ}D_{J}\Vert V\Vert^{2}.
    \end{equation*}
    In the frame, $a$ takes the form 
	\begin{equation*}
	    a=\sqrt{\sum_{I}\epsilon_{I}\left(D_{I}\Vert V\Vert^{2}\right)^2}
	\end{equation*}
	and $\omega_{IJ}=\nabla_{I}\Theta_{J}-\nabla_{J}\Theta_{I}$. Using the fact $\partial_{t}=\hat{\Theta}^{0}D_{0}+\hat{\Theta}^{\tilde{I}}D_{\tilde{I}}$ and introducing the notation 
	\begin{equation*}
	     \gamma:=\frac{a}{2\Vert V\Vert^{2}}=\frac{\sqrt{\sum_{I}\epsilon_{I}\left(D_{I}\Vert V\Vert^{2}\right)^2}}{2\Vert V\Vert^{2}},
	\end{equation*}
	we arrive at
	\begin{equation}\label{2V boundary equ}
	   \begin{aligned}
	      \left(\partial_{t}^{2}+\gamma D_{n}\right)\Theta^{I}=&-\frac{\epsilon_{I}}{2\Vert V\Vert^{2}}D_{I}D_{V}\Vert V\Vert^{2}+\frac{\epsilon_{K}}{2\Vert V\Vert^{2}}D_{K}\Vert V\Vert^{2}\Gamma_{KJ}^{I}\Theta^{J}+\frac{\epsilon_{I}\epsilon_{J}}{2\Vert V\Vert^{2}}\omega_{IJ}D_{J}\Vert V\Vert^{2}\\
	      &-\frac{1}{2\Vert V\Vert^{2}}\left(\partial_{t}\Vert V\Vert^{2}\right)\partial_{t}\Theta^{I}.
	   \end{aligned}	   
	\end{equation}
	We next turn to the interior wave equation for $\Theta^{I}$. Using the definition of $\omega$, we get
	\begin{equation}\label{def w}
	    \nabla^{\mu}V_{\nu}-\nabla_{\nu}V^{\mu}=g^{\lambda\mu}\omega_{\lambda\nu}.
	\end{equation}
	Taking $\nabla_{\nu}$ to equation $(\ref{orthu})$ yields
	\begin{equation}\label{2orthu}
	    \nabla_{\nu}D_{V}G+\nabla_{\nu}\nabla_{\mu}V^{\mu}G+\nabla_{\mu}V^{\mu}\nabla_{\nu}G=0.
	\end{equation}
	Here 
	\begin{equation}\label{G equ}
	    \nabla_{\mu}G=G^{'}\nabla_{\mu}\Vert V\Vert^{2}, 
	\end{equation}
    with 
    \begin{equation*}
         G^{'}=\frac{1}{2}\left(\frac{1}{\eta^{2}}-1\right)\frac{G}{\Vert V\Vert^{2}}\ge 0.
    \end{equation*}
    Applying $\nabla_{\mu}$ to equation $(\ref{def w})$, commuting $\nabla_{\mu}$ and $\nabla_{\nu}$, and using equation $(\ref{2orthu})$, leads to
	\begin{equation}\label{2V interior equ}
	   \nabla_{\mu}\nabla^{\mu}V_{\nu}=R_{\sigma\nu}V^{\sigma}+\nabla^{\lambda}\omega_{\lambda\nu}-\frac{1}{G}\left(\nabla_{\nu}D_{V}G+\nabla_{\mu}V^{\mu}\nabla_{\nu}G\right),
	\end{equation}
	where in view of $(\ref{einstein equ})$
	\begin{equation}\label{Ricc}
	     R_{\sigma\nu}=\begin{cases}
	          GV_{\sigma}V_{\nu}-pg_{\sigma\nu}+\frac{1}{2}G\Vert V\Vert^{2}g_{\sigma\nu} \quad&\text{in}\ \Omega\\
	          0\quad&\text{in}\ \Omega^{c}
	     \end{cases}.
	\end{equation}
	Expressing the equation \ref{2V interior equ} in the frame yields
	\begin{equation}\label{2V interior frame equ}
	   \begin{aligned}
	       \Box\Theta^{I}&=-\epsilon_{K}\left(2\Gamma_{KN}^{I}D_{K}\Theta^{N}+\Gamma_{KN}^{I}\Gamma_{KM}^{N}\Theta^{M}+\Theta^{N}D_{K}\Gamma_{KN}^{I}-\Theta^{N}\Gamma_{KK}^{M}\Gamma_{MN}^{I}\right)\\
	       &+\epsilon_{I}R_{KI}\Theta^{K}+\epsilon_{I}\epsilon_{K}\nabla_{K}\omega_{KI}-\frac{\epsilon_{I}}{G}\left(D_{I}D_{V}G+D_{I}G\nabla_{J}\Theta^{J}\right).
	   \end{aligned}	   
	\end{equation}
	
    We are now in a position to derive wave eqautions for $\sigma^{2}$ and $D_{V}\sigma^{2}$ with Dirichlet boundary conditions. A direct computation using $\Vert V\Vert^{2}=-V_{\mu}V^{\mu}$ shows that
    \begin{equation*}
        \Box\Vert V\Vert^{2}=-2\left(\nabla_{\nu}V_{\mu}\right)\left(\nabla^{\nu}V^{\mu}\right)-2V^{\mu}\nabla_{\nu}\nabla^{\nu}V_{\mu},\quad \Vert V\Vert^{2}|_{\partial\Omega}\equiv 1.
    \end{equation*}
    Using equation $(\ref{2V interior equ})$, we have
    \begin{equation*}
       \Box\Vert V\Vert^{2}=-2\left(\nabla_{\nu}V_{\mu}\right)\left(\nabla^{\nu}V^{\mu}\right)-2V^{\mu}\left[R_{\sigma\mu}V^{\sigma}+\nabla^{\lambda}\omega_{\lambda\mu}-\frac{1}{G}\left(\nabla_{\mu}D_{V}G+\nabla_{\nu}V^{\nu}\nabla_{\mu}G\right)\right].
    \end{equation*}
	In terms of the frame, this becomes
	\begin{equation}\label{2V^2 frame equ}
	    \Box\Vert V\Vert^{2}=-2\epsilon_{I}\epsilon_{J}\left(D_{I}\Theta^{J}+\Gamma_{IK}^{J}\Theta^{K}\right)^{2}-2R_{KI}\Theta^{K}\Theta^{I}-2\epsilon_{K}\nabla_{K}\omega_{KI}\Theta^{I}+\frac{2}{G}\left(D_{V}^{2}G+\nabla_{J}\Theta^{J}D_{V}G\right).
	\end{equation}
	For $D_{V}\Vert V\Vert^{2}$, using equation $(\ref{alongu})$ and commuting the original equation for $\Vert V\Vert^{2}$ with $D_{V}$, we obtain
	\begin{equation*}
	   \begin{aligned}
	       \Box D_{V}\Vert V\Vert^{2}&=4\left(\nabla^{m}V^{a}\right)\left(\nabla_{m}V^{\lambda}\right)\left(\nabla_{\lambda}V_{a}\right)-4V^{m}\left(\nabla^{a}V^{\lambda}\right)R_{\sigma a\lambda m}V^{\sigma}+4\left(\nabla^{a}V^{m}\right)\left(\nabla_{a}\nabla_{m}\Vert V\Vert^{2}\right)\\
	       &+\left[R_{\sigma m}V^{\sigma}+\nabla^{\lambda}\omega_{\lambda m}-\frac{1}{G}\left(\nabla_{m}D_{V}G+\nabla_{\mu}V^{\mu}\nabla_{m}G\right)\right]\nabla^{m}\Vert V\Vert^{2}+V^{m}\nabla^{\lambda}\Vert V\Vert^{2}R_{m\lambda}\\
	       &-2\nabla_{V}\left(R_{\sigma\mu}V^{\mu}V^{\sigma}+V^{\mu}\nabla^{\lambda}\omega_{\lambda\mu}-\frac{1}{G}D_{V}^{2}G-\frac{1}{G}\nabla_{\nu}V^{\nu}D_{V}G\right) 
	    \end{aligned}
	\end{equation*}
	and $D_{V}\Vert V\Vert^{2}|_{\partial\Omega}\equiv 0$. Using the fact that $\partial_{t}\Vert V\Vert^{2}=\frac{1}{\Vert V\Vert}D_{V}\Vert V\Vert^{2}$, we get
	\begin{equation}\label{2DV^2 frame equ}
	   \begin{aligned}
	       \square\partial_{t}\Vert V\Vert^{2}&=\frac{2}{G\Vert V\Vert}D_{V}^{3}G-\frac{2\epsilon_{K}}{\Vert V\Vert}\Theta^{J}\nabla_{V}\nabla_{K}\omega_{KJ}-\frac{2}{G^{2}\Vert V\Vert}D_{V}GD_{V}^{2}G-\frac{2\epsilon_{K}}{\Vert V\Vert}\nabla_{V}\Theta^{J}\nabla_{K}\omega_{KJ}\\
	       &-\frac{2}{\Vert V\Vert}\nabla_{V}\left(R_{IJ}\Theta^{I}\Theta^{J}-\frac{1}{G}\nabla_{J}\Theta^{J}D_{V}G\right)+\frac{1}{\Vert V\Vert}\Theta^{I}R_{JI}\nabla^{J}\Vert V\Vert^{2}\\
	       &+\frac{\epsilon_{I}}{\Vert V\Vert}\left[R_{JI}\Theta^{J}+\epsilon_{J}\nabla_{J}\omega_{JI}-\frac{1}{G}\left(D_{I}D_{V}G+\nabla_{J}\Theta^{J}D_{I}G\right)\right]D_{I}\Vert V\Vert^{2}\\
	       &+\frac{4\epsilon_{I}}{\Vert V\Vert}\left(D_{I}\Theta^{J}+\Gamma_{IK}^{J}\Theta^{K}\right)\left(D_{I}D_{J}\Vert V\Vert^{2}-\Gamma_{IJ}^{K}D_{K}\Vert V\Vert^{2}\right)-\frac{4\epsilon_{I}}{\Vert V\Vert}R_{LIJK}\left(D_{I}\Theta^{J}+\Gamma_{IM}^{J}\Theta^{M}\right)\Theta^{K}\Theta^{L}\\
	       &+\frac{4\epsilon_{I}\epsilon_{K}}{\Vert V\Vert}\left(D_{I}\Theta^{J}+\Gamma_{IM}^{J}\Theta^{M}\right)\left(D_{J}\Theta^{K}+\Gamma_{JM}^{K}\Theta^{M}\right)\left(D_{I}\Theta^{K}+\Gamma_{IP}^{K}\Theta^{P}\right)\\     
	       &+\frac{\epsilon_{I}\partial_{t}\Vert V\Vert^{2}}{4\Vert V\Vert^{4}}D_{I}\Vert V\Vert^{2}D_{I}\Vert V\Vert^{2}-\frac{\epsilon_{I}}{\Vert V\Vert^{2}}D_{I}\Vert V\Vert^{2}D_{I}\partial_{t}\Vert V\Vert^{2}-\frac{1}{2\Vert V\Vert^{2}}\Box\Vert V\Vert^{2}\partial_{t}\Vert V\Vert^{2}.	          
	   \end{aligned}
	\end{equation}
    Equations $(\ref{2V boundary equ})$, $(\ref{2V interior frame equ})$, $(\ref{2V^2 frame equ})$ and $(\ref{2DV^2 frame equ})$ are the fluid equations in the frame.	
	
	We now proceed to derive the equations satisfied by the curvature components. It is important to note that the curvature is discontinuous at the boundary. To estimate the curvature, we must derive a first-order hyperbolic system by combining the contracted Bianchi identity
	\begin{equation}\label{the contracted Bianchi identity}
	   \nabla_{Z}R_{WJ}-\nabla_{W}R_{ZJ}=\sum_{I}\epsilon_{I}\nabla_{I}R_{IJZW}
	\end{equation} 
	with the differential Bianchi equation 
	\begin{equation}\label{the differential Bianchi equation}
	     \nabla_{[I}R_{JK]ZW}=0.
	\end{equation}
	If $Z$ and $W$ are tangential to the fluid boundary $\partial\Omega$, equation $(\ref{the contracted Bianchi identity})$ and $(\ref{the differential Bianchi equation})$ can be viewed as a weak equation on 
	\begin{equation}\label{whole domain}
	      \Sigma:=\Omega\cup\Omega^{c}.
	\end{equation}
    The two form $F^{AB}$, with components in the frame $\left\{e_{I}\right\}$ are given by
    \begin{equation*}
         F_{IJ}^{AB}=R(e_{I},e_{J},X_{A},X_{B}),
    \end{equation*}
    where $X_{A}$ and $X_{B}$ (with $A\ne B$) are tangential to the boundary. Here, $F_{IJ}\equiv F_{IJ}^{AB}$ of $F$ can be decomposed into the electric parts $E_{\tilde{I}}\equiv E_{\tilde{I}}^{AB}$ and the magnetic parts $H^{\tilde{I}}\equiv H_{AB}^{\tilde{I}}$, where $\tilde{I}=1, 2, 3$, as defined by
    \begin{equation*}
          E_{\tilde{I}}=F_{\tilde{I}0},\quad H^{1}=-F_{23}, \quad H^{2}=-F_{31},\quad H^{3}=-F_{12}.
    \end{equation*}
    We use the Bianchi equations $(\ref{the contracted Bianchi identity})$ and $(\ref{the differential Bianchi equation})$ to derive a Maxwell system for $(E, H)$ in the usual way. Contracting identity $\nabla^{\mu}R_{\mu\nu\lambda\kappa}=\nabla_{\lambda}R_{\kappa\nu}-\nabla_{\kappa}R_{\lambda\nu}$ with $X_{A}^{\lambda}$ and $X_{B}^{\kappa}$, we obtain in $\Omega$
    \begin{equation}\label{contract B equ}
       \begin{aligned}
          \nabla^{\mu}F_{\mu\nu}^{AB}&=
          X_{A}^{\lambda}X_{B}^{\kappa}\left[G^{'}\nabla_{\lambda}\Vert V\Vert^{2}\left(V_{\kappa}V_{\nu}+\frac{1}{2}\Vert V\Vert^{2}g_{\kappa\nu}\right)+G\nabla_{\lambda}\left(V_{\kappa}V_{\nu}\right)+\frac{1}{2}G\nabla_{\lambda}\Vert V\Vert^{2}g_{\kappa\nu}-\nabla_{\lambda}pg_{\kappa\nu}\right]\\
          &-X_{A}^{\lambda}X_{B}^{\kappa}\left[G^{'}\nabla_{\kappa}\Vert V\Vert^{2}\left(V_{\lambda}V_{\nu}+\frac{1}{2}\Vert V\Vert^{2}g_{\lambda\nu}\right)+G\nabla_{\kappa}\left(V_{\lambda}V_{\nu}\right)+\frac{1}{2}G\nabla_{\kappa}\Vert V\Vert^{2}g_{\lambda\nu}-\nabla_{\kappa}pg_{\lambda\nu}\right]\\
          &+R_{\mu\nu\lambda\kappa}\left(X_{B}^{\kappa}\nabla^{\mu}X_{A}^{\lambda}+X_{A}^{\lambda}\nabla^{\mu}X_{B}^{\kappa}\right),
       \end{aligned}  
    \end{equation}
    and in $\Omega^{c}$
    \begin{equation*}
       \nabla^{\mu}F_{\mu\nu}^{AB}=R_{\mu\nu\lambda\kappa}\left(X_{B}^{\kappa}\nabla^{\mu}X_{A}^{\lambda}+X_{A}^{\lambda}\nabla^{\mu}X_{B}^{\kappa}\right).
    \end{equation*}
    In the frame, this takes the form 
    \begin{equation}\label{I frame equ}
        \epsilon_{I}D_{I}F_{IK}^{AB}=\mathcal{I}_{K}^{AB}
    \end{equation}
	where
	\begin{equation}\label{I component1}
	   \begin{aligned}
	       \mathcal{I}_{K}^{AB}&=G^{'}\epsilon_{I}\epsilon_{K}\nabla_{X_{A}}\Vert V\Vert^{2}X_{B}^{I}\Theta^{I}\Theta^{K}+\frac{1}{2}G^{'}\nabla_{X_{A}}\Vert V\Vert^{2}\Vert V\Vert^{2}X_{B}^{K}\epsilon_{K}+\frac{1}{2}G\epsilon_{K}X_{B}^{K}\nabla_{X_{A}}\Vert V\Vert^{2}+G\epsilon_{K}X_{A}^{I}X_{B}^{J}\omega_{IJ}\Theta^{K}\\
 	       &+G\epsilon_{I}\epsilon_{K}\left(X_{B}^{I}X_{A}^{J}-X_{A}^{I}X_{B}^{J}\right)\Theta^{I}\left(D_{J}\Theta^{K}+\Gamma_{JM}^{K}\Theta^{M}\right)+\epsilon_{K}X_{B}^{K}\nabla_{X_{A}}p-G^{'}\epsilon_{I}\epsilon_{K}\nabla_{X_{B}}\Vert V\Vert^{2}X_{A}^{I}\Theta^{I}\Theta^{K}\\
	       &-\frac{1}{2}G^{'}\nabla_{X_{B}}\Vert V\Vert^{2}\Vert V\Vert^{2}X_{A}^{K}\epsilon_{K}-\frac{1}{2}G\epsilon_{K}X_{A}^{K}\nabla_{X_{B}}\Vert V\Vert^{2}-\epsilon_{K}X_{A}^{K}\nabla_{X_{B}}p+\epsilon_{I}\Gamma_{II}^{J}F_{JK}^{AB}+\epsilon_{I}\Gamma_{IK}^{J}F_{IJ}^{AB}\\
	       &+\epsilon_{J}R_{JKLI}\left(X_{B}^{I}D_{J}X_{A}^{L}+X_{A}^{L}D_{J}X_{B}^{I}+X_{B}^{I}\Gamma_{JM}^{L}X_{A}^{M}+X_{A}^{L}\Gamma_{JM}^{I}X_{B}^{M}\right), \quad \text{in}\ \Omega,
	   \end{aligned}
	\end{equation}
	and 
	\begin{equation}\label{I component2}
	       \mathcal{I}_{K}^{AB}=\epsilon_{I}\Gamma_{II}^{J}F_{JK}^{AB}+\epsilon_{I}\Gamma_{IK}^{J}F_{IJ}^{AB}+\epsilon_{J}R_{JKLI}\left(X_{B}^{I}D_{J}X_{A}^{L}+X_{A}^{L}D_{J}X_{B}^{I}+X_{B}^{I}\Gamma_{JM}^{L}X_{A}^{M}+X_{A}^{L}\Gamma_{JM}^{I}X_{B}^{M}\right), \ \text{in}\ \Omega^{c}.
	\end{equation}
	On the other hand, contracting $\nabla_{[\alpha}R_{\beta\lambda]\mu\nu}$ with $X_{A}^{\mu}$ and $X_{B}^{\nu}$ we get
	\begin{equation}\label{B equ}
	      \nabla_{[\alpha}F_{\beta\lambda]}^{AB}=X_{B}^{\nu}\left(\nabla_{[\alpha}X_{A}^{\mu}\right)R_{\beta\lambda]\mu\nu}+X_{A}^{\mu}\left(\nabla_{[\alpha}X_{B}^{\nu}\right)R_{\beta\lambda]\mu\nu}.
	\end{equation}
	In terms of the frame components we get
	\begin{equation}\label{J frame equ}
	     D_{[I}F_{JK]}^{AB}=\mathcal{J}_{IJK}^{AB}
	\end{equation}
	where
	\begin{equation}\label{J component}
	   \begin{aligned}
	       \mathcal{J}_{IJK}^{AB}&=2\Gamma_{[IK}^{L}F_{J]L}^{AB}+X_{B}^{L}R_{ML[JK}D_{I]}X_{A}^{M}+X_{B}^{L}R_{ML[JK}\Gamma_{I]N}^{M}X_{A}^{N}\\
	       &+X_{A}^{M}R_{ML[JK}D_{I]}X_{B}^{L}+X_{A}^{M}R_{ML[JK}\Gamma_{I]N}^{L}X_{B}^{N}.
	   \end{aligned}	    
	\end{equation}
	Equations $(\ref{I frame equ})$ and $(\ref{J frame equ})$ can be written in terms of $E$ and $H$ as 
	\begin{equation}\label{Maxwell equ curl}
	       D_{0}E+\mathrm{curl}H=\mathcal{I}, \quad D_{0}H-\mathrm{curl}E=\mathcal{J}^{\ast},   
	\end{equation}
	and
	\begin{equation}\label{Maxwell equ div}
	     \epsilon_{\tilde{I}}D_{\tilde{I}}E_{\tilde{I}}=\mathcal{I}_{0}^{AB}, \quad  -D_{\tilde{I}}H^{\tilde{I}}=\left(\mathcal{J}^{\ast}\right)^{0},
	\end{equation}
	where $\left(\mathcal{J}^{\ast}\right)^{I}=\epsilon^{IJKL}\mathcal{J}_{JKL}$ and $\mathrm{curl}$ denotes the usual $\mathrm{curl}$ operator with respect to $D_{\tilde{I}}, \tilde{I}=1, 2, 3$:
	\begin{equation*}
	    \mathrm{curl} H=\begin{pmatrix}
	      D_{2}H^{3}-D_{3}H^{2}\\
	      D_{1}H^{3}-D_{3}H^{1}\\
	      D_{1}H^{2}-D_{2}H^{1}	    
	    \end{pmatrix}.
	\end{equation*}
	Defining $W\equiv W^{AB}=\left(W_{1}, W_{2}, \dots, W_{6}\right)$ where $W_{m} = H^{m}$ for $m = 1, 2, 3$ and $W_{m} = E_{m-3}$ for $m = 4, 5, 6,$ and with
	\begin{equation*}
	   \mathcal{A}^{1}=\begin{pmatrix}
	       0&0&0&0&0&0\\
	       0&0&0&0&0&1\\
	       0&0&0&0&-1&0\\
	       0&0&0&0&0&0\\
	       0&0&-1&0&0&0\\
	       0&1&0&0&0&0\\
	   \end{pmatrix},
	   \mathcal{A}^{2}=\begin{pmatrix}
	   0&0&0&0&0&-1\\
	   0&0&0&0&0&0\\
	   0&0&0&1&0&0\\
	   0&0&1&0&0&0\\
	   0&0&0&0&0&0\\
	   -1&0&0&0&0&0\\
	   \end{pmatrix},
	   \mathcal{A}^{3}=\begin{pmatrix}
	   0&0&0&0&1&0\\
	   0&0&0&-1&0&0\\
	   0&0&0&0&0&0\\
	   0&-1&0&0&0&0\\
	   1&0&0&0&0&0\\
	   0&0&0&0&0&0\\
	   \end{pmatrix},
	\end{equation*}
	equation $(\ref{Maxwell equ curl})$ becomes a first order symmetric hyperbolic system for $W$ of the form
	\begin{equation}\label{1st hyperbolic system}
	    D_{0}W+\mathcal{A}^{\tilde{I}}D_{\tilde{I}}W=\mathcal{K}, \quad \mathcal{K}=\left(\mathcal{J}_{1}^{\ast}, \mathcal{J}_{2}^{\ast}, \mathcal{J}_{3}^{\ast}, \mathcal{I}_{1}, \mathcal{I}_{2}, \mathcal{I}_{3}\right).
	\end{equation}
    Since $e_{0}$ does not coincide with $\partial_{t}$, and is not adapted to the foliation by constant $t$ slices, we further decompose this as
    \begin{equation}\label{hyperbolic system}
       \mathcal{B}^{\mu}\partial_{\mu}W=\mathcal{K}, \quad \mathcal{B}^{0}:=e_{0}^{0}+e_{\tilde{I}}^{0}\mathcal{A}^{\tilde{I}}, \quad \mathcal{B}^{j}:=e_{0}^{j}+e_{\tilde{I}}^{j}\mathcal{A}^{\tilde{I}}, j = 1, 2, 3. 
    \end{equation}
	Here, in coordinates, we have 
	\begin{equation*}
	   e_{0}^{\mu}=\frac{1}{\hat{\Theta}^{0}}\left(\delta_{0}^{\mu}-\hat{\Theta}^{\tilde{I}}e_{\tilde{I}}^{\mu}\right).
	\end{equation*}
    Note that the first-order symmetric system for $W$ is weakly defined.
	
	To summarize, equations $(\ref{transport e})$, $(\ref{transport Ch})$, $(\ref{transport w})$, $(\ref{2V boundary equ})$, $(\ref{2V interior frame equ})$, $(\ref{2V^2 frame equ})$, $(\ref{2DV^2 frame equ})$, $(\ref{I frame equ})$, $(\ref{J frame equ})$  and $(\ref{hyperbolic system})$ are the working equations that can be used for a priori estimation and iteration. Let $\mathcal{F}$ denote the source term described exactly as the right-hand side of equations $(\ref{transport e})$, $(\ref{transport Ch})$, $(\ref{transport w})$, $(\ref{2V boundary equ})$, $(\ref{2V interior frame equ})$, $(\ref{2V^2 frame equ})$, $(\ref{2DV^2 frame equ})$, $(\ref{I frame equ})$, $(\ref{J frame equ})$ and $(\ref{hyperbolic system})$. Then we record the structure of these equations as 
	\begin{equation}\label{main structure equ}
        \begin{cases}
           \left(\partial_{t}^{2}+\gamma D_{n}\right)\Theta^{I}=\mathcal{F}_{\Theta^{I},\partial\Omega} \quad &\text{on}\ \partial\Omega\\
           \Box\Theta^{I}=\mathcal{F}_{\Theta^{I},\Omega} \quad &\text{in}\ \Omega\\
           \Box\Vert V\Vert^{2}=\frac{2}{G}D_{V}^{2}G+\mathcal{F}_{\Vert V\Vert^{2}} \quad &\text{in}\ \Omega\\
           \Box D_{V}\Vert V\Vert^{2}=\frac{2}{G}D_{V}^{3}G+\mathcal{F}_{D_{V}\Vert V\Vert^{2}} \quad &\text{in}\ \Omega\\
           \Vert V\Vert^{2}=1, \quad D_{V}\Vert V\Vert^{2}=0 \quad &\text{on}\ \partial\Omega\\
           \partial_{t}e_{\tilde{I}}=\mathcal{F}_{e_{\tilde{I}}}, \quad e_{0}=\frac{1}{\hat{\Theta}^{0}}(\hat{V}-\hat{\Theta}^{\tilde{I}}e_{\tilde{I}}) \quad &\text{in}\ \Sigma\\
           \partial_{t}\Gamma_{\tilde{I}J}^{K}=\mathcal{F}_{\Gamma_{\tilde{I}J}^{K}}, \quad \Gamma_{0J}^{K}=-\frac{\hat{\Theta}^{\tilde{I}}}{\hat{\Theta}^{0}}\Gamma_{\tilde{I}J}^{K} \quad &\text{in}\ \Sigma\\
           \nabla_{V}\omega_{IJ}=\mathcal{F}_{\omega_{IJ}}\quad &\text{in}\ \Sigma\\
           \mathcal{B}^{\mu}\partial_{\mu}W^{AB}=\mathcal{F}_{W^{AB}}, \quad &\text{in}\ \Sigma
        \end{cases}.
	\end{equation}
	Here, we follow the notation from $(\ref{whole domain})$, where
	\begin{equation*}
	     \Sigma_{t}=\Omega_{t}\cup\Omega_{t}^{c}=\Sigma\cap\left\{x^{0}=t\right\}
	\end{equation*}
	and $R$ denotes the Riemann curvature tensor of the metric $g$. Now, suppose there exists a soution to the equation $(\ref{main structure equ})$. To recover the original equations $(\ref{frame main equ})$ and $(\ref{Ricci equ})$, we focus on the initial data with respect to the equations in the derivation. As stated in Theorem \ref{main thm}, this is achieved by requiring that the vanishing requirements $(\ref{vanishing fluid quantities})$ and $\ref{vanishing geometric quantities}$ hold at the intial time. Next, we will outline the key ideas involved in the study of $(\ref{main structure equ})$.
	
  \subsubsection{A priori estimates} Motivated by the approach introduced in \cite{Miao2}, we start by deriving a quasilinear system for the fluid variables, i.e. $\Theta^{I}$ and $\Vert V\Vert^{2}$, from equations (\ref{einstein equ})-(\ref{main equ}). To control the higher-order derivatives of the unknowns, we commute $\partial_{t}^{k}$ with the evolution equations. It is crucial to carefully consider the vorticity term when analysing the orders of the derivatives. Since the vorticity satisfies a transport equation, commuting time derivatives allows a reduction in the order of operations. Then the quasilinear systems for the fluid quantities satisfy
  \begin{equation}\label{energy equ1'}
       \left(\partial_{t}^{2}+\gamma D_{n}\right)\partial_{t}^{k}\Theta^{I}=f_{k} \quad \text{on} \ \partial\Omega, \quad \square \partial_{t}^{k}\Theta^{I}=H_{k}\quad \text{in}\ \Omega,
  \end{equation} 
  and
  \begin{equation}\label{energy equ2'}
     \square \partial_{t}^{k+1}\Vert V\Vert^{2}=\frac{2G^{'}\Vert V\Vert^{2}}{G }\partial_{t}^{k+3}\Vert V\Vert^{2}-\partial_{t}^{k}\left(\frac{2\epsilon_{K}}{\Vert V\Vert}\Theta^{J}\nabla_{V}\nabla_{K}\omega_{KJ}\right)+F_{k}\quad \text{in}\ \Omega,\quad \partial_{t}^{k+1}\Vert V\Vert^{2}=0\quad \text{on}\ \partial\Omega.
  \end{equation}
 Here $f_{k}$, $H_{k}$ and $F_{k}$ correspond to the terms in lemma \ref{V higher order lemma}--\ref{V^2 higher order lemma}. For the fluid velocity equations $(\ref{energy equ1'})$, we multiply these equations by $\frac{1}{\gamma}\partial_{t}^{k}\Theta\sqrt{\vert g\vert}$, $\partial_{t}^{k}\Theta\sqrt{\vert g\vert}$,  respectively, and then integrate. By analyzing the signs of the boundary terms, we obtain the energy stated in lemma \ref{V energy lemma}
    \begin{equation*}
       \int_{\Omega_{t}} \left|\partial_{t,x}\partial_{t}^{k} \Theta\right|^{2}  \mathrm{d}x+\int_{\partial\Omega_{t}}\frac{1}{\gamma}\left|\partial_{t}^{k+1}\Theta\right|^{2} \mathrm{d}S.
    \end{equation*} 
For the wave equation for $\partial_{t}^{k+1}\Vert V\Vert^{2}$ with Dirichlet boundary conditions, we choose a suitable multiplier that consists of an appropriate linear combination of $\partial_{t}$ and the normal $n$. A technical challenge arises in controlling the terms $\frac{2G^{'}\Vert V\Vert^{2}}{G }\partial_{t}^{k+3}\Vert V\Vert^{2}$ and $\partial_{t}^{k}\left(\frac{2\epsilon_{K}}{\Vert V\Vert}\Theta^{J}\nabla_{V}\nabla_{K}\omega_{KJ}\right)$. However, since $\frac{2G^{'}\Vert V\Vert^{2}}{G} \geq 0$, the term $\frac{2G^{'}\Vert V\Vert^{2}}{G }\partial_{t}^{k+3}\Vert V\Vert^{2}$ can be merged with the wave operator to form a new wave operator $\overline{\square}$, that is,
\begin{equation*}
     \square \partial_{t}^{k+1}\Vert V\Vert^{2}-\frac{2G^{'}\Vert V\Vert^{2}}{G }\partial_{t}^{k+3}\Vert V\Vert^{2}\sim \overline{\square}\partial_{t}^{k+1}\Vert V\Vert^{2}.
\end{equation*} 
When using the transport equation satisfied by $\omega$, the term $\partial_{t}^{k}\nabla\nabla_{V}\omega$ will produce the higher-order derivative term $\nabla^{(2)}\partial_{t}^{k}\Theta$, leading to a loss of control. The key observation here is 
\begin{equation*}
    \partial_{t}^{k}\left(\frac{2\epsilon_{K}}{\Vert V\Vert}\Theta^{J}\nabla_{V}\nabla_{K}\omega_{KJ}\right)\sim \partial_{t}^{k}\left(\Theta^{J}\Box\Theta^{L}\omega_{LJ}\right)+\partial_{t}^{k-1}M,
\end{equation*} 
where $M$ consists of $D\partial_{t}\Theta$, $D^{2}\Theta$, $\partial_{t}R$, $D^{2}\partial_{t}\Vert V\Vert^{2}$. Then the system (\ref{energy equ2'}) can be used to get control of 
\begin{equation*}
    \int_{\Omega_{t}} \left|\partial_{t,x}\partial_{t}^{k}\Vert V\Vert^{2}\right|^{2}\mathrm{d}x+\int_{0}^{t}\int_{\partial\Omega_{t}} \left|\partial_{t,x}\partial_{t}^{k}\Vert V\Vert^{2}\right|^{2}\mathrm{d}S\mathrm{d}\tau.
\end{equation*}
We then apply elliptic estimates to bound two derivatives of $\Vert V\Vert^{2}$ in terms of one derivative $\partial_{t}\Vert V\Vert^{2}$. For the symmetric first-order hyperbolic system governing $R_{IJKL}$, we derive the estimates based on the existing results and algebraic relations. To obtain higher-order curvature estimates, we revisit the curl-divergence system (\ref{Maxwell equ curl}) and (\ref{Maxwell equ div}). The former represents the Maxwell system, while the latter represents the divergence system. Moreover, the equations for the curvature and fluid components are coupled with the transport equations of $e_{I}$ and $\Gamma$. The  detailed proof of a priori estimates is presented in Section \ref{sec2}.  

  \subsubsection{The iteration}
 Given that the curvature equation satisfies a first-order symmetric hyperbolic system, the linear existence theory is well-established. The idea for the iteration is to iteratively define $\Theta^{(m)}$, $\Lambda^{(m)}$, $\Sigma^{(m)}$ and $\omega^{(m)}$ as solutions of
 \begin{equation*}
    \begin{aligned}
      & \begin{cases}
      \left(\partial_{t}^{2}+\gamma^{(m)}\nabla_{n^{(m)}}\right)\Theta^{I,(m+1)}=S(\Theta^{(m)},\Lambda^{(m)}) \quad &\text{on}\ [0,T]\times\partial \Omega\\
      \square_{g^{(m)}}\Theta^{I,(m+1)}=F(\Theta^{(m)},\Lambda^{(m)})\quad&\text{in}\ [0,T]\times \Omega
      \end{cases},\\
      &\begin{cases}
      \square_{g^{(m)}}\Lambda^{(m+1)}=\left(\frac{1}{\eta^{2}}-1\right)\partial_{t}^{2}\Lambda^{(m+1)}+H(\Theta^{(m)},\Sigma^{(m)}), \quad&\text{in}\quad [0,T]\times \Omega\\
      \Lambda^{(m+1)}=0, \quad &\text{on} \quad [0,T]\times \partial \Omega
      \end{cases},\\
      &\partial_{t}\Sigma^{(m+1)}=\Lambda^{(m+1)},\\
      &\nabla_{V}\omega_{IJ}^{(m+1)}+\nabla_{I}\Theta^{K,(m+1)}\omega_{KJ}^{(m+1)}+\nabla_{J}\Theta^{K,(m+1)}\omega_{IK}^{(m+1)}=0,
    \end{aligned}    
 \end{equation*}
 where $S$, $F$ and $H$ are denoted in equations (\ref{fluid quantities for DV^2 equ}) and (\ref{fluid quantities for V equ}), respectively. We then proceed to establish the existence theory for the fluid equations using the Galerkin approximation. Based on the a priori estimates derived in Section \ref{sec2}, we demonstrate the convergence of the iterative scheme. Finally, we verify that our derived solutions satisfy the original equations (\ref{frame main equ}) and (\ref{Ricci equ}). It is important to note that the arbitrary curvature $R$ defined by the solution of (\ref{main structure equ}), (\ref{vanishing fluid quantities}) and (\ref{vanishing geometric quantities}), is actually the curvature tensor of the metric. In this case, we derive homogeneous equations for the vanishing fluid and geometric quantities, as presented in equations $(\ref{equy})$, $(\ref{equtrans})$, $(\ref{equZn})$, $(\ref{equqs})$, $(\ref{equbzdr})$. Owing to their vanishing on the initial slice and the arguments detailed in Section \ref{sec2}, these quantities remain zero. In the following section, we describe how to construct initial data for the system (\ref{main structure equ}) based on the initial data for (\ref{einstein equ}), which is an important step in solving Einstein's equations. 
 
\subsection{The initial data} Given that the energy momentum tensor $T_{\mu\nu}$ satisfies (\ref{energy momentum tensor}), the initial data for (\ref{einstein equ}) consist of a symmetric positive definite two form $\overline{\gamma}$, the second fundamental form $k$ and $V_{\mu}$. We assume that the above quantities satify the constraint equation
\begin{equation*}
   \begin{aligned}
      &\overline{R}-\Vert k\Vert_{\overline{\gamma}}^{2}+\left(\mathrm{tr}_{\overline{\gamma}}k\right)^{2}=2\chi_{\Omega_{0}}\left(G\left(\left(\left(V_{0}\right)^{2}-\overline{\gamma}^{ij}V_{i}V_{j}\right)^{\frac{1}{2}}\right)\left(V_{0}\right)^{2}-p\left(\left(\left(V_{0}\right)^{2}-\overline{\gamma}^{ij}V_{i}V_{j}\right)^{\frac{1}{2}}\right)\right),\\
      &\overline{\nabla}^{i}k_{ij}-\overline{\nabla}_{j}\left(\mathrm{tr}_{\overline{\gamma}}k\right)=\chi_{\Omega_{0}}\left(G\left(\left(\left(V_{0}\right)^{2}-\overline{\gamma}^{ij}V_{i}V_{j}\right)^{\frac{1}{2}}\right)V_{0}V_{i}\right),
   \end{aligned}    
\end{equation*} 
where $\overline{R}$ and $\overline{\nabla}$ are associated with the metric $\overline{\gamma}$ on the initial slice $\Sigma_{0}$. The initial slice is given as diffeomorphic to $\mathbb{R}^{3}$, with the Roman indices $i,j,k,\dots \in \left\{1,2,3\right\}$ corresponding to a fixed coordinate system on the initial slice. We use a Sobolev extension to extend $V_{\mu}$ over the entire slice $\left\{t=0\right\}$.  If the metric is provided, $V$ can also be extended to the exterior region. Subsequently, we work in Lagrangian coordinates so that $\partial_{t}=\frac{V}{\Vert V\Vert}$. It results in
\begin{equation}\label{g00}
    g_{00}|_{t=0}=-1,\quad \partial_{t}g_{00}|_{t=0}=0,
\end{equation}
and
\begin{equation}\label{g0k}
   g_{0k}|_{t=0}=\frac{1}{\Vert V\Vert}g(V,\partial_{k})=\frac{V_{k}}{\left(\left(V_{0}\right)^{2}-\overline{\gamma}^{ij}V_{i}V_{j}\right)^{\frac{1}{2}}},
\end{equation}
where
\begin{equation*}
    \Vert V\Vert\big|_{t=0}=\sqrt{-g(V,V)}=\left(\left(V_{0}\right)^{2}-\overline{\gamma}^{ij}V_{i}V_{j}\right)^{\frac{1}{2}}.
\end{equation*}
We also observe that the metric $g(0)=g|_{t=0}$ and its  inverse $g^{-1}(0)=g^{-1}\big|_{t=0}$ are fully determined by $\overline{\gamma}$ and the conditions specified in (\ref{g00}) and (\ref{g0k}). Next, we turn to the initial data for $\partial_{t}g_{ij}$. Let $T$ denote the future-directed unit normal vector field to  $\Sigma_{0}$, 
\begin{equation}\label{future directed unit vector}
   T=-\frac{g^{00}}{\sqrt{-g^{00}}}\partial_{t}-\frac{g^{0i}}{\sqrt{-g^{00}}}\partial_{i},
\end{equation}
where $g^{00}< 0$. By the definition of $k$, it has 
\begin{equation}
   2k_{ij}=\left(\mathcal{L}_{T}g\right)_{ij}=T(g_{ij})-g(\left[T,\partial_{i}\right],\partial_{j})-g(\partial_{i},\left[T,\partial_{j}\right]).
\end{equation}  
Thus we have 
\begin{equation}\label{gij'}
      \sqrt{-g^{00}}\partial_{t}g_{ij}=2k_{ij}+\partial_{i}\left(\frac{g^{0\beta}}{\sqrt{-g^{00}}}\right)g_{\beta j}+\partial_{j}\left(\frac{g^{0\beta}}{\sqrt{-g^{00}}}\right)g_{\beta i}+\frac{g^{0k}}{\sqrt{-g^{00}}}\partial_{k}g_{ij} \quad \text{at}\ t=0.
\end{equation}

The next step is to determine the initial values of the Christoffel symbols. For the components $\Gamma_{ij}^{\mu}$, by definition, it has
\begin{equation}\label{Gammaijmu}
   \Gamma_{ij}^{\mu}=g^{\mu k}g_{k\ell}\overline{\Gamma}_{ij}^{\ell}+g^{\mu 0}\left(-\frac{1}{\sqrt{-g^{00}}}k_{ij}+\frac{g^{0k}}{g^{00}}g_{k\ell}\overline{\Gamma}_{ij}^{\ell}\right) \quad \text{at}\ t=0.
\end{equation}
Since $g(\partial_{t},\partial_{t})=-1$ and $g(\nabla_{\partial_{t}}\partial_{t},\partial_{j})=\partial_{i}g(\partial_{t},\partial_{j})-g(\partial_{t},\nabla_{\partial_{i}}\partial_{j})$, we have
\begin{equation}\label{Gammai0mu}
    \Gamma_{i0}^{\mu}=g^{\mu j}\partial_{i}g_{0j}-g^{\mu j}\left(-\frac{1}{\sqrt{-g^{00}}}k_{ij}+\frac{g^{0k}}{g^{00}}g_{k\ell}\overline{\Gamma}_{ij}^{\ell}\right) \quad \text{at}\ t=0,
\end{equation}
and 
\begin{equation}\label{Gamma00mu1}
    \Gamma_{00}^{\mu}=g^{\mu i}g(\nabla_{\partial_{t}}\partial_{t},\partial_{i})=g^{\mu i}\partial_{t}g_{0i}.
\end{equation}
Hence we will find the initial data for $\partial_{t}g_{0i}$. Note that 
\begin{equation*}
    \partial_{t}g_{0i}=\frac{\partial_{t}V_{i}}{\Vert V\Vert}-\frac{V_{i}}{\Vert V\Vert^{2}}\partial_{t}\Vert V\Vert.
\end{equation*}
For the first term, by (\ref{alongu}),
\begin{equation*}
    \partial_{t}V_{i}=\overline{\Gamma}_{0i}^{\mu}V_{\mu}-\partial_{i}\Vert V\Vert \quad \text{at} \ t=0,
\end{equation*}
and 
\begin{equation*}
    \overline{\Gamma}_{0i}^{j}=\frac{1}{2}\overline{\gamma}^{j\ell}\partial_{0}\overline{\gamma}_{i\ell}=\frac{1}{2}\overline{\gamma}^{j\ell}\frac{D_{V}\overline{\gamma}_{i\ell}-V^{m}\partial_{m}\overline{\gamma}_{i\ell}}{V^{0}}=\frac{1}{2}\overline{\gamma}^{j\ell}\frac{\Vert V\Vert \partial_{t}g_{i\ell}-V^{m}\partial_{m}g_{i\ell}}{V^{0}} \quad \text{at} \ t=0.
\end{equation*} 
For the second term, by (\ref{orthu}) and (\ref{Gamma00mu1}),
\begin{equation*}
   \begin{aligned}
      \frac{1}{\eta^{2}}\partial_{t}\Vert V\Vert&=-\Gamma_{\mu 0}^{\mu}\Vert V\Vert=-\Gamma_{i0}^{i}\Vert V\Vert-g^{0i}\Vert V\Vert\partial_{t}g_{0i}\\
      &=-\Gamma_{i0}^{i}\Vert V\Vert-g^{0i}\partial_{t}V_{i}+\left(1+g^{00}\right)\partial_{t}\Vert V\Vert.
   \end{aligned}
\end{equation*}
Since $\frac{1}{\eta^{2}}-1-g^{00}>0$, we obtain 
\begin{equation*}
   \partial_{t}\Vert V\Vert=\frac{1}{\frac{1}{\eta^{2}}-1-g^{00}}\left(-\Gamma_{i0}^{i}\Vert V\Vert-g^{0i}\partial_{t}V_{i}\right).
\end{equation*}
Hence
\begin{equation*}
    \partial_{t}g_{0i}=\frac{\partial_{t}V_{i}}{\Vert V\Vert}+\frac{V_{i}}{\left(\frac{1}{\eta^{2}}-1-g^{00}\right)\Vert V\Vert^{2}}\left(-\Gamma_{i0}^{i}\Vert V\Vert-g^{0i}\partial_{t}V_{i}\right) \quad \text{at}\ t=0,
\end{equation*}
and by (\ref{Gamma00mu1}),
\begin{equation}
   \Gamma_{00}^{\mu}=g^{\mu i}\partial_{t}g_{0i} \quad \text{at}\ t=0.
\end{equation}
Then the initial data for the curvature can be determined in the usual way, provided that the data for $g, \partial_{t}g$, $\Gamma_{\alpha\beta}^{\mu}$ are known. For $R_{ij\ell m}$ and $R_{\ell Tij}$, by the Gauss and Codazzi equations,
\begin{equation}\label{Gauss-Codazzi equ}
   \begin{aligned}
      &R_{ij\ell m}|_{t=0}=\overline{R}_{ij\ell m}-k_{ij}k_{\ell m}+k_{i\ell}k_{jm},\\
      &R_{\ell Tij}|_{t=0}=\overline{\nabla}_{i}k_{j\ell}-\overline{\nabla}_{j}k_{i\ell}.
   \end{aligned}
\end{equation}
For the components $R_{iTjT}$, by (\ref{Ricc}),
\begin{equation}\label{remaining R}
   R_{iTjT}=\overline{R}_{ij}+k_{ij}\mathrm{tr}_{\overline{\gamma}}k-k_{i\ell}k_{j}^{\ell}-\left(GV_{i}V_{j}+\frac{1}{2}G\Vert V\Vert^{2}g_{ij}-pg_{ij}\right)\chi_{\Omega_{0}}\quad \text{at} \ t=0.
\end{equation}
Now we can fully determine the initial data for $g_{\alpha\beta}, \partial_{\mu}g_{\alpha\beta}, \Gamma_{\alpha\beta}^{\mu}, R_{\alpha\beta\mu\nu}$. Based on this information and a choice of initial frame $\left\{e_{I}\right\}$, the initial data of $\Gamma_{IJ}^{K}$ and $R_{IJKL}$ can be determined as well. Since $\nabla_{\partial_{t}}e_{I}=0$, it has
\begin{equation}\label{e'}
   \partial_{t}e_{I}^{\nu}=-\Gamma_{0\mu}^{\nu}e_{I}^{\mu}\quad \text{at}\ t=0
\end{equation}
and
\begin{equation}\label{GammaIJK}
   \Gamma_{IJ}^{K}=\epsilon_{K}e_{K}^{\alpha}e_{I}^{\mu}e_{J}^{\beta}g_{\alpha\nu}\Gamma_{\mu\beta}^{\nu}+\epsilon_{K}e_{K}^{\alpha}e_{I}^{\mu}g_{\alpha\beta}\partial_{\mu}e_{J}^{\beta} \quad \text{at}\ t=0.
\end{equation}

\subsection{Outline of the paper} In Section \ref{sec2}, we derive a priori estimates for the systems involving geometric quantities and fluid quantities $\Theta$ and $\Vert V\Vert^{2}$. In Section \ref{sec3}, we employ the Galerkin approximation to establish the iteration and prove its convergence. Based on these results, we show that the derived solutions satisfy the original equations, thus completing our main theorem \ref{main thm}.     	
\subsection*{Acknowledgment}
This work was supported by National Key R \& D Program of China 2021YFA1001700,  NSFC grants 12071360, 12221001, \& 12326344.
   \section{A Prior Estimates}\label{sec2}
   In this section, we consider a priori estimates for the system $(\ref{main structure equ})$. Let us start by defining energies as follows
   \begin{equation*}
       E_{k}(T):=\sup_{0\le t\le T}\left(\Vert \partial\partial_{t}^{k}\Theta\Vert_{L^{2}(\Omega_{t})}^{2}+\Vert \partial_{t}^{k+1}\Theta\Vert_{L^{2}(\partial\Omega_{t})}^{2}+\Vert \partial\partial_{t}^{k+1}\Vert V\Vert^{2}\Vert_{L^{2}(\Omega_{t})}^{2}+\Vert\partial_{t}^{k}R\Vert_{L^{2}(\Sigma_{t})}^{2}\right)+\Vert \partial\partial_{t}^{k+1}\Vert V\Vert^{2}\Vert_{L^{2}(\partial\Omega_{0}^{T})}^{2}
   \end{equation*}
   and 
   \begin{equation*}
        \mathcal{E}_{\ell}(T)=\sum_{k\le \ell}E_{k}(T).
   \end{equation*}
   And our main work in this section is to make the following estimates.
   \begin{proposition}\label{main prop}
      Suppose $(\Theta, R, \partial_{t}\Vert V\Vert^{2}, \Vert V\Vert^{2})$ is a solution to the system $(\ref{main structure equ})$ with
      \begin{equation*}
              \mathcal{E}_{\ell}(T)\le C_{1},
      \end{equation*}
      for some constant $C_{1}$ and $\ell$ sufficiently large. Let
      \begin{equation*}
          \mathscr{E}_{\ell}=\mathcal{E}_{\ell}(T)+\sup_{0\le t\le T}\sum_{2p+k\le \ell+2}\left(\Vert \partial^{p}\partial_{t}^{k}\Theta\Vert_{L^{2}(\Omega_{t})}^{2}+\Vert \partial^{p}\partial_{t}^{k+1}\Vert V\Vert^{2}\Vert_{L^{2}(\Omega_{t})}^{2}\right)+\sup_{0\le t\le T}\sum_{2p+k\le \ell+1}\left(\Vert \partial^{p}\partial_{t}^{k}R\Vert_{L^{2}(\Omega_{t})}^{2}+\Vert \partial^{p}\partial_{t}^{k}R\Vert_{L^{2}(\Omega_{t}^{c})}^{2}\right).
      \end{equation*}
      If $T>0$ is sufficiently small, depending on $C_{1}$ (independent of $C_{1}$), $\mathscr{E}_{\ell}(0)$, $\ell$, $c_0$, then
      \begin{equation*}
               \mathcal{E}_{\ell}(T)\le \mathcal{P}_{\ell}(\mathscr{E}_{\ell}(0))
      \end{equation*}
      for some polynomial function $\mathcal{P}_{\ell}$.
   \end{proposition}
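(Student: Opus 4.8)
The plan is to close a Gronwall-type estimate on $\mathcal{E}_\ell(T)$ by treating the coupled system $(\ref{main structure equ})$ as a hierarchy: fluid components $\Theta^I$ and $\Vert V\Vert^2$ satisfying interior--boundary wave equations, the curvature $W^{AB}$ (equivalently $R_{IJKL}$) satisfying a symmetric hyperbolic system, and $e_I$, $\Gamma_{IJ}^K$, $\omega_{IJ}$ satisfying transport equations. First I would fix a bootstrap assumption $\mathcal{E}_\ell(T)\le C_1$ and use it, together with the extra lower-order norms collected in $\mathscr{E}_\ell$ and standard Sobolev product/composition estimates on the (compact, product-structured) domain, to control all the nonlinear source terms $\mathcal{F}_\bullet$ appearing on the right-hand sides. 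Because the frame is parallel-transported, the transport equations for $e_I-\partial_I$ and $\Gamma_{IJ}^K$ integrate directly in $t$, giving $\sup_t\|\partial^p\partial_t^k(e_I-\partial_I)\|_{L^2}+\|\partial^p\partial_t^k\Gamma\|_{L^2}\lesssim \mathscr{E}_\ell(0)+T\,\mathcal{P}(C_1)$, and similarly the vorticity transport equation $(\ref{transport w})$ (commuted with $\partial_t^k$) controls $\partial_t^k\omega_{IJ}$ in terms of $\partial_t^k\nabla\Theta$ plus lower order — crucially \emph{without} raising the spatial derivative count, which is the point flagged in the Main Ideas section.

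For the fluid variables I would commute $\partial_t^k$ through equations $(\ref{2V boundary equ})$, $(\ref{2V interior frame equ})$, $(\ref{2V^2 frame equ})$, $(\ref{2DV^2 frame equ})$ to reach the form $(\ref{energy equ1'})$--$(\ref{energy equ2'})$ recorded in Lemmas~\ref{V higher order lemma}--\ref{V^2 higher order lemma}. For $\partial_t^k\Theta^I$, multiply the boundary equation by $\frac1\gamma\partial_t^{k}\Theta\sqrt{|g|}$ and the interior wave equation by $\partial_t^{k}\Theta\sqrt{|g|}$, integrate, and use the Taylor sign condition $\gamma\gtrsim c_0>0$ (propagated from $(\ref{compatibility conditions})$ for short time) to see the boundary term produced by integration by parts has the good sign; this yields the energy $\int_{\Omega_t}|\partial_{t,x}\partial_t^k\Theta|^2 + \int_{\partial\Omega_t}\frac1\gamma|\partial_t^{k+1}\Theta|^2$ of Lemma~\ref{V energy lemma}. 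For $\partial_t^{k+1}\Vert V\Vert^2$, which solves a Dirichlet wave problem, absorb the $\frac{2G'\Vert V\Vert^2}{G}\partial_t^{k+3}\Vert V\Vert^2$ term into a reshaped operator $\overline\square$ using $G'\ge0$, and rewrite the dangerous vorticity term via the observation $\partial_t^k(\tfrac{2\epsilon_K}{\Vert V\Vert}\Theta^J\nabla_V\nabla_K\omega_{KJ})\sim\partial_t^k(\Theta^J\Box\Theta^L\omega_{LJ})+\partial_t^{k-1}M$, so the top-order piece is expressible through the already-controlled $\Box\partial_t^{k-?}\Theta$ rather than through an uncontrolled $\nabla^{(2)}\partial_t^k\Theta$; then a multiplier that is a suitable combination of $\partial_t$ and the unit normal $n$ gives interior energy plus a space-time boundary term controlling $\partial_{t,x}\partial_t^k\Vert V\Vert^2$ on $\partial\Omega_0^T$. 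Elliptic estimates on each slice then upgrade one derivative of $\partial_t\Vert V\Vert^2$ to two spatial derivatives, filling in the $\mathscr{E}_\ell$ norms.

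For the curvature I would use the symmetric hyperbolic system $(\ref{hyperbolic system})$ for $W^{AB}$ — valid weakly across $\partial\Omega$ since $X_A,X_B$ are chosen tangential — and the standard energy estimate for such systems to get $\sup_t\|\partial_t^kR\|_{L^2(\Sigma_t)}^2\lesssim\|\partial_t^kR(0)\|^2+\int_0^t\mathcal{P}(C_1)$, with the source $\mathcal{K}$ controlled by the fluid and transport norms already in hand; higher spatial derivatives of $R$ (needed to close the product estimates for the lower-order terms) come from revisiting the curl-divergence split $(\ref{Maxwell equ curl})$--$(\ref{Maxwell equ div})$ together with the algebraic/Bianchi relations in $(\ref{vanishing geometric quantities})$, trading one spatial derivative of $E,H$ against one time derivative plus divergence data. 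Summing over $k\le\ell$, all estimates take the schematic form $\mathcal{E}_\ell(T)\le \mathcal{P}_\ell(\mathscr{E}_\ell(0)) + T\,\mathcal{Q}_\ell(C_1)$, so choosing $T$ small depending on $C_1$, $\mathscr{E}_\ell(0)$, $\ell$, $c_0$ — and small enough that $\gamma\gtrsim c_0/2$ and $\frac1{\eta^2}-1-g^{00}>0$ persist — absorbs the bad term and yields the claimed bound. The main obstacle is the derivative-counting bookkeeping around the vorticity: ensuring that every appearance of $\nabla_V\omega$ or $\nabla\omega$, after commutation with $\partial_t^k$ and use of $(\ref{transport w})$ and the wave equations, is rewritten so that no term carries more than $\ell$ effective derivatives on $\Theta$ or more than $\ell$ on $R$; once this reorganization is in place the energy identities and Gronwall closure are routine.
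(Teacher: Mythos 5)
Your proposal follows essentially the same route as the paper's proof: the energy identities with the $\gamma^{-1}$-weighted boundary multiplier and the $\partial_t-\alpha n$ multiplier for the Dirichlet problem, absorption of the $\frac{2G'\Vert V\Vert^2}{G}\partial_t^{k+3}\Vert V\Vert^2$ term using $G'\ge 0$, the rewriting of $\partial_t^k(\Theta^J\nabla_V\nabla_K\omega_{KJ})$ via $\Theta^J\Box\Theta^L\omega_{LJ}$ to avoid derivative loss, the elliptic/div--curl estimates to recover spatial derivatives, and the symmetric-hyperbolic energy for the curvature, closed by a Gronwall/absorption argument in small time. This matches the paper's Sections 2.1--2.4 (the paper organizes the closing step as an induction on the number of time derivatives with some explicit integrations by parts on the $\partial^2\partial_t^{k-1}\Theta$ contributions, but these are details within the same strategy).
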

   \subsection{Energy identities}\label{sec2.1}
   In this subsection we prove the basic energy estimates which can be used in the proof of Proposition $\ref{main prop}$. Let $Q=Q^{\mu}\nabla_{\mu}$ be any first order multiplier that is applied to the wave equation. The following direct calculation shows
   \begin{equation}\label{energy identity}
     \left(\Box u\right)\left(Qu\right)=\nabla_{\mu}\left(\left(Qu\right)\left(\nabla^{\mu}u\right)-\frac{1}{2}Q^{\mu}g^{\alpha\beta}\partial_{\alpha}u\partial_{\beta}u\right)+\frac{1}{2}\left(\nabla_{\mu}Q^{\mu}\right)g^{\alpha\beta}\partial_{\alpha}u\partial_{\beta}u-\left(\nabla^{\mu}Q^{\alpha}\right)\partial_{\mu}u\partial_{\alpha}u.
   \end{equation}
   
   Let us start with the main energy estimate for $\Theta^{I}$.
   \begin{lemma}\label{V energy lemma}
   	   Recall that $\Theta^{I}$ satisfies 
   	   \begin{equation}\label{energy equ1}
   	        \begin{cases}
   	            \left(\partial_{t}^{2}+\gamma D_{n}\right)\Theta^{I}=f  &\quad \text{on}\ \partial\Omega\\
   	            \Box \Theta^{I} =H &\quad \text{in}\ \Omega
   	        \end{cases}.
   	   \end{equation}
   	   Then 
   	   \begin{equation}\label{energy equ1.1}
   	      \begin{aligned}
   	          &\int_{\Omega_{T}}\left(-g^{0\nu}\partial_{t}\Theta^{I}\partial_{\nu}\Theta^{I}+\frac{1}{2}g^{\mu\nu}\partial_{\mu}\Theta^{I}\partial_{\nu}\Theta^{I}\right)\sqrt{\abs{g}}\mathrm{d}x+\frac{1}{2}\int_{\partial \Omega_{T}}\gamma^{-1}\left(\partial_{t}\Theta^{I}\right)^{2}\sqrt{\abs{g}}\mathrm{d}S\\
   	         &=\int_{\Omega_{0}}\left(-g^{0\nu}\partial_{t}\Theta^{I}\partial_{\nu}\Theta^{I}+\frac{1}{2}g^{\mu\nu}\partial_{\mu}\Theta^{I}\partial_{\nu}\Theta^{I}\right)\sqrt{\abs{g}}\mathrm{d}x+\frac{1}{2}\int_{\partial \Omega_{0}}\gamma^{-1}\left(\partial_{t}\Theta^{I}\right)^{2}\sqrt{\abs{g}}\mathrm{d}S\\
   	         &-\int_{0}^{T}\int_{\Omega_{t}}H\partial_{t}\Theta^{I}\sqrt{\abs{g}}\mathrm{d}x\mathrm{d}t+\int_{0}^{T}\int_{\partial\Omega}\gamma^{-1}f\partial_{t}\Theta^{I}\sqrt{\abs{g}}\mathrm{d}S\mathrm{d}t\\
   	         &+\frac{1}{2}\int_{0}^{T}\int_{\partial\Omega_{t}}\left(\partial_{t}\left(\gamma^{-1}\sqrt{\abs{g}}\right)\right)\left(\partial_{t}\Theta^{I}\right)^{2}\mathrm{d}S\mathrm{d}t+\frac{1}{2}\int_{0}^{T}\int_{\Omega_{t}}\left(\partial_{t}\left(\sqrt{\abs{g}}g^{\mu\nu}\right)\right)\partial_{\mu}\Theta^{I}\partial_{\nu}\Theta^{I}\mathrm{d}x\mathrm{d}t.
   	      \end{aligned}
   	   \end{equation}
   \end{lemma}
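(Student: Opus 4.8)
The plan is to derive \eqref{energy equ1.1} as a spacetime integration of the basic energy identity \eqref{energy identity} applied to $u=\Theta^I$ with the multiplier $Q=\partial_t$, i.e. $Q^\mu=\delta_0^\mu$, together with a separate treatment of the boundary wave equation. First I would take the pointwise identity \eqref{energy identity} with $Qu=\partial_t\Theta^I$, which gives
\begin{equation*}
  (\Box\Theta^I)(\partial_t\Theta^I)=\nabla_\mu\Big((\partial_t\Theta^I)(\nabla^\mu\Theta^I)-\tfrac12\delta_0^\mu g^{\alpha\beta}\partial_\alpha\Theta^I\partial_\beta\Theta^I\Big)+\tfrac12(\nabla_\mu\delta_0^\mu)g^{\alpha\beta}\partial_\alpha\Theta^I\partial_\beta\Theta^I-(\nabla^\mu\delta_0^\alpha)\partial_\mu\Theta^I\partial_\alpha\Theta^I,
\end{equation*}
substitute $\Box\Theta^I=H$, and rewrite the covariant divergence on the left as an ordinary divergence times $\sqrt{|g|}$, using $\sqrt{|g|}\,\nabla_\mu Y^\mu=\partial_\mu(\sqrt{|g|}\,Y^\mu)$. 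Integrating over the spacetime slab $\bigcup_{0\le t\le T}\Omega_t$ and applying the divergence theorem converts the total-divergence term into a flux through $\Omega_T$, $\Omega_0$, and the lateral boundary $\partial\Omega_0^T$. The $\Omega_T$ and $\Omega_0$ fluxes, with outward normals $\pm\partial_t$, produce exactly the first interior integrals on the left and right of \eqref{energy equ1.1} (after the sign of the $g^{00}$ term is absorbed into $-g^{0\nu}\partial_t\Theta^I\partial_\nu\Theta^I+\tfrac12 g^{\mu\nu}\partial_\mu\Theta^I\partial_\nu\Theta^I$), while the lateral flux produces a boundary integral involving the conormal derivative of $\Theta^I$.

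The key step is to identify this lateral flux term using the boundary equation $(\partial_t^2+\gamma D_n)\Theta^I=f$. On $\partial\Omega$ the conormal direction associated to $\Box$ is (up to a positive factor) the spatial outward normal $n$; more precisely, the lateral contribution of $(\partial_t\Theta^I)(\nabla^\mu\Theta^I)$ contracted with the surface measure yields a term proportional to $(\partial_t\Theta^I)(D_n\Theta^I)$. I would then substitute $D_n\Theta^I=\gamma^{-1}(f-\partial_t^2\Theta^I)$ from the boundary equation. The term $-\gamma^{-1}(\partial_t^2\Theta^I)(\partial_t\Theta^I)=-\tfrac12\gamma^{-1}\partial_t\big((\partial_t\Theta^I)^2\big)$ is a perfect time derivative on $\partial\Omega$: integrating it in $t$ gives the boundary energy $\tfrac12\int_{\partial\Omega_T}\gamma^{-1}(\partial_t\Theta^I)^2\sqrt{|g|}\,\mathrm dS$ minus its value at $t=0$, plus the commutator term $\tfrac12\int_0^T\!\int_{\partial\Omega_t}\partial_t(\gamma^{-1}\sqrt{|g|})(\partial_t\Theta^I)^2\,\mathrm dS\,\mathrm dt$ coming from differentiating the weight $\gamma^{-1}\sqrt{|g|}$. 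The remaining piece $\gamma^{-1}f\,\partial_t\Theta^I$ is exactly the source boundary integral in \eqref{energy equ1.1}. Finally, the bulk terms $\tfrac12(\nabla_\mu\delta_0^\mu)g^{\alpha\beta}\partial_\alpha\Theta^I\partial_\beta\Theta^I-(\nabla^\mu\delta_0^\alpha)\partial_\mu\Theta^I\partial_\alpha\Theta^I$ reorganize, after multiplying by $\sqrt{|g|}$ and using $\partial_\mu(\sqrt{|g|}g^{\mu\nu})$ identities, into $\tfrac12\int\partial_t(\sqrt{|g|}g^{\mu\nu})\partial_\mu\Theta^I\partial_\nu\Theta^I$, which is the last term on the right; and $-H\partial_t\Theta^I\sqrt{|g|}$ integrated over the slab gives the remaining source term.

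The main obstacle I anticipate is bookkeeping the sign of the lateral boundary flux and correctly matching the geometric normal/conormal factors so that the coefficient of $(\partial_t\Theta^I)D_n\Theta^I$ comes out exactly $+1$ relative to the surface measure $\sqrt{|g|}\,\mathrm dS$ used in the statement — i.e. verifying that the induced measure on $\partial\Omega_0^T$ from the divergence theorem, combined with the Lagrangian structure in which $\partial_t$ is tangent to the lateral boundary, reproduces $\sqrt{|g|}\,\mathrm dS\,\mathrm dt$ without stray lapse factors. This is where the choice of Lagrangian coordinates (so that $V/\|V\|=\partial_t$ and the lateral boundary is $[0,T]\times\partial\Omega_t$) is used crucially: it guarantees that $\partial_t$ has no normal component on $\partial\Omega$, so no extra flux from the $\partial_t$-multiplier through the lateral boundary beyond the conormal-derivative term. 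Everything else is the routine divergence-theorem computation; once the boundary term is correctly identified and the boundary equation substituted, the perfect-time-derivative structure delivers \eqref{energy equ1.1} directly.
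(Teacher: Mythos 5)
Your proposal is correct and is exactly the paper's argument: the authors' one-line proof is to multiply the interior equation by $\partial_t\Theta^I\sqrt{|g|}$ (i.e.\ apply \eqref{energy identity} with $Q=\partial_t$), multiply the boundary equation by $\gamma^{-1}\partial_t\Theta^I\sqrt{|g|}$, and integrate, which is precisely your substitution of $D_n\Theta^I=\gamma^{-1}(f-\partial_t^2\Theta^I)$ into the lateral flux. Your identification of the bulk remainder as $\tfrac12\partial_t(\sqrt{|g|}\,g^{\mu\nu})\partial_\mu\Theta^I\partial_\nu\Theta^I$ and of the perfect time derivative on the boundary also checks out.
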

   \begin{proof}
   	 The lemma from multiplying the first and second equations in $(\ref{energy equ1})$ by $\gamma^{-1}\partial_{t}\Theta^{I}\sqrt{\abs{g}}$ and $\partial_{t}\Theta^{I}\sqrt{\abs{g}}$, respectively, and integrating.     
   \end{proof}
   The next energy estimate is used for $\partial_{t}\Vert V\Vert^{2}$.
   \begin{lemma}\label{V^2 energy lemma}
   Recall that $\partial_{t}\Vert V\Vert^{2}$ satisfies
   	   \begin{equation}\label{energy equ2}
   	      \begin{cases}
   	          \square\partial_{t}\Vert V\Vert^{2}=\frac{2}{G\Vert V\Vert}D_{V}^{3}G+F \quad &\text{in}\ \Omega\\
   	          \partial_{t}\Vert V\Vert^{2}\equiv 0\quad &\text{on}\ \partial\Omega
   	      \end{cases}.
   	   \end{equation}
   Then there is a future directed timelike vectorfield $Q=\partial_{t}-\alpha n$ such that for some constant $\alpha>0$, 
   	   \begin{equation}\label{energy equ2.1}
   	       \begin{aligned}
   	       &\int_{\Omega_{T}}\left(-Q\partial_{t}\Vert V\Vert^{2}g^{0\nu}\partial_{\nu}\partial_{t}\Vert V\Vert^{2}+\frac{Q^{0}}{2}g^{\mu\nu}\partial_{\mu}\partial_{t}\Vert V\Vert^{2}\partial_{\nu}\partial_{t}\Vert V\Vert^{2}\right)\sqrt{\abs{g}}\mathrm{d}x+\frac{\alpha}{2}\int_{0}^{T}\int_{\partial\Omega_{t}}\left(D_{n}\partial_{t}\Vert V\Vert^{2}\right)^{2}\sqrt{\abs{g}}\mathrm{d}S\mathrm{d}t\\
   	      &+\int_{\Omega_{T}}\frac{\Vert V\Vert^{2}G^{'}}{G}\left(\partial_{t}^{2}\Vert V\Vert^{2}\right)^{2}\sqrt{\abs{g}}\mathrm{d}x-\alpha\int_{\Omega_{T}}\frac{2\Vert V\Vert^{2}G^{'}}{G}\partial_{t}^{2}\Vert V\Vert^{2}D_{n}\partial_{t}\Vert V\Vert^{2}\sqrt{\abs{g}}\mathrm{d}x\\
   	      &=\int_{\Omega_{0}}\left(-Q\partial_{t}\Vert V\Vert^{2}g^{0\nu}\partial_{\nu}\partial_{t}\Vert V\Vert^{2}+\frac{Q^{0}}{2}g^{\mu\nu}\partial_{\mu}\partial_{t}\Vert V\Vert^{2}\partial_{\nu}\partial_{t}\Vert V\Vert^{2}\right)\sqrt{\abs{g}}\mathrm{d}x-\int_{0}^{T}FQ\partial_{t}\Vert V\Vert^{2}\sqrt{\abs{g}}\mathrm{d}x\mathrm{d}t\\
   	      &+\int_{\Omega_{0}}\frac{\Vert V\Vert^{2}G^{'}}{G}\left(\partial_{t}^{2}\Vert V\Vert^{2}\right)^{2}\sqrt{\abs{g}}\mathrm{d}x-\alpha\int_{\Omega_{0}}\frac{2\Vert V\Vert^{2}G^{'}}{G}\partial_{t}^{2}\Vert V\Vert^{2}D_{n}\partial_{t}\Vert V\Vert^{2}\sqrt{\abs{g}}\mathrm{d}x\\
   	      &-\int_{0}^{T}\int_{\Omega_{t}}\left(g^{\alpha\beta}\left(\partial_{\alpha}Q^{\mu}\right)\partial_{\mu}\partial_{t}\Vert V\Vert^{2}\partial_{\beta}\partial_{t}\Vert V\Vert^{2}-\frac{1}{2}\partial_{\mu}\left(\sqrt{\abs{g}}g^{\alpha\beta}Q^{\mu}\right)\partial_{\alpha}\partial_{t}\Vert V\Vert^{2}\partial_{\beta}\partial_{t}\Vert V\Vert^{2}\right)\mathrm{d}x\mathrm{d}t\\
   	      &-\int_{0}^{T}\int_{\Omega_{t}}\alpha D_{n}\left(\frac{\Vert V\Vert^{2}G^{'}}{G}\left(\partial_{t}^{2}\Vert V\Vert^{2}\right)^2\sqrt{\abs{g}}\right)\mathrm{d}x\mathrm{d}t.
   	      \end{aligned}
   	   \end{equation}
   \end{lemma}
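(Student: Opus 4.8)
The plan is to derive the identity (\ref{energy equ2.1}) from the basic energy identity (\ref{energy identity}) applied to $u=\partial_t\Vert V\Vert^2$ with the multiplier $Q=\partial_t-\alpha n$, after moving the troublesome term $\tfrac{2}{G\Vert V\Vert}D_V^3G$ to the left-hand side and recognizing it as a perfect $t$-derivative up to controllable error. First I would record that, since $D_V=\Vert V\Vert\partial_t$ on the Lagrangian foliation, $D_V^3G = \Vert V\Vert\,\partial_t(\Vert V\Vert\,\partial_t(\Vert V\Vert\,\partial_t G))$, and $\nabla_\mu G=G'\nabla_\mu\Vert V\Vert^2$ with $G'=\tfrac12(\eta^{-2}-1)G/\Vert V\Vert^2\ge 0$ from (\ref{G equ}); hence $\tfrac{2}{G\Vert V\Vert}D_V^3 G = \tfrac{2\Vert V\Vert^2 G'}{G}\partial_t^3\Vert V\Vert^2 + (\text{lower order})$, where the lower-order remainder involves at most two derivatives of $\Vert V\Vert^2$ and derivatives of $G',G,\Vert V\Vert$, which can be absorbed into $F$. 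So without loss of generality the interior equation reads $\Box\,\partial_t\Vert V\Vert^2 - \tfrac{2\Vert V\Vert^2G'}{G}\partial_t^3\Vert V\Vert^2 = F$ (this is the operator $\overline\square$ mentioned in the introduction).

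Next I would multiply this equation by $Q(\partial_t\Vert V\Vert^2)\sqrt{|g|}$ and integrate over $\Omega_0^T$. The $\Box$ term produces, via (\ref{energy identity}), the divergence $\nabla_\mu\big((Qu)\nabla^\mu u - \tfrac12 Q^\mu g^{\alpha\beta}\partial_\alpha u\partial_\beta u\big)$ plus the bulk terms $\tfrac12(\nabla_\mu Q^\mu)g^{\alpha\beta}\partial_\alpha u\partial_\beta u - (\nabla^\mu Q^\alpha)\partial_\mu u\partial_\alpha u$; integrating the divergence by the Gauss theorem over $\Omega_0^T$ gives the boundary contributions at $\{t=T\}$, $\{t=0\}$, and $\partial\Omega_0^T$. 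On $\{t=T\}$ and $\{t=0\}$ the outward conormal is $\pm\partial_t$, producing the first flux term $-Qu\,g^{0\nu}\partial_\nu u + \tfrac{Q^0}{2}g^{\mu\nu}\partial_\mu u\partial_\nu u$ as displayed. On the lateral boundary $\partial\Omega_0^T$, I use that $u=\partial_t\Vert V\Vert^2\equiv 0$ there, so all tangential derivatives of $u$ vanish and only $D_n u$ survives; the $\partial_t$-part of $Q$ contributes nothing (its flux is tangential times a vanishing factor), while the $-\alpha n$-part yields, after the usual algebra with $n$ a unit spacelike normal, a term $\tfrac{\alpha}{2}\int_0^T\int_{\partial\Omega_t}(D_n u)^2\sqrt{|g|}\,dS\,dt$ with the correct sign because $Q$ is future timelike. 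The bulk terms $\tfrac12(\nabla_\mu Q^\mu)\cdots - (\nabla^\mu Q^\alpha)\cdots$ together with the Jacobian variation $\partial_\mu(\sqrt{|g|}g^{\alpha\beta}Q^\mu)$ reorganize into the next-to-last line of (\ref{energy equ2.1}).

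For the extra term $-\tfrac{2\Vert V\Vert^2G'}{G}\partial_t^3\Vert V\Vert^2\cdot Q(\partial_t\Vert V\Vert^2)\sqrt{|g|}$: split $Q=\partial_t-\alpha n$. The $\partial_t$-piece gives $-\tfrac{2\Vert V\Vert^2G'}{G}\partial_t^3\Vert V\Vert^2\,\partial_t^2\Vert V\Vert^2\sqrt{|g|} = -\partial_t\big(\tfrac{\Vert V\Vert^2G'}{G}(\partial_t^2\Vert V\Vert^2)^2\sqrt{|g|}\big) + (\partial_t\text{ of the coefficient})(\partial_t^2\Vert V\Vert^2)^2$; integrating the perfect derivative in $t$ yields the $\int_{\Omega_T}\tfrac{\Vert V\Vert^2G'}{G}(\partial_t^2\Vert V\Vert^2)^2\sqrt{|g|}$ minus its $\Omega_0$ counterpart on the left, and the coefficient term is lower order (absorbable). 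The $-\alpha n$-piece gives $+\alpha\,\tfrac{2\Vert V\Vert^2G'}{G}\partial_t^3\Vert V\Vert^2\,D_n\partial_t\Vert V\Vert^2\sqrt{|g|}$; since $\partial_t^3\Vert V\Vert^2 = \partial_t(\partial_t^2\Vert V\Vert^2)$ and $D_n$ commutes with $\partial_t$ up to lower-order frame terms, I write this (up to a total $t$-derivative producing the $\Omega_T,\Omega_0$ terms $-\alpha\int\tfrac{2\Vert V\Vert^2G'}{G}\partial_t^2\Vert V\Vert^2\,D_n\partial_t\Vert V\Vert^2\sqrt{|g|}$) as a spacetime integral of $\alpha D_n\big(\tfrac{\Vert V\Vert^2G'}{G}(\partial_t^2\Vert V\Vert^2)^2\sqrt{|g|}\big)$, which is the last line of (\ref{energy equ2.1}). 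Collecting all pieces and putting the flux-at-$T$, the lateral positive flux, and the two new $\Omega_T$ bulk terms on the left, and everything else on the right, gives exactly (\ref{energy equ2.1}). I expect the main obstacle to be the lateral-boundary computation: one must check that the vanishing Dirichlet condition $\partial_t\Vert V\Vert^2\equiv 0$ on $\partial\Omega$ genuinely kills all terms except $(D_n\partial_t\Vert V\Vert^2)^2$ with a favorable sign, keeping careful track of the relation between the coordinate conormal, the geometric unit normal $n$, and the factor $\sqrt{|g|}$, and likewise ensuring the boundary contributions from the $-\tfrac{2\Vert V\Vert^2G'}{G}\partial_t^3\Vert V\Vert^2$ manipulation are consistent with the Dirichlet condition; the bulk error terms are routine once the structure is laid out.
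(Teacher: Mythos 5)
Your proposal is correct and follows essentially the same route as the paper: apply the multiplier identity (\ref{energy identity}) with $Q=\partial_t-\alpha n$, rewrite $\frac{2}{G\Vert V\Vert}D_V^3G\,Q\partial_t\Vert V\Vert^2\sqrt{|g|}$ as the exact derivatives $\partial_t\bigl(\frac{\Vert V\Vert^2G'}{G}(\partial_t^2\Vert V\Vert^2)^2\sqrt{|g|}\bigr)-\alpha\partial_t\bigl(\frac{2\Vert V\Vert^2G'}{G}\partial_t^2\Vert V\Vert^2 D_n\partial_t\Vert V\Vert^2\sqrt{|g|}\bigr)+\alpha D_n\bigl(\frac{\Vert V\Vert^2G'}{G}(\partial_t^2\Vert V\Vert^2)^2\sqrt{|g|}\bigr)$ plus absorbable lower-order terms, and use the Dirichlet condition to reduce the lateral flux to $(D_n\partial_t\Vert V\Vert^2)^2$ via $g^{\mu\nu}\partial_\mu\partial_t\Vert V\Vert^2\partial_\nu\partial_t\Vert V\Vert^2=(D_n\partial_t\Vert V\Vert^2)^2$ on $\partial\Omega$. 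The extra detail you supply on the lateral-boundary sign computation is consistent with what the paper leaves implicit.
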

   \begin{proof}
   	  Multiplying the interior equations by $Q\partial_{t}\Vert V\Vert^{2}\sqrt{\abs{g}}$ and using $(\ref{energy identity})$, we get
   	  \begin{equation}\label{energy identity2.2}
   	     \begin{aligned}
   	         \frac{2}{G\Vert V\Vert}D_{V}^{3}GQ\partial_{t}\Vert V\Vert^{2}\sqrt{\abs{g}}+FQ\partial_{t}\Vert V\Vert^{2}\sqrt{\abs{g}}&=\partial_{\alpha}\left(Q^{\mu}\partial_{\mu}\partial_{t}\Vert V\Vert^{2}\sqrt{\abs{g}}g^{\alpha\beta}\partial_{\beta}\partial_{t}\Vert V\Vert^{2}-\frac{1}{2}\sqrt{\abs{g}}g^{\mu\nu}Q^{\alpha}\partial_{\mu}\partial_{t}\Vert V\Vert^{2}\partial_{\nu}\partial_{t}\Vert V\Vert^{2}\right)\\
   	         &-g^{\alpha\beta}\left(\partial_{\alpha}Q^{\mu}\right)\partial_{\mu}\partial_{t}\Vert V\Vert^{2}\partial_{\beta}\partial_{t}\Vert V\Vert^{2}+\frac{1}{2}\partial_{\mu}\left(\sqrt{\abs{g}}g^{\alpha\beta}Q^{\mu}\right)\partial_{\alpha}\partial_{t}\Vert V\Vert^{2}\partial_{\beta}\partial_{t}\Vert V\Vert^{2}.
   	     \end{aligned}
   	  \end{equation}
   	 The first term on the left-hand side of identity $(\ref{energy identity2.2})$ can be written as
   	 \begin{equation*}
   	    \begin{aligned}
   	         \frac{2}{G\Vert V\Vert}D_{V}^{3}GQ\partial_{t}\Vert V\Vert^{2}\sqrt{\abs{g}}&=\partial_{t}\left(\frac{\Vert V\Vert^{2}G^{'}}{G}\left(\partial_{t}^{2}\Vert V\Vert^{2}\right)^{2}\sqrt{\abs{g}}\right)-\alpha\partial_{t}\left(\frac{2\Vert V\Vert^{2}G^{'}}{G}\partial_{t}^{2}\Vert V\Vert^{2}D_{n}\partial_{t}\Vert V\Vert^{2}\sqrt{\abs{g}}\right)\\
   	         &+\alpha D_{n}\left(\frac{\Vert V\Vert^{2}G^{'}}{G}\left(\partial_{t}^{2}\Vert V\Vert^{2}\right)^2\sqrt{\abs{g}}\right)+I_{1},
   	    \end{aligned}
   	 \end{equation*}
   	 where $I_{1}$ is a linear combination of $\partial_{t}^{2}\Vert V\Vert^{2}$, $D\partial_{t}\Vert V\Vert^{2}$ and $\partial_{t}\Vert V\Vert^{2}$ that can be included in $FQ\partial_{t}\Vert V\Vert^{2}\sqrt{\abs{g}}$. Using the boundary equation of $(\ref{energy equ2})$, we have 
   	 \begin{equation*}
   	      g^{\mu\nu}\partial_{\mu}\partial_{t}\Vert V\Vert^{2}\partial_{\nu}\partial_{t}\Vert V\Vert^{2}=\left(D_{n}\partial_{t}\Vert V\Vert^{2}\right)^{2}.
   	 \end{equation*}
   	 Thus, combining these identities and integrating $(\ref{energy identity2.2})$ over $\left[0,T\right]\times\Omega_{t}$ gives the desired result.
   \end{proof}
   Finally, we use the following lemma to estimate the curvature.
   \begin{lemma}\label{W energy lemma}
   	   Suppose $W$ satisfies 
   	   \begin{equation*}
   	          \mathcal{B}^{\mu}\partial_{\mu}W=\mathcal{K}.
   	   \end{equation*}
   	   Then
   	   \begin{equation*}
   	       \sup_{0\le t\le T}\Vert W(t)\Vert_{L^{2}(\mathbb{R}^{3})}\lesssim\Vert W(0)\Vert_{L^{2}(\mathbb{R}^{3})}+\int_{0}^{T}\left(\int_{\mathbb{R}^{3}}\abs{\mathcal{K}}^{2}\mathrm{d}x+\Vert\partial\mathcal{B}\Vert_{L^{\infty}}\right)\mathrm{d}t.
   	   \end{equation*}
   \end{lemma}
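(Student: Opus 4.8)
The statement is the standard $L^2$ energy inequality for a first-order symmetric hyperbolic system, so the plan is to check that $(\mathcal{B}^\mu)$ genuinely has that structure and then run the usual integration-by-parts-plus-Gr\"onwall argument. First I would record that each $\mathcal{A}^{\tilde I}$ is a constant symmetric $6\times6$ matrix, so $\mathcal{B}^\mu = e_0^\mu\,\mathrm{Id} + e_{\tilde I}^\mu\mathcal{A}^{\tilde I}$ is symmetric for every $\mu$. The eigenvalues of $\sum_{\tilde I}\xi_{\tilde I}\mathcal{A}^{\tilde I}$ are $0$ and $\pm\big(\sum_{\tilde I}\xi_{\tilde I}^2\big)^{1/2}$ (the Maxwell dispersion relation), hence the eigenvalues of $\mathcal{B}^0$ all lie in $\big[\,e_0^0 - |e^0|,\ e_0^0 + |e^0|\,\big]$, where $|e^0| := \big(\sum_{\tilde I}(e_{\tilde I}^0)^2\big)^{1/2}$. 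Now $-g^{00} = (e_0^0)^2 - |e^0|^2$ and $e_0^0$ are both positive — each equal to $1$ at $t=0$, where $e_I = \partial_I$ — so the lower endpoint $e_0^0 - |e^0|$ is strictly positive, and it stays bounded below uniformly on $[0,T]$ thanks to the closeness of $e_I$ to $\partial_I$ furnished by the bounds in $(\ref{regularity conditions})$ and Sobolev embedding; thus $\mathcal{B}^0$ is uniformly positive definite on $[0,T]$. Finally, since $g$ is flat and $V=\partial_t$ outside a fixed compact set, $W(t,\cdot)$ is compactly supported in $\mathbb{R}^3$, so no boundary term arises at spatial infinity; and although the system holds only weakly across $\partial\Omega_t$ (the curvature jumps there), the tangency of $X_A,X_B$ to $\partial\Omega$ renders the $\mathcal{B}^\mu$-flux of $W$ continuous across the interface, so no singular interface term is produced — one verifies this by mollifying $W$, or by integrating by parts separately over $\Omega_t$ and $\Omega_t^c$ and observing that the two boundary contributions cancel.

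Next I would introduce $\mathcal{E}(t) := \int_{\mathbb{R}^3}\langle \mathcal{B}^0 W, W\rangle\,dx$, which by the above is comparable to $\Vert W(t)\Vert_{L^2(\mathbb{R}^3)}^2$ with constants uniform on $[0,T]$. Differentiating in $t$, substituting $\mathcal{B}^0\partial_t W = \mathcal{K} - \mathcal{B}^j\partial_j W$, and integrating by parts in $x^j$ using the symmetry of $\mathcal{B}^j$ gives
\[
\frac{d}{dt}\mathcal{E}(t) = \int_{\mathbb{R}^3}\big\langle\big(\partial_t\mathcal{B}^0 + \partial_j\mathcal{B}^j\big)W,\,W\big\rangle\,dx + 2\int_{\mathbb{R}^3}\langle\mathcal{K},W\rangle\,dx,
\]
whence $\frac{d}{dt}\mathcal{E}(t)\le C\Vert\partial\mathcal{B}\Vert_{L^\infty}\mathcal{E}(t) + 2\Vert\mathcal{K}(t)\Vert_{L^2}\mathcal{E}(t)^{1/2}$. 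Rewriting this as a differential inequality for $\mathcal{E}^{1/2}$ (after an $\epsilon$-regularization to avoid dividing by zero) yields $\frac{d}{dt}\mathcal{E}^{1/2}\le C\Vert\partial\mathcal{B}\Vert_{L^\infty}\mathcal{E}^{1/2} + \Vert\mathcal{K}\Vert_{L^2}$, and Gr\"onwall's inequality gives
\[
\mathcal{E}(t)^{1/2}\le \exp\!\Big(C\!\int_0^t\!\Vert\partial\mathcal{B}\Vert_{L^\infty}\,ds\Big)\Big(\mathcal{E}(0)^{1/2} + \int_0^t\!\Vert\mathcal{K}\Vert_{L^2}\,ds\Big).
\]
Passing back to $\Vert W\Vert_{L^2(\mathbb{R}^3)}$ through the comparability, taking $\sup_{0\le t\le T}$, bounding the exponential by $1 + C\int_0^T\Vert\partial\mathcal{B}\Vert_{L^\infty}\,dt$ on a short interval (or once the bootstrap bound $\mathcal{E}_\ell(T)\le C_1$ of \prop{main prop} is in force), and using $\int_0^T\Vert\mathcal{K}\Vert_{L^2}\,dt \lesssim 1 + \int_0^T\Vert\mathcal{K}\Vert_{L^2}^2\,dt$, one arrives at the stated bound.

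I do not expect a genuine obstacle here: this is the textbook symmetric-hyperbolic energy estimate. The only two points that need a little care are the uniform positive-definiteness of $\mathcal{B}^0$ — which rests on $-g^{00}>0$, i.e. on the frame staying close to the coordinate frame on $[0,T]$ — and the justification of the integration by parts in the weak setting across $\partial\Omega_t$, handled as indicated above.
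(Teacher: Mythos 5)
Your proposal is correct and is exactly the paper's argument: the paper's proof is the one-line "multiply the equation by $W$ and integrate," i.e.\ the standard symmetric-hyperbolic energy estimate that you carry out in full (symmetry of the $\mathcal{A}^{\tilde I}$, uniform positivity of $\mathcal{B}^0$, integration by parts, Gr\"onwall). Your additional care about the weak formulation across $\partial\Omega_t$ and the positivity of $\mathcal{B}^0$ only makes explicit what the paper leaves implicit.
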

   \begin{proof}
   	  Given that $W$ satisfies a first-order hyperbolic system, we shall proceed with the standard approach to derive the energy estimate. Specifically, the desired estimate from multiplying the equation by $W$ and integrating.
   \end{proof}
  \subsection{Higher order equations}\label{sec2.2}
  Through direct calculation, we establish commutator identities with the main linear operators, which hold for any scalar function $u$:
  \begin{equation}\label{commute D}
   \left[\partial_{t}, D_{I}\right]u=-\left(D_{I}\hat{\Theta}^{J}+\hat{\Theta}^{K}\Gamma_{IK}^{J}\right)D_{J}u.
  \end{equation}
  \begin{equation}\label{commute box}
  \begin{aligned}
  \left[\partial_{t}, \Box \right]u&=-\sum_{I}\epsilon_{I}\left(D_{I}\hat{\Theta}^{J}+\hat{\Theta}^{K}\Gamma_{IK}^{J}\right)\left(D_{I}D_{J}u+D_{J}D_{I}u\right)-\Box\hat{\Theta}^{J}D_{J}u+\sum_{J}\epsilon_{J}\left(\Gamma_{IJ}^{K}-\Gamma_{JI}^{K}\right)D_{J}\hat{\Theta}^{I}D_{K}u\\
  &-\sum_{I}\epsilon_{I}\hat{\Theta}^{K}D_{I}\Gamma_{IK}^{J}D_{J}u+\sum_{I}\Gamma_{II}^{K}\Gamma_{KM}^{J}\hat{\Theta}^{M}D_{J}u-\hat{\Theta}^{I}\mathrm{Ric}_{\ I}^{K}D_{K}u+\sum_{J}\epsilon_{J}\hat{\Theta}^{I}\Gamma_{MJ}^{K}\Gamma_{JI}^{M}D_{K}u.
  \end{aligned}
  \end{equation}
  \begin{equation}\label{commute n}
     \begin{aligned}
        \left[\partial_{t}, \partial_{t}^{2}+\gamma D_{n}\right]u&=\frac{\epsilon_{I}}{2\Vert V\Vert^{2}}\left(D_{I}\partial_{t}\Vert V\Vert^{2}\right)D_{I}u-\frac{\partial_{t}\Vert V\Vert^{2}}{\Vert V\Vert^{2}}\gamma D_{n}u-\frac{\epsilon_{I}}{2\Vert V\Vert^{2}}\left(D_{I}\hat{\Theta}^{J}+\Gamma_{IK}^{J}\hat{\Theta}^{K}\right)\left(D_{J}\Vert V\Vert^{2}\right)D_{I}u\\
        &-\frac{\epsilon_{I}}{2\Vert V\Vert^{2}}\left(D_{I}\hat{\Theta}^{J}+\Gamma_{IK}^{J}\hat{\Theta}^{K}\right)\left(D_{I}\Vert V\Vert^{2}\right)D_{J}u.
     \end{aligned}
  \end{equation}
  Using these identities we can calculate the higher order equations of $(\ref{main structure equ})$, which we will state in the following lemmas. 
   \begin{lemma}\label{V higher order lemma}
   	  For any $k>0$, $\partial_{t}^{k}\Theta^{I}$ satisfies
   	  \begin{equation*}
   	      \Box \partial_{t}^{k}\Theta^{I}=H_{k} \quad \text{in}\ \Omega,
   	  \end{equation*}
   	  where $H_{k}$ is a linear combination of $\Gamma$, $D\Gamma$, $\partial_{t}^{j_{1}}\Theta$, $D\partial_{t}^{j_{2}}\Theta$,  $D\partial_{t}^{j_{3}+1}\Vert V\Vert^{2}$, $D\partial_{t}^{j_{4}}\Vert V\Vert^{2}$, $\partial_{t}^{j_{5}}\Vert V\Vert^{2}$ $\partial_{t}^{j_{6}}R$, $D\partial_{t}^{j_{6}}\Vert V\Vert^{2}$ $D\partial_{t}^{j_{8}}R$ and $D^{2}\partial_{t}^{j_{9}}\Theta$, with $\sum j_{i}\le k$, and $j_{1}, j_{2}, \dots, j_{5}\le k$, $j_{6}, \dots, j_{9}\le k-1.$
   \end{lemma}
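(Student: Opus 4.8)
\textbf{Proof proposal for Lemma \ref{V higher order lemma}.}

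The plan is to argue by induction on $k$, starting from the interior wave equation $(\ref{2V interior frame equ})$ for $\Theta^{I}$, i.e. the $k=0$ case $\Box\Theta^{I}=\mathcal{F}_{\Theta^{I},\Omega}$ whose right-hand side is the explicitly-written source. First I would apply $\partial_{t}$ to the equation $\Box\partial_{t}^{k-1}\Theta^{I}=H_{k-1}$ and commute $\partial_{t}$ past $\Box$ using the commutator identity $(\ref{commute box})$. This produces $\Box\partial_{t}^{k}\Theta^{I}=\partial_{t}H_{k-1}-[\partial_{t},\Box]\partial_{t}^{k-1}\Theta^{I}$, and the task reduces to checking that both terms on the right fit the claimed schematic form. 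The commutator term $[\partial_{t},\Box]\partial_{t}^{k-1}\Theta^{I}$, by $(\ref{commute box})$, is a combination of the coefficients $\hat\Theta$, $D\hat\Theta$, $\Gamma$, $D\Gamma$, $\mathrm{Ric}$, the product $\Gamma\cdot\Gamma$, and of first and second frame-derivatives $D\partial_{t}^{k-1}\Theta$, $D^{2}\partial_{t}^{k-1}\Theta$ of the unknown; since $\mathrm{Ric}$ is expressed through $R$ (via $(\ref{Ricci equ})$, hence through $\Theta,\Vert V\Vert^2$ inside $\Omega$ — though keeping it as $\partial_t^{j}R$ with $j\le k-1$ is cleanest), and since here the differentiated-unknown terms carry exactly $k-1$ time derivatives, this matches the bookkeeping $j_{9}\le k-1$ for the $D^{2}\partial_{t}^{j_{9}}\Theta$ slot and $j_{2}\le k$ for $D\partial_{t}^{j_{2}}\Theta$.

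Next I would expand $\partial_{t}H_{k-1}$ term by term using the inductive description of $H_{k-1}$ together with the commutator $(\ref{commute D})$ to move $\partial_{t}$ through the frame derivatives $D_{I}$ (each such commutation costs a factor of $D\hat\Theta+\hat\Theta\Gamma$, i.e.\ lowers nothing in the $j$-count but inserts a coefficient), and the transport equations $(\ref{transport e})$, $(\ref{transport Ch})$, $(\ref{transport w})$ to rewrite $\partial_{t}e_{I}$, $\partial_{t}\Gamma$, $\partial_t\omega$ in terms of quantities already on the list. The only genuinely delicate points are the vorticity contribution $\epsilon_I\epsilon_K\nabla_K\omega_{KI}$ and the sound-speed/enthalpy terms $\frac{\epsilon_I}{G}(D_ID_VG + D_IG\,\nabla_J\Theta^J)$ appearing in $\mathcal{F}_{\Theta^I,\Omega}$. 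For the former: $\omega_{IJ}=\nabla_I\Theta_J-\nabla_J\Theta_I$ so $\nabla_K\omega_{KI}$ already contains $D^2\Theta$; differentiating $k$ times naively would give $D^2\partial_t^k\Theta$, which is \emph{not} on the list (only $j_9\le k-1$ is allowed). The resolution — and this is the point flagged in the "main ideas" subsection — is to use the transport equation $(\ref{transport w})$ for $\omega$: writing $\partial_t^{k}\nabla_K\omega_{KI}$ and peeling off one $\partial_t$ via $\mathcal L_V\omega=0$ trades a time derivative for a $D\partial_t^{k-1}\Theta$ (times coefficients) plus lower-order terms, so that after using the wave equation itself to substitute $\Box$-expressions, all second spatial derivatives of the unknown carry at most $k-1$ time derivatives. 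For the enthalpy terms one uses $(\ref{G equ})$, $\nabla_\mu G=G'\nabla_\mu\Vert V\Vert^2$, to convert $D_IG$, $D_VG$, and their $\partial_t$-derivatives into $D\partial_t^{j}\Vert V\Vert^2$ (and $D\partial_t^{j+1}\Vert V\Vert^2$) terms with $j\le k$, which are exactly the $D\partial_t^{j_3+1}\Vert V\Vert^2$, $D\partial_t^{j_4}\Vert V\Vert^2$ slots; the total-derivative-count constraint $\sum j_i\le k$ is then immediate by induction since each application of $\partial_t$ and each commutator distributes exactly one new time derivative among the factors.

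The main obstacle is precisely the vorticity book-keeping just described: one must verify, carefully and uniformly in $k$, that every second-order spatial derivative $D^{2}$ that lands on $\Theta$ does so with a time-order at most $k-1$, so that the energy method of Lemma \ref{V energy lemma} closes. I expect this to require (i) committing to a fixed schematic notation for "linear combination with smooth coefficients built from $\Theta$, $\Vert V\Vert^2$ with $\Vert V\Vert^2$ bounded below, $e_I$, $\Gamma$ and finitely many of their derivatives", and (ii) an inductive lemma-within-the-proof stating that $\partial_t^k$ applied to $\nabla_K\omega_{KI}$ equals $\Theta^J\Box\partial_t^{?}\Theta^L\omega_{LJ}$-type terms plus admissible lower-order remainder — exactly the relation sketched as $\partial_t^k(\tfrac{2\epsilon_K}{\Vert V\Vert}\Theta^J\nabla_V\nabla_K\omega_{KJ})\sim \partial_t^k(\Theta^J\Box\Theta^L\omega_{LJ})+\partial_t^{k-1}M$ in the introduction. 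Once that reduction is in hand, the remaining verification that $H_k$ has the stated form is a routine (if lengthy) induction using $(\ref{commute D})$, $(\ref{commute box})$ and the transport equations, with no further conceptual input.
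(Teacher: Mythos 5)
Your proposal is correct and follows essentially the same route as the paper: induction on $k$ via the commutator identity $(\ref{commute box})$, with the key step being exactly the one you flag — rewriting $\partial_{t}\nabla_{K}\omega_{KI}$ as $\tfrac{1}{\Vert V\Vert}([D_{V},\nabla_{K}]\omega_{KI}+\nabla_{K}D_{V}\omega_{KI})$ and using the transport equation $(\ref{transport w})$ so that the resulting $D^{2}\Theta$ carries one fewer time derivative. The only (harmless) divergence is that the paper does not need the $\Box$-substitution trick of $(\ref{main skill for higher order DV2})$ here — that device is reserved for the $\Vert V\Vert^{2}$ equation in Lemma \ref{V^2 higher order lemma} — whereas for $\Theta^{I}$ the commutator-plus-transport step alone already closes the count.
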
	
   \begin{proof}
   Let us complete the proof by induction on the order $k$ of $\partial_{t}^{k}$. For $k=1$, since
   \begin{equation*}
         \Box \partial_{t}\Theta^{I}=\partial_{t}\Box\Theta^{I}-\left[\partial_{t}, \Box\right]\Theta^{I},
   \end{equation*}
   using $(\ref{2V interior frame equ})$ and $(\ref{commute box})$, we need to concentrate on $\epsilon_{I}\epsilon_{K}\partial_{t}\nabla_{K}\omega_{KI}$, which can be written as 
   \begin{equation*}
       \epsilon_{I}\epsilon_{K}\partial_{t}\nabla_{K}\omega_{KI}=\frac{\epsilon_{I}\epsilon_{K}}{\Vert V\Vert}\left[D_{V}, \nabla_{K}\right]\omega_{KI}+\frac{\epsilon_{I}\epsilon_{K}}{\Vert V\Vert}\nabla_{K}D_{V}\omega_{KI}.
   \end{equation*}  
   Then using $(\ref{transport w})$ implies that $\epsilon_{I}\epsilon_{K}\partial_{t}\nabla_{K}\omega_{KI}$ is a linear combination of                                                                                                                    $R$, $D^{2}\Theta$ and $D\Theta$. Thus $H_{1}$ is a linear combination of $\Gamma$, $D\Gamma$, $D^{2}\Theta$, $D\partial_{t}\Theta$, $\partial_{t}\Theta$, $D\partial_{t}^{2}\Vert V\Vert^{2}$,   $D\partial_{t}\Vert V\Vert^{2}$, $D\Vert V\Vert^{2}$, $\partial_{t}\Vert V\Vert^{2}$, $R$ and $DR$. Suppose it holds for $k=j$, and let us prove it holds for $k=j+1$. By $(\ref{commute box})$, $\left[\partial_{t}, \Box\right]\partial_{t}^{j}\Theta^{I}$ has the right form. Similarly, $\partial_{t}$ applied to $H_{j}$ has the desired form.
   \end{proof}

	The equation for $\partial_{t}^{k}\Theta^{I}$ on $\partial\Omega$ is derived in the next lemma.
	\begin{lemma}\label{V boundary higher order lemma}
		For any $k>0$, $\partial_{t}^{k}\Theta^{I}$ satisfies
		\begin{equation*}
		\left(\partial_{t}^{2}+\gamma D_{n}\right)\partial_{t}^{k}\Theta^{I}=F_{k} \quad \text{on}\ \partial\Omega,
		\end{equation*}
		where $F_{k}$ is a linear combination of contractions of $\Gamma$, $D\partial_{t}^{j_{1}+1}\Vert V\Vert^{2}$, $D\partial_{t}^{j_{2}}\Vert V\Vert^{2}$,  $\partial_{t}^{j_{3}+1}\Theta$, $\partial_{t}^{j_{4}}\Theta$, $D\partial_{t}^{j_{5}}\Theta$ and $\partial_{t}^{j_{6}}R$, with $\sum j_{i}\le k$, and $j_{1}, j_{2}, \dots , j_{4}\le k$, $j_{5},j_{6}\le k-1.$
	\end{lemma}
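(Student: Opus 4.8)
\emph{Plan of proof.} The plan is to argue by induction on $k$, exactly parallel to the proof of \lem{V higher order lemma}, but using the boundary commutator $(\ref{commute n})$ in place of $(\ref{commute box})$. For $k=1$ I would take the base equation for $\Theta^{I}$ on $\partial\Omega$ — the first line of $(\ref{main structure equ})$, i.e. $(\ref{2V boundary equ})$ — apply $\partial_{t}$, and write $\left(\partial_{t}^{2}+\gamma D_{n}\right)\partial_{t}\Theta^{I}=\partial_{t}\mathcal{F}_{\Theta^{I},\partial\Omega}-\left[\partial_{t},\,\partial_{t}^{2}+\gamma D_{n}\right]\Theta^{I}$. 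The commutator is given in closed form by $(\ref{commute n})$: it is a contraction against $D\Theta^{I}$ of coefficients built from $\Gamma$, $\hat{\Theta}$, first frame derivatives $D\hat{\Theta}$, and $D\partial_{t}\Vert V\Vert^{2}$, $D\Vert V\Vert^{2}$; and its one term proportional to $\partial_{t}\Vert V\Vert^{2}$ (namely $\frac{\partial_{t}\Vert V\Vert^{2}}{\Vert V\Vert^{2}}\gamma D_{n}\Theta^{I}$) simply drops on $\partial\Omega$ because $\Vert V\Vert^{2}\equiv 1$ there. So the base case reduces to differentiating the right-hand side of $(\ref{2V boundary equ})$.

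\textbf{Differentiating the source, and why only $\Gamma$ appears.} In $(\ref{2V boundary equ})$ the term $D_{I}D_{V}\Vert V\Vert^{2}=D_{I}(\Vert V\Vert\,\partial_{t}\Vert V\Vert^{2})$ yields, after using $(\ref{commute D})$ to commute $\partial_{t}$ past $D_{I}$, a term $D\partial_{t}^{2}\Vert V\Vert^{2}$ plus lower order terms; the term $D_{K}\Vert V\Vert^{2}\,\Gamma_{KJ}^{I}\Theta^{J}$ yields $D\partial_{t}\Vert V\Vert^{2}$, $\partial_{t}\Gamma$ and $\partial_{t}\Theta$, where the time derivative of $\Gamma$ is converted by $(\ref{transport Ch})$ into a combination of $R$ (one order of $\partial_{t}$ lower), $\Gamma$, and $D\hat{\Theta}$; and the last term $\frac{1}{2\Vert V\Vert^{2}}(\partial_{t}\Vert V\Vert^{2})\partial_{t}\Theta^{I}$, together with all of its $\partial_{t}$-derivatives, vanishes on $\partial\Omega$ since $\partial_{t}$ is tangent to $\partial\Omega_{0}^{T}$ and $\partial_{t}^{m}\Vert V\Vert^{2}|_{\partial\Omega}=0$ for every $m\ge1$. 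Note that no frame derivative $D\Gamma$ and no second frame derivative $D^{2}\Theta$ is ever produced: this is precisely because the boundary equation $(\ref{2V boundary equ})$ — in contrast to the interior wave equation $(\ref{2V interior frame equ})$, which carries $D_{K}\Gamma_{KN}^{I}$ and $\nabla_{K}\omega_{KI}$ — involves the connection coefficients undifferentiated, which is the payoff of working with the frame components $\Theta^{I}$ rather than coordinate components. Collecting these gives $F_{1}$ in the stated form.

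\textbf{The vorticity term (the crux).} The single term that requires a genuine idea is $\frac{\epsilon_{I}\epsilon_{J}}{2\Vert V\Vert^{2}}\omega_{IJ}D_{J}\Vert V\Vert^{2}$. Since $\omega_{IJ}=\nabla_{I}\Theta_{J}-\nabla_{J}\Theta_{I}$ already carries one frame derivative of $\Theta$, blindly applying $\partial_{t}^{k}$ would produce $D\partial_{t}^{k}\Theta$ (exceeding $j_{5}\le k-1$). One must instead, each time a $\partial_{t}$ would land on $\omega$, replace it using $\partial_{t}\omega_{IJ}=\Vert V\Vert^{-1}\nabla_{V}\omega_{IJ}$ and the transport equation $(\ref{transport w})$, which expresses $\partial_{t}\omega$ as a contraction of $\nabla\Theta$ with $\omega$ — increasing the number of $\partial_{t}$'s on the $\Theta$-factors but never the number of frame derivatives. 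Iterating, $\partial_{t}^{m}\omega$ becomes a sum of products of factors $D\partial_{t}^{a}\Theta$ and $\Gamma\partial_{t}^{a}\Theta$ whose most-differentiated term is $D\partial_{t}^{m-1}\Theta\cdot\omega$; hence $\partial_{t}^{k}$ of the vorticity term stays within $D\partial_{t}^{j_{5}}\Theta$ with $j_{5}\le k-1$, together with $\Gamma$, $\partial_{t}^{j_{4}}\Theta$, $D\partial_{t}^{j_{2}}\Vert V\Vert^{2}$ and — via $(\ref{transport Ch})$ applied to the $\Gamma$'s — $\partial_{t}^{j_{6}}R$ with $j_{6}\le k-1$, all inside the asserted list. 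This is the same reduction flagged in the discussion preceding \prop{main prop}.

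\textbf{Inductive step and the main obstacle.} Assuming the claim at level $k$, I would differentiate once more: $\left(\partial_{t}^{2}+\gamma D_{n}\right)\partial_{t}^{k+1}\Theta^{I}=\partial_{t}F_{k}-\left[\partial_{t},\,\partial_{t}^{2}+\gamma D_{n}\right]\partial_{t}^{k}\Theta^{I}$. The commutator, via $(\ref{commute n})$, contributes terms of type $(D\partial_{t}\Vert V\Vert^{2})(D\partial_{t}^{k}\Theta^{I})$ and $(D\hat{\Theta}\ \text{or}\ \Gamma\hat{\Theta})(D\Vert V\Vert^{2})(D\partial_{t}^{k}\Theta^{I})$, which are of the form $D\partial_{t}^{j_{5}}\Theta$ with $j_{5}\le k$ as permitted at level $k+1$, while its $\partial_{t}\Vert V\Vert^{2}$-proportional term again vanishes on $\partial\Omega$; and $\partial_{t}F_{k}$ is handled term by term, each $\partial_{t}$ either raising one time-order by one (so that $\sum j_{i}\le k$ becomes $\sum j_{i}\le k+1$), or landing on a $\Gamma$ (use $(\ref{transport Ch})$), or landing on a $\Theta$-factor coming from an earlier expansion of $\omega$ (producing another $D\partial_{t}^{a}\Theta$ with $a\le k$), with $(\ref{commute D})$ used to pass $\partial_{t}$ through any $D_{I}$. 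The hard part is precisely this order-bookkeeping: one must check that the four tools $(\ref{commute D})$, $(\ref{commute n})$, $(\ref{transport w})$, $(\ref{transport Ch})$ jointly never produce a term outside the list — in particular no $D\Gamma$, no $D^{2}\Theta$, and no $\partial_{t}^{k}R$ at level $k$ — and that the index constraints $\sum j_{i}\le k$, $j_{1},\dots,j_{4}\le k$, $j_{5},j_{6}\le k-1$ survive each application of Leibniz's rule; the vorticity term is where these constraints are tightest, and it is controlled exactly as described above.
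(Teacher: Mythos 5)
Your proposal is correct and follows essentially the same route as the paper: induction on $k$, the commutator identity $(\ref{commute n})$ for the boundary operator, the transport equation $(\ref{transport w})$ to keep the vorticity contribution at the level $D\partial_{t}^{j_5}\Theta$ with $j_5\le k-1$, and $(\ref{transport Ch})$ to convert $\partial_t\Gamma$ into curvature terms. Your bookkeeping (in particular the observation that the $(\partial_t\Vert V\Vert^2)\partial_t\Theta^I$ term and all its time derivatives vanish on $\partial\Omega$) is actually more explicit than the paper's, which simply asserts the remaining terms have the desired form.
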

   \begin{proof}
       We also proceed inductively. For $k=1$, 	
      \begin{equation*}
       \left(\partial_{t}^{2}+\gamma D_{n}\right)\partial_{t}\Theta^{I}=\partial_{t}\left[\left(\partial_{t}^{2}+\gamma D_{n}\right)\Theta^{I}\right]-\left[\partial_{t}, \partial_{t}^{2}+\gamma D_{n}\right]\Theta^{I}.
      \end{equation*}
      The second term on the right-hand above has the right form using $(\ref{commute n})$. For the first term, $\partial_{t}$ applied to $\frac{\epsilon_{I}}{2\Vert V\Vert^{2}}D_{I}D_{V}\Vert V\Vert^{2}$ can be express as a linear combination of $D\partial_{t}^{2}\Vert V\Vert^{2}$, $D\Vert V\Vert^{2}$ and $D\partial_{t}\Vert V\Vert^{2}$. And $\partial_{t}$ applied to $\frac{\epsilon_{I}\epsilon_{J}}{2}\omega_{IJ}D_{J}\Vert V\Vert^{2}$ consists of $\partial_{t}D_{J}\Vert V\Vert^{2}$ and $D\Theta$ using $(\ref{transport w})$. Then $\partial_{t}$ applied to other terms of the first term are of the desired form, so it holds for $k=1$. Let us assume it holds for $k=j$ and prove it for $k=j+1$. Since we have 
      \begin{equation*}
         \left(\partial_{t}^{2}+\gamma D_{n}\right)\partial_{t}^{j+1}\Theta^{I}=\partial_{t}\left[\left(\partial_{t}^{2}+\gamma D_{n}\right)\partial_{t}^{j}\Theta^{I}\right]-\left[\partial_{t}, \partial_{t}^{2}+\gamma D_{n}\right]\partial_{t}^{j}\Theta^{I},
      \end{equation*}
     it has the desired form in view of $(\ref{commute n})$ and $\partial_{t}$ applied to $F_{j}$.
   \end{proof}

	The wave equation for $\partial_{t}^{k+1}\Vert V\Vert^{2}$ is derived in the next lemma.
	\begin{lemma}\label{V^2 higher order lemma}
	    For any $k>0$, $\partial_{t}^{k+1}\Vert V\Vert^{2}$ satisfies
	    \begin{equation*}
	        \Box \partial_{t}^{k+1}\Vert V\Vert^{2}=\frac{2G^{'}\Vert V\Vert^{2}}{G}\partial_{t}^{k+3}\Vert V\Vert^{2}+H_{k}, \quad \text{in}\ \Omega,
	    \end{equation*}
	    where $H_{k}$ is a linear combination of $\Gamma$, $D\Gamma$, $\partial_{t}^{j_{1}+2}\Vert V\Vert^{2}$, $\partial_{t}^{j_{2}+1}\Vert V\Vert^{2}$, $\partial_{t}^{j_{3}}\Vert V\Vert^{2}$, $D\partial_{t}^{j_{4}}\Vert V\Vert^{2}$, $D\partial_{t}^{j_{5}+1}\Vert V\Vert^{2}$, $D^{2}\partial_{t}^{j_{6}}\Vert V\Vert^{2}$, $\partial_{t}^{j_{7}}\Theta$, $D\partial_{t}^{j_{8}}\Theta$, $\partial_{t}^{j_{9}}R$, $D\partial_{t}^{j_{10}}\Theta$, $D\partial_{t}^{j_{11}}R$ and $D^{2}\partial_{t}^{j_{12}}\Theta$ with $\sum j_{i}\le k$, and $j_{1},\dots,j_{9}\le k$, and $j_{10}, j_{11}, j_{12}\le k-1$.
	\end{lemma}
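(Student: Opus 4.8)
The plan is to argue by induction on $k$, in parallel with the proofs of Lemmas~\ref{V higher order lemma} and \ref{V boundary higher order lemma}. The starting point is the structural wave equation $(\ref{2DV^2 frame equ})$ for $\partial_{t}\Vert V\Vert^{2}$. First I would expand the term $\frac{2}{G\Vert V\Vert}D_{V}^{3}G$ using $(\ref{G equ})$, i.e. $D_{V}G=G'D_{V}\Vert V\Vert^{2}$, so that $D_{V}^{3}G=G'D_{V}^{3}\Vert V\Vert^{2}$ plus terms in which $G''$ or $G'''$ multiply at most two factors of $D_{V}$ hitting $\Vert V\Vert^{2}$; since $D_{V}=\Vert V\Vert\,\partial_{t}$ on scalars, $D_{V}^{3}\Vert V\Vert^{2}=\Vert V\Vert^{3}\partial_{t}^{3}\Vert V\Vert^{2}+(\text{terms with}\le 2\ \partial_{t}\text{'s on }\Vert V\Vert^{2})$, so $(\ref{2DV^2 frame equ})$ rewrites schematically as
\begin{equation*}
\Box\,\partial_{t}\Vert V\Vert^{2}=\frac{2G'\Vert V\Vert^{2}}{G}\,\partial_{t}^{3}\Vert V\Vert^{2}+H_{0},
\end{equation*}
where all remaining terms on the right of $(\ref{2DV^2 frame equ})$ -- the Ricci, vorticity, curvature and connection contributions, together with the lower-order $G$-terms just isolated -- are collected into $H_{0}$, which one checks to be of the stated type for $k=0$.

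For the inductive step (the base case $k=1$ being the first instance), I would write $\Box\,\partial_{t}^{k+1}\Vert V\Vert^{2}=\partial_{t}\bigl(\Box\,\partial_{t}^{k}\Vert V\Vert^{2}\bigr)-[\partial_{t},\Box]\,\partial_{t}^{k}\Vert V\Vert^{2}$ and treat the three resulting pieces. The commutator $[\partial_{t},\Box]\,\partial_{t}^{k}\Vert V\Vert^{2}$ has the claimed form by $(\ref{commute box})$, together with $(\ref{commute D})$ for the individual $D_{I}$'s it produces: it contributes only $\Gamma$, $D\Gamma$, $\mathrm{Ric}$, one frame derivative of $\hat{\Theta}$ (hence of $\Theta$ and $\Vert V\Vert^{2}$), and at most two frame derivatives of $\partial_{t}^{k}\Vert V\Vert^{2}$, i.e. a term of type $D^{2}\partial_{t}^{j_{6}}\Vert V\Vert^{2}$ with $j_{6}\le k$, plus the curvature and lower-order fluid terms coming from $\Box\hat{\Theta}$. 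Applying $\partial_{t}$ to the inductive leading term $\frac{2G'\Vert V\Vert^{2}}{G}\partial_{t}^{k+2}\Vert V\Vert^{2}$ reproduces $\frac{2G'\Vert V\Vert^{2}}{G}\partial_{t}^{k+3}\Vert V\Vert^{2}$ together with $\partial_{t}\!\bigl(\frac{2G'\Vert V\Vert^{2}}{G}\bigr)\partial_{t}^{k+2}\Vert V\Vert^{2}$, whose coefficient is a smooth function of $\Vert V\Vert^{2}$, so the latter is absorbed as a term $\partial_{t}^{j_{1}+2}\Vert V\Vert^{2}$ with $j_{1}=k$. Finally $\partial_{t}H_{k-1}$ raises each order index by at most one while preserving the relative hierarchy, so it lands in $H_{k}$; checking each summand against the list is routine bookkeeping.

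The delicate point -- and what I expect to be the main obstacle -- is the vorticity contribution. Through the right-hand side of $(\ref{2DV^2 frame equ})$ one meets terms built from $\frac{2\epsilon_{K}}{\Vert V\Vert}\Theta^{J}\nabla_{V}\nabla_{K}\omega_{KJ}$ and $\frac{2\epsilon_{K}}{\Vert V\Vert}\nabla_{V}\Theta^{J}\nabla_{K}\omega_{KJ}$; since $\omega_{KJ}=\nabla_{K}\Theta_{J}-\nabla_{J}\Theta_{K}$, naively $\nabla_{V}\nabla_{K}\omega_{KJ}$ sits at the level of $D^{3}\Theta$, which after $k$ time differentiations would give an uncontrollable $D^{2}\partial_{t}^{k}\Theta$. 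The resolution is that $\omega$ solves the transport equation $(\ref{transport w})$: writing $\partial_{t}=\frac{1}{\Vert V\Vert}D_{V}$, commuting $D_{V}$ through $\nabla_{K}$ via $[D_{V},\nabla_{K}]$ (curvature plus connection terms), and using $(\ref{transport w})$ to replace every $\nabla_{V}\omega$ by $(\nabla\Theta)\cdot\omega$, one trades each $D_{V}$ landing on $\omega$ for terms of strictly lower $\partial_{t}$-order. After this reduction the surviving top-order fluid term from the vorticity is only $D^{2}\partial_{t}^{\le k-1}\Theta$, accompanied by $\partial_{t}^{\le k}R$, one-derivative curvature $D\partial_{t}^{\le k-1}R$, and $D^{2}\partial_{t}^{\le k}\Vert V\Vert^{2}$ -- precisely the entries $D^{2}\partial_{t}^{j_{12}}\Theta$, $\partial_{t}^{j_{9}}R$, $D\partial_{t}^{j_{11}}R$, $D^{2}\partial_{t}^{j_{6}}\Vert V\Vert^{2}$ recorded in the statement. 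The same $\omega$-reduction is carried along at each step of the induction, and it is exactly the structural feature used in the energy estimate of the next section.
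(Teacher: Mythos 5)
Your proposal is correct and follows essentially the same route as the paper: induction on $k$ via $\Box\partial_{t}^{k+1}\Vert V\Vert^{2}=\partial_{t}\Box\partial_{t}^{k}\Vert V\Vert^{2}-[\partial_{t},\Box]\partial_{t}^{k}\Vert V\Vert^{2}$, isolating the leading $\frac{2G'\Vert V\Vert^{2}}{G}\partial_{t}^{k+3}\Vert V\Vert^{2}$ from $\partial_{t}^{k}\bigl(\frac{2}{G\Vert V\Vert}D_{V}^{3}G\bigr)$, and taming the vorticity term by commuting $\nabla_{V}$ past $\nabla_{K}$ and substituting the transport equation $(\ref{transport w})$, exactly as in the paper's identity $(\ref{main skill for higher order DV2})$. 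The delicate point you single out (the $\omega$-reduction preventing an uncontrollable $D^{2}\partial_{t}^{k}\Theta$) is precisely the paper's "key observation," so no further commentary is needed.
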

    \begin{proof}
    	We also proceed inductively. For $k=1$, by direct calculation gives
    	\begin{equation*}
    	      \Box \partial_{t}^{2}\Vert V\Vert^{2}=\partial_{t}\Box \partial_{t}\Vert V\Vert^{2}-\left[\partial_{t}, \Box\right]\partial_{t}\Vert V\Vert^{2}.
    	\end{equation*}
    	The last term on the right-hand side above follows from applying $u=\partial_{t}\Vert V\Vert^{2}$ to $(\ref{commute box})$. Using $(\ref{2DV^2 frame equ})$, $\partial_{t}\left(\frac{1}{G\Vert V\Vert}D_{V}^{3}G\right)$ can be manipulated into
    	\begin{equation*}
    	   \begin{aligned}
    	       \partial_{t}\left(\frac{2}{G\Vert V\Vert}D_{V}^{3}G\right)&=\frac{2G^{'}\Vert V\Vert^{2}}{G}\partial_{t}^{4}\Vert V\Vert^{2}+\partial_{t}\left(\frac{2G^{'}\Vert V\Vert^{2}}{G}\right)\partial_{t}^{3}\Vert V\Vert^{2}+\partial_{t}\left(\frac{2}{G}\right)\partial_{t}\left[\Vert V\Vert\partial_{t}\left(G^{'}\Vert V\Vert\right)\partial_{t}\Vert V\Vert^{2}\right]\\
    	       &+\frac{2}{G}\partial_{t}^{2}\left[\Vert V\Vert\partial_{t}\left(G^{'}\Vert V\Vert\right)\partial_{t}\Vert V\Vert^{2}\right]+\partial_{t}\left[\frac{2}{G}\partial_{t}\left(G\Vert V\Vert^{2}\right)\partial_{t}^{2}\Vert V\Vert^{2}\right].
    	   \end{aligned}
    	\end{equation*}
     It is obvious that the last four terms of the right-hand side above have the desired form. Next we will consider the term $\partial_{t}\left(\frac{2\epsilon_{K}}{\Vert V\Vert}\Theta^{J}\nabla_{V}\nabla_{K}\omega_{KJ}\right)$. Since
     \begin{equation}\label{main skill for higher order DV2}
       \begin{aligned}
         \epsilon_{K}\Theta^{J}\nabla_{V}\nabla_{K}\omega_{KJ}&=\epsilon_{K}\Theta^{J}\nabla_{K}\nabla_{V}\omega_{KJ}+\epsilon_{K}\Theta^{J}\left[\nabla_{V}, \nabla_{K}\right]\omega_{KJ}\\
         &=-\epsilon_{K}\Theta^{J}\nabla_{K}\left[\left(\nabla_{K}\Theta^{L}\right)\omega_{LJ}+\left(\nabla_{J}\Theta^{L}\right)\omega_{KL}\right]+\epsilon_{K}\Theta^{J}\left[\nabla_{V}, \nabla_{K}\right]\omega_{KJ}\\
         &=-\Theta^{J}\Box\Theta^{L}\omega_{LJ}-\epsilon_{K}\Theta^{J}\left(\nabla_{K}\omega_{LJ}\right)\left(\nabla_{K}\Theta^{L}\right)-\epsilon_{K}\nabla_{K}D_{V}\Theta^{L}\omega_{KL}\\
         &+\epsilon_{K}\left(\nabla_{K}\Theta^{J}\right)\left(\nabla_{J}\Theta^{L}\right)\omega_{KL}-\epsilon_{K}\Theta^{J}\nabla_{K}\omega_{KL}\left(\nabla_{J}\Theta^{L}\right)+\epsilon_{K}\Theta^{J}\left[\nabla_{V}, \nabla_{K}\right]\omega_{KJ}.
       \end{aligned}
     \end{equation} 
     By $(\ref{alongu})$ and $(\ref{transport w})$, $\partial_{t}$ applied to the above identity has the right form. Therefore, the conclusion holds for $k=1$. Now suppose that the result holds for $k=j$, and let us prove it for $k=j+1$. By $(\ref{commute box})$, the commutator $\left[\partial_{t}, \Box \right]\partial_{t}^{j}\Vert V\Vert^{2}$ is of the desired form. Similarly, $\partial_{t}$ applied to $H_{j}$ and $\frac{2G^{'}\Vert V\Vert^{2}}{G}\partial_{t}^{k+3}\Vert V\Vert^{2}$ also has the right form.
    \end{proof}

     Then we record the equation for $W$ by using $(\ref{I component1})$, $(\ref{I component2})$, $(\ref{J component})$ and $(\ref{hyperbolic system})$.
     \begin{lemma}\label{W higher order lemma}
     	 $\partial_{t}^{k}W$ satisfies
     	\begin{equation*}
     	   \mathcal{B}^{\mu}\partial_{\mu}\partial_{t}^{k}W=F_{k},
     	\end{equation*}
     	where $F_{k}$ is a linear combination of contractions of $\Gamma$, $D\Gamma$, $\mathrm{e}$, $D\partial_{t}^{j_{1}}\Vert V\Vert^{2}$, $\partial_{t}^{j_{2}}\Theta$, $D\partial_{t}^{j_{3}}\Theta$, $\partial_{t}^{j_{4}}R$, $D^{2}\partial_{t}^{j_{5}}\Vert V\Vert^{2}$, $D\partial_{t}^{j_{6}}W$, with $\sum j_{i}\le k$, and $j_{1},\dots,j_{5}\le k$, and $j_{6}\le k-1$.
     \end{lemma}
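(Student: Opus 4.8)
We argue by induction on $k$, following the template of the proofs of Lemmas~\ref{V higher order lemma}--\ref{V^2 higher order lemma}. The case $k=0$ is just a reading of the source $\mathcal{K}$ off the explicit formulas $(\ref{I component1})$, $(\ref{I component2})$, $(\ref{J component})$ together with $(\ref{hyperbolic system})$: the boundary-tangent fields $X_A,X_B$ and their frame derivatives $D_JX_A$ are smooth coefficients built from $e$ and $\Gamma$ (their Lagrangian-coordinate components being fixed on the product boundary $\partial\Omega_0^T$); $\omega_{IJ}=\nabla_I\Theta_J-\nabla_J\Theta_I$ is a contraction of $D\Theta$ and $\Gamma\cdot\Theta$; the matter terms $\nabla_{X_A}\Vert V\Vert^2$, $\nabla_{X_A}p$ and $G'\nabla_{X_A}\Vert V\Vert^2$ are contractions of $D\Vert V\Vert^2$ with smooth coefficients, since $p$ and $G$ (hence $G'$) are smooth functions of $\Vert V\Vert^2$; the contractions $\Gamma\cdot F$ are $\Gamma$ times components of $W$; and the remaining $R_{JKLI}$-terms are components of the curvature $R$. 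Thus $\mathcal{K}$ lies in the stated list with $k=0$ (no $D^2\Vert V\Vert^2$, no $DW$).

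For the inductive step, assume the claim for $k$ and write
\begin{equation*}
\mathcal{B}^\mu\partial_\mu\partial_t^{k+1}W
=\partial_t\bigl(\mathcal{B}^\mu\partial_\mu\partial_t^{k}W\bigr)-(\partial_t\mathcal{B}^\mu)\partial_\mu\partial_t^{k}W
=\partial_t F_k-(\partial_t\mathcal{B}^\mu)\partial_\mu\partial_t^{k}W .
\end{equation*}
In $\partial_t F_k$ one distributes $\partial_t$ over the factors of $F_k$: time derivatives of $e$, $\Gamma$ and $\omega$ are eliminated through the transport equations $(\ref{transport e})$, $(\ref{transport Ch})$, $(\ref{transport w})$ (which reproduce $e$, $\Gamma$, $D\Theta$, $\Theta$, $R$ and $\omega$), and $\partial_t$ is commuted past each $D_I$ via $(\ref{commute D})$ (used also with $I=0$, recalling that $\mathcal{B}^\mu\partial_\mu=D_0+\mathcal{A}^{\tilde{I}}D_{\tilde{I}}$ with the $\mathcal{A}^{\tilde{I}}$ constant); each such operation raises one index $j_i$ by one while keeping $\sum j_i\le k+1$ and the spatial-derivative counts inside the budget for $F_{k+1}$. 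The coefficient $\partial_t\mathcal{B}^\mu$ is, by $(\ref{transport e})$, an admissible contraction of $\Theta$, $\partial_t\Theta$, $D\Theta$, $\Gamma$, $e$ and $\Vert V\Vert^2$-quantities, and the coordinate derivatives $\partial_\mu\partial_t^kW$ are rewritten in terms of the frame derivatives $D_I\partial_t^kW=e_I^\nu\partial_\nu\partial_t^kW$, the matrix $(e_I^\mu)$ being invertible and $O(1)$-close to the identity.

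The point that goes beyond bookkeeping, and where I expect the real work, is that $\mathcal{B}^\mu\partial_\mu=D_0+\mathcal{A}^{\tilde{I}}D_{\tilde{I}}$ is adapted to the frame vector $e_0$ rather than to $\partial_t$, so the manipulation above produces $D_0\partial_t^kW=e_0^0\,\partial_t^{k+1}W+e_0^j\,\partial_j\partial_t^kW$, carrying one extra time derivative of $W$. One handles this by noting that the components of $\partial_t^{k+1}W$ are components of $\partial_t^{k+1}R$, which is admissible for $F_{k+1}$ (this is exactly why the statement permits $\partial_t^{j_4}R$ at the top order but $D\partial_t^{j_6}W$ only one order below), while the remaining piece $e_0^j\partial_j\partial_t^kW$ is of the type $D\partial_t^kW$, again within the $F_{k+1}$ budget; alternatively one may keep the curvature in the form $W$ by using the equation at level $k$, $\mathcal{B}^0\partial_t^{k+1}W=F_k-\mathcal{B}^j\partial_j\partial_t^kW$, together with invertibility of the $6\times6$ matrix $\mathcal{B}^0=e_0^0I+e_{\tilde{I}}^0\mathcal{A}^{\tilde{I}}$ (close to $I$ for $T$ small) to solve for the top-order time derivative of $W$ in terms of $D\partial_t^kW$-type quantities and $F_k$. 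Finally, since the Bianchi system for $W$ is only weakly defined across $\partial\Omega$, all of these manipulations are performed in the distributional sense on $\Sigma=\Omega\cup\Omega^c$, which is legitimate because the quantities and coefficients involved have, by $(\ref{regularity conditions})$, the requisite Sobolev regularity on each side. This closes the induction.
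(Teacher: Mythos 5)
Your proposal is correct and follows essentially the same route as the paper's (very brief) proof: commute $\partial_t$ with the symmetric hyperbolic system, absorb $\partial_t\mathcal{B}^\mu$ and the time derivatives of $e$, $\Gamma$, $\omega$ appearing in $\partial_t\mathcal{K}$ via the transport equations $(\ref{transport e})$, $(\ref{transport Ch})$, $(\ref{transport w})$, and induct. Your additional observations — that $(\partial_t\mathcal{B}^0)\partial_t^{k+1}W$ is admissible because $\partial_t^{k+1}W$ can be recast as $\partial_t^{k+1}R$-contractions (or as $D\partial_t^kW$ via $\partial_t=\hat\Theta^ID_I$), and that the spatial piece $(\partial_t\mathcal{B}^j)\partial_j\partial_t^kW$ accounts for the $D\partial_t^{j_6}W$ entry with $j_6\le k$ — correctly fill in the bookkeeping that the paper leaves implicit.
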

     \begin{proof}
     	We begin by commuting the time derivative operator $\partial_{t}$ with the equation $\mathcal{B}^{\mu}\partial_{\mu}W = \mathcal{K}$ in (\ref{hyperbolic system}). Since $\mathcal{B}^{\mu}$ involves the frame components $e_{I}^{\mu}$ and the constant coefficient matrices $\mathcal{A}^{I}$, the term $\partial_{t}\mathcal{B}^{\mu}$ takes the desired structure by applying the transport equation (\ref{transport e}) for $e_{I}$. On the right-hand side, we denote $\partial_{t}\mathcal{K}$ as $F_{1}$, which similarly acquires the desired form by using (\ref{I component1}), (\ref{I component2}), (\ref{J component}) and the transport equation (\ref{transport Ch}) for $\Gamma$. By proceeding inductively, we can extend this argument to establish the desired result for higher-order derivatives.
     \end{proof}
     
     Finally, the following lemma can be used to control $\partial_{t}^{k}\Theta$ on the boundary. 
     \begin{lemma}\label{Theta boundary control}
     	  For any $k>0$, $\partial_{t}^{k}\Theta$ satisfies
     	  \begin{equation*}
     	        \Box \partial_{t}^{k}\Theta= H, \quad \text{in}\ \Omega.
     	  \end{equation*}
     	Here $H$ is as defined in Lemma $\ref{V higher order lemma}$. Then there is a future directed timelike vectorfield $Q=\partial_{t}+\alpha n,\ \alpha>0$, such that
     	  \begin{equation*}
     	      \begin{aligned}
     	          &\int_{\Omega_{T}}\left(-Q\partial_{t}^{k}\Theta g^{0\alpha}\partial_{\alpha}\partial_{t}^{k}\Theta+\frac{Q^{0}}{2}g^{\alpha\beta}\partial_{\alpha}\partial_{t}^{k}\Theta\partial_{\beta}\partial_{t}^{k}\Theta\right)\sqrt{\abs{g}}\mathrm{d}x-\int_{0}^{T}\int_{\partial\Omega_{t}}\left(Q\partial_{t}^{k}\Theta D_{n}\partial_{t}^{k}\Theta-\frac{\alpha}{2}g^{\alpha\beta}\partial_{\alpha}\partial_{t}^{k}\Theta\partial_{\beta}\partial_{t}^{k}\Theta\right)\sqrt{\abs{g}}\mathrm{d}S\mathrm{d}t\\
     	          &=\int_{\Omega_{0}}\left(-Q\partial_{t}^{k}\Theta g^{0\alpha}\partial_{\alpha}\partial_{t}^{k}\Theta+\frac{Q^{0}}{2}g^{\alpha\beta}\partial_{\alpha}\partial_{t}^{k}\Theta\partial_{\beta}\partial_{t}^{k}\Theta\right)\sqrt{\abs{g}}\mathrm{d}x-\int_{0}^{T}\int_{\Omega_{t}}H\partial_{t}^{k}\Theta\sqrt{\abs{g}}\mathrm{d}x\mathrm{d}t\\
     	          &-\int_{0}^{T}\int_{\Omega_{t}}\left(g^{\alpha\beta}\left(\partial_{\alpha}Q^{\mu}\right)\partial_{\mu}\partial_{t}^{k}\Theta\partial_{\beta}\partial_{t}^{k}\Theta-\frac{1}{2}\partial_{\mu}\left(\sqrt{\abs{g}}g^{\alpha\beta}Q^{\mu}\right)\partial_{\alpha}\partial_{t}^{k}\Theta\partial_{\beta}\partial_{t}^{k}\Theta\right)\mathrm{d}x\mathrm{d}t.
     	      \end{aligned}
     	  \end{equation*} 
     \end{lemma}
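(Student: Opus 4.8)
The plan is to run the standard wave-equation multiplier argument for $\Box\partial_{t}^{k}\Theta=H$, exactly in parallel with the proof of Lemma~\ref{V^2 energy lemma}, but now with the multiplier $Q=\partial_{t}+\alpha n$ (note the sign of the normal term is opposite to that lemma) and without imposing any boundary condition on $\partial_{t}^{k}\Theta$. Since Lemma~\ref{V higher order lemma} already gives $\Box\partial_{t}^{k}\Theta=H$ in $\Omega$ with $H$ of the stated form, the only content of the present lemma is the identity itself. Set $u:=\partial_{t}^{k}\Theta$, fix a constant $\alpha\in(0,1)$, and extend the outward unit normal $n$ off $\partial\Omega$ into $\Omega$ in the usual way (for instance by $-a^{-1}\nabla\Vert V\Vert^{2}$, which is a unit vector field in a collar of $\partial\Omega$ by the Taylor sign condition in $(\ref{compatibility conditions})$ and may then be cut off away from $\partial\Omega$), so that $Q^{\mu}$ is defined throughout the bulk integrals. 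For $\alpha\in(0,1)$ one has $g(Q,Q)=-1+\alpha^{2}<0$ and $Q^{0}=1+\alpha n^{0}>0$, so $Q$ is future-directed timelike; this is exactly what will later make the $\Omega_{T}$-integral below control $\Vert\partial\partial_{t}^{k}\Theta\Vert_{L^{2}(\Omega_{T})}^{2}$.

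The first step is to multiply $\Box u=H$ by $(Qu)\sqrt{\abs{g}}$, apply the pointwise identity $(\ref{energy identity})$, and integrate over $[0,T]\times\Omega_{t}$. With $P^{\mu}:=(Qu)\nabla^{\mu}u-\tfrac12 Q^{\mu}g^{\alpha\beta}\partial_{\alpha}u\,\partial_{\beta}u$, the divergence theorem $\int\nabla_{\mu}P^{\mu}\sqrt{\abs{g}}\,\mathrm{d}t\,\mathrm{d}x=\int\partial_{\mu}(\sqrt{\abs{g}}P^{\mu})\,\mathrm{d}t\,\mathrm{d}x$ splits the total flux into: the top and bottom slices $\Omega_{T}$ and $\Omega_{0}$, which produce the two bulk integrals written in the statement (each equal to $-\int\sqrt{\abs{g}}P^{0}$ over the respective slice, since $-P^{0}=-(Qu)g^{0\beta}\partial_{\beta}u+\tfrac12 Q^{0}g^{\alpha\beta}\partial_{\alpha}u\,\partial_{\beta}u$); and the lateral boundary $\partial\Omega_{0}^{T}=[0,T]\times\partial\Omega_{t}$. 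The one point that deserves a remark is that, because $\Omega=[0,T]\times\Omega_{0}$ has the product form $(\ref{fluid domain equ})$, the vector $\partial_{t}$ is tangent to $\partial\Omega$, so $g(Q,n)=g(\partial_{t},n)+\alpha g(n,n)=\alpha$ there, and hence the lateral integrand is $P^{\mu}n_{\mu}=(Qu)(D_{n}u)-\tfrac{\alpha}{2}g^{\alpha\beta}\partial_{\alpha}u\,\partial_{\beta}u$. Moving this flux to the left-hand side produces the second term on the left of the statement. Unlike in Lemma~\ref{V^2 energy lemma}, where the Dirichlet condition annihilates the tangential derivatives on $\partial\Omega$ and collapses this flux to $-\tfrac{\alpha}{2}(D_{n}u)^{2}$, here no boundary condition is used in this identity, so the full boundary flux is kept; its tangential part is to be handled, when the identity is invoked, via the boundary wave equation of Lemma~\ref{V boundary higher order lemma}.

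Finally I would collect the interior contributions: the right-hand side $H$ enters paired with the multiplier $Qu$, contributing the interior integral of $H\,Qu$, and the deformation terms $\tfrac12(\nabla_{\mu}Q^{\mu})g^{\alpha\beta}\partial_{\alpha}u\,\partial_{\beta}u-(\nabla^{\mu}Q^{\alpha})\partial_{\mu}u\,\partial_{\alpha}u$ from $(\ref{energy identity})$ reorganize, after expanding $\nabla_{\mu}P^{\mu}=\abs{g}^{-1/2}\partial_{\mu}(\sqrt{\abs{g}}P^{\mu})$ in coordinates and using $\nabla g=0$ to rearrange the Christoffel symbols, into the last interior integral $-\int_{0}^{T}\int_{\Omega_{t}}\big(g^{\alpha\beta}(\partial_{\alpha}Q^{\mu})\partial_{\mu}u\,\partial_{\beta}u-\tfrac12\partial_{\mu}(\sqrt{\abs{g}}g^{\alpha\beta}Q^{\mu})\partial_{\alpha}u\,\partial_{\beta}u\big)\,\mathrm{d}x\,\mathrm{d}t$ of the statement. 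Assembling the three groups of terms yields the claimed identity. The whole lemma is a bookkeeping exercise in integration by parts, so there is no genuine obstacle; the only things needing a little care are the off-boundary extension of $n$ used to define $Q$ in the bulk, and the choice of the constant $\alpha\in(0,1)$ keeping $Q$ future-directed timelike while staying positive — the latter being precisely the arrangement that makes the boundary term in this identity usable to control $\partial_{t}^{k}\Theta$ (in particular $D_{n}\partial_{t}^{k}\Theta$) on $\partial\Omega$ in the a priori estimates.
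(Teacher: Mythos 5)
Your proposal is correct and is essentially the paper's own argument: the paper proves this lemma in one line by integrating the multiplier identity $(\ref{energy identity})$ with $u=\partial_{t}^{k}\Theta$ and $Q=\partial_{t}+\alpha n$, which is exactly the computation you carry out in detail. Your additional remarks on the off-boundary extension of $n$, the tangency of $\partial_{t}$ to $\partial\Omega$ giving $g(Q,n)=\alpha$, and the role of $\alpha\in(0,1)$ are consistent with the paper and merely make explicit what it leaves implicit.
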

   \begin{proof}
   	 This follow from integrating the identity $(\ref{energy identity})$, where $u=\partial_{t}^{k}\Theta$.
   \end{proof}
   
   \subsection{Elliptic estimates}\label{sec2.3}
    To prove Proposition \ref{main prop}, we first establish the following elliptic estimates. These estimates are crucial for obtaining both the $L^{\infty}$ bounds for the lower-order terms and the $L^{2}$ control of the derivatives for some next to top order terms. 
   \begin{proposition}\label{elliptic prop}
   	   Under the assumptions of Proposition $\ref{main prop}$, for any $t\in [0,T]$
   	   \begin{equation*}
   	         \sum_{2p+k\le \ell+2}\left(\Vert \partial^{p}\partial_{t}^{k}\Theta\Vert_{L^{2}\left(\Omega_{t}\right)}^{2}+\Vert \partial^{p}\partial_{t}^{k+1}\Vert V\Vert^{2}\Vert_{L^{2}\left(\Omega_{t}\right)}^{2}\right)\lesssim \mathcal{E}_{\ell}(t).
   	   \end{equation*} 
   	   Furthermore, 
   	   \begin{equation*}
   	        \Vert \partial\partial_{t}^{k}R\Vert_{L^{2}\left(\Omega_{t}\right)}^{2}+\Vert \partial\partial_{t}^{k}R\Vert_{L^{2}\left(\Omega_{t}^{c}\right)}^{2}\lesssim \mathcal{E}_{\ell}(t), \quad k\le \ell-1,
   	   \end{equation*}
   	   and 
   	   \begin{equation*}
   	      \sum_{2p+k\le \ell}\left(\Vert \partial^{p}\partial_{t}^{k}R\Vert_{L^{2}\left(\Omega_{t}\right)}^{2}+\Vert \partial^{p}\partial_{t}^{k}R\Vert_{L^{2}\left(\Omega_{t}^{c}\right)}^{2}\right)\lesssim \mathcal{E}_{\ell}(t). 
   	   \end{equation*}
   \end{proposition}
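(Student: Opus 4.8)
The plan is to exploit the fact that each hyperbolic-type subsystem in $(\ref{main structure equ})$ is \emph{spatially elliptic} once the pure time derivatives are moved to the source side: the wave operators $\Box$ and $\overline\Box$ have positive-definite spatial symbol $g^{ij}$ (the slices $\{x^0=t\}$ are spacelike), and the Maxwell system for the curvature has div--curl structure. So two spatial derivatives can be traded for two time derivatives (one for one in the first-order curvature system) modulo lower-order terms, and iterating this trade while feeding in the higher-order equations of Subsection \ref{sec2.2} should convert the purely time-differentiated control inside $\mathcal{E}_\ell(t)$ into the mixed space--time control on the left-hand sides. The argument is one induction on the derivative order, with the fluid and curvature estimates interleaved.

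First I would record, uniformly in $t\in[0,T]$, the elliptic ingredients. Since $\Omega_t$ is diffeomorphic to the unit ball and, under $\mathcal{E}_\ell(T)\le C_1$ with $T$ small, the metric stays uniformly close to its initial value, $g^{ij}$ is uniformly elliptic with coefficients of the needed regularity; I use the interior--boundary estimate $\Vert u\Vert_{H^{s+2}(\Omega_t)}\lesssim\Vert g^{ij}\partial_i\partial_j u\Vert_{H^{s}(\Omega_t)}+(\text{boundary data})+\Vert u\Vert_{L^2(\Omega_t)}$ and the div--curl estimate for vector fields, which on $\Sigma_t\cong\mathbb{R}^3$ has no boundary term. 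For $\Theta^I$: a trace--Poincar\'e inequality applied to $\Vert\partial\partial_t^k\Theta\Vert_{L^2(\Omega_t)}$ together with the boundary trace $\Vert\partial_t^k\Theta\Vert_{L^2(\partial\Omega_t)}$ recovers the pure-$\partial_t$ norms; then, writing $\Box\partial_t^k\Theta^I=H_k$ (Lemma \ref{V higher order lemma}) as $g^{ij}\partial_i\partial_j(\partial_t^k\Theta^I)=-g^{00}\partial_t^{k+2}\Theta^I-2g^{0i}\partial_i\partial_t^{k+1}\Theta^I+(\text{lower})+H_k$ with the boundary relation $(\partial_t^2+\gamma D_n)\partial_t^k\Theta^I=F_k$ (Lemma \ref{V boundary higher order lemma}) --- an oblique/Neumann condition for $\partial_t^k\Theta^I$ since $\partial_t$ is tangent to $\partial\Omega$ --- I apply the elliptic estimate and induct on the number $p$ of spatial derivatives, checking from the structure of $H_k,F_k$ that each stage calls only on $\Theta$-terms already controlled at an earlier stage plus $\Vert V\Vert^2$-, $R$-, $\Gamma$-, $e$-terms handled below and through the transport equations. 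For $\Vert V\Vert^2$ the same scheme applies with \emph{homogeneous Dirichlet} data; the term $\tfrac{2G'\Vert V\Vert^2}{G}\partial_t^{k+3}\Vert V\Vert^2$ in Lemma \ref{V^2 higher order lemma} is absorbed into $\overline\Box$, whose leading time coefficient $g^{00}-\tfrac{2G'\Vert V\Vert^2}{G}\le g^{00}<0$ keeps the spatial part $g^{ij}\partial_i\partial_j$ elliptic, and the dangerous vorticity contribution in $H_k$ is tamed by identity $(\ref{main skill for higher order DV2})$, which trades $\partial_t^k(\tfrac{2\epsilon_K}{\Vert V\Vert}\Theta^J\nabla_V\nabla_K\omega_{KJ})$ for $\partial_t^k(\Theta^J\Box\Theta^L\omega_{LJ})+\partial_t^{k-1}M$ with $M$ of strictly lower order.

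For the curvature, working on $\Omega_t$ and $\Omega_t^c$ separately and on $\Sigma_t\cong\mathbb{R}^3$ globally, the curl--divergence system $(\ref{Maxwell equ curl})$--$(\ref{Maxwell equ div})$ expresses $\mathrm{curl}(\partial_t^k W)$ and $\mathrm{div}(\partial_t^k W)$ through $D_0\partial_t^k W$ --- essentially $\partial_t^{k+1}R$ --- and the sources $\mathcal{I},\mathcal{J}$ of $(\ref{I component1})$--$(\ref{J component})$, which are of strictly lower differential order in $R$; for $k\le\ell-1$ the term $\partial_t^{k+1}R$ lies in $\mathcal{E}_\ell(t)$, so the div--curl estimate gives the second claimed bound. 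Crucially the tangential components assembled in $W$ are continuous across $\partial\Omega_t$, so $\partial_t^k W\in H^1(\Omega_t)\cap H^1(\Omega_t^c)$ with matching traces gives $\partial_t^k W\in H^1(\Sigma_t)$ and the interface contributes nothing; the remaining curvature components are recovered algebraically from $W$, from $(\ref{Ricci equ})$ inside $\Omega$, and from $R_{IJ}=0$ outside. Iterating with Lemma \ref{W higher order lemma} and the transport equations $(\ref{transport e})$--$(\ref{transport Ch})$ for $e,\Gamma$, each extra spatial derivative costs one time derivative, yielding the last estimate $\sum_{2p+k\le\ell}(\cdots)\lesssim\mathcal{E}_\ell(t)$.

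The main obstacle is the coupled bookkeeping rather than any single inequality: $H_k,F_k,\mathcal{I},\mathcal{J}$ mix fluid and geometric quantities, so one must verify at every substitution that only objects already bounded at the right order (inside the $2p+k$ budget) reappear --- in particular that the vorticity never forces an out-of-range derivative, and that the curvature jump at $\partial\Omega_t$ never obstructs the div--curl estimate, both handled as above --- while also maintaining the $L^\infty$ control of the lowest-order coefficients that underlies the uniform ellipticity of Step 1. This forces the whole proposition to be proved as a single simultaneous induction.
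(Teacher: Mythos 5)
Your proposal follows essentially the same route as the paper's proof: a simultaneous induction on the number of spatial derivatives, using the interior elliptic estimate with Dirichlet data for $\Vert V\Vert^{2}$, the Neumann-type estimate for $\Theta$ with $D_{n}\partial_{t}^{k}\Theta$ recovered from the boundary equation of Lemma~\ref{V boundary higher order lemma}, the identity $(\ref{main skill for higher order DV2})$ to defuse the vorticity term, and the div--curl system on $\Omega_{t}$, $\Omega_{t}^{c}$ and $\Sigma_{t}$ for the curvature. The only cosmetic deviation is that you absorb $\tfrac{2G'\Vert V\Vert^{2}}{G}\partial_{t}^{k+3}\Vert V\Vert^{2}$ into a modified operator $\overline{\Box}$, whereas the paper simply bounds $\partial^{p-1}\partial_{t}^{k+3}\Vert V\Vert^{2}$ directly by the induction hypothesis since $2(p-1)+(k+2)\le \ell$; both are valid.
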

   
   We first provide a crucial estimate for $\partial^{p}\partial_{t}^{k+1}\Vert V\Vert^{2}$ and $\partial^{p}\partial_{t}^{k}\Vert V\Vert^{2}\Theta$.
   \begin{lemma}\label{M+1 control}
   	Under the assumotion of Proposition $\ref{main prop}$, with the implicit constant is independent of $C_{1}$, 
   	\begin{equation*}
   	\Vert\partial^{p}\partial_{t}^{k}\Theta\Vert_{L^{2}\left(\Omega_{t}\right)}+ \Vert\partial^{p}\partial_{t}^{k+1}\Vert V\Vert^{2}\Vert_{L^{2}\left(\Omega_{t}\right)}\lesssim 1, \quad \forall 2p+k\le M+1, \quad t\in\left[0,T\right],
   	\end{equation*} 
   	provided $T>0$ is sufficently small.
   \end{lemma}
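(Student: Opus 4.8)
The plan is to exploit that in Lagrangian coordinates the spatial slice is frozen, $\Omega_t\equiv\Omega_0$, so the fundamental theorem of calculus in time produces no boundary flux: for every $(p,k)$,
\[
\partial^p\partial_t^k\Theta(t,\cdot)=\partial^p\partial_t^k\Theta(0,\cdot)+\int_0^t\partial^p\partial_\tau^{k+1}\Theta(\tau,\cdot)\,d\tau,
\]
and similarly $\partial^p\partial_t^{k+1}\Vert V\Vert^2(t,\cdot)=\partial^p\partial_t^{k+1}\Vert V\Vert^2(0,\cdot)+\int_0^t\partial^p\partial_\tau^{k+2}\Vert V\Vert^2(\tau,\cdot)\,d\tau$. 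Taking $L^2(\Omega_0)$ norms and using Minkowski's integral inequality, the data terms are finite and fixed by the regularity assumptions \eqref{regularity conditions}, so it remains to bound, for $2p+k\le M+1$, the integrals $\int_0^t\Vert\partial^p\partial_\tau^{k+1}\Theta\Vert_{L^2(\Omega_\tau)}\,d\tau$ and $\int_0^t\Vert\partial^p\partial_\tau^{k+2}\Vert V\Vert^2\Vert_{L^2(\Omega_\tau)}\,d\tau$ by a quantity of the form $T\,\mathcal{P}(C_1)$. Once this is done, choosing $T$ small enough (depending on $C_1$) that $T\,\mathcal{P}(C_1)\le 1$ yields the stated bound, the implicit constant then depending only on $\mathscr{E}_\ell(0),\ell,c_0$; the $C_1$-dependence has been hidden entirely in the smallness of $T$.

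I would run this as an induction on the weight $m:=2p+k$, from $m=0$ up to $m=M+1$, carrying along the analogous $C_1$-independent short-time bounds for $e_I,\Gamma,\omega$ and hence for $g,g^{-1},\gamma$; these solve the transport equations \eqref{transport e}, \eqref{transport Ch}, \eqref{transport w} driven by $\Theta$ and $R$, so the same fundamental-theorem-of-calculus step applies to them. At weight $m$ the integrands above have weight $m+1\le M+2$. Pieces of the form $\partial\partial_t^{k'}\Theta$ or $\partial\partial_t^{k'+1}\Vert V\Vert^2$ with $k'\le\ell$ are bounded in $L^2(\Omega_\tau)$ directly by $\mathcal{E}_\ell(\tau)^{1/2}\le C_1^{1/2}$; for the remaining pieces (two or more, or no, spatial derivatives) I would use elliptic regularity on the higher-order wave equations \lem{V higher order lemma} and \lem{V^2 higher order lemma}, solving for the top spatial derivatives in terms of $\partial_t^2$ and lower order, with the commutator identities \eqref{commute D}--\eqref{commute n} to move between $\partial_t$ and $D_I$. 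For $\Vert V\Vert^2$ the relevant operator is the modified wave operator $\overline{\Box}=\Box-\tfrac{2G'\Vert V\Vert^2}{G}\partial_t^2$, uniformly hyperbolic since $G'\ge0$, together with the homogeneous Dirichlet conditions $\Vert V\Vert^2\equiv1$, $\partial_t^j\Vert V\Vert^2\equiv0$ ($j\ge1$) on $\partial\Omega$; for $\Theta$ one also invokes the boundary equation of \lem{V boundary higher order lemma} to express $D_n\partial_t^{k'}\Theta|_{\partial\Omega}$ through $\partial_t^{k'+2}\Theta$ and tangential traces. Iterating the reduction writes each piece as a polynomial in pure time derivatives $\partial_t^{\le M+2}$ of $(\Theta,\Vert V\Vert^2,g,g^{-1},\Gamma,e_I,R)$, which — provided $M$ is chosen so that $M+2$ does not exceed the orders supplied by $\mathcal{E}_\ell$ (using $\Vert\partial_t^j\Theta\Vert_{L^2(\Omega_\tau)}\lesssim\Vert\partial\partial_t^j\Theta\Vert_{L^2(\Omega_\tau)}+\Vert\partial_t^j\Theta\Vert_{L^2(\partial\Omega_\tau)}$ and Poincar\'e for $\Vert V\Vert^2$) — are each $L^2(\Omega_\tau)$-bounded by $\mathcal{P}(C_1)$, with all coefficients controlled in $L^\infty$ by Sobolev embedding from the induction hypothesis.

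The step I expect to be the main obstacle is precisely the bound on the weight-$(m+1)$ integrands: the pieces with at least two spatial derivatives sit one level above the direct reach of the energy $\mathcal{E}_\ell$, so one is forced into the elliptic-regularity argument and must keep the coefficients $g,g^{-1},\Gamma,e_I,\gamma$ uniformly under control while doing it — this is why the induction has to be run simultaneously for the geometric quantities and why \lem{V boundary higher order lemma} and the boundary conditions are needed to close the elliptic estimates on the boundary. A secondary, purely bookkeeping nuisance is verifying that $M$ is small enough relative to $\ell$ (and to the regularity index $K$ in \eqref{regularity conditions}) that none of the time-differentiated quantities produced along the way exceeds the available number of derivatives.
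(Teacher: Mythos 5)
Your proposal is correct and follows essentially the same route as the paper: the paper integrates $\frac{d}{dt}\Vert u\Vert_{L^2(\Omega_t)}^2$ along the Lagrangian flow and closes with Gronwall plus smallness of $T$, which is the same mechanism as your fundamental-theorem-of-calculus-in-time step (legitimate here since $\Omega_t\equiv\Omega_0$ in these coordinates). The only real difference is that the bound on the weight-$(m+1)$ integrand, which you propose to re-derive by hand via elliptic regularity on the higher-order wave equations, is exactly the content of Proposition~\ref{elliptic prop}, which the paper simply invokes (the two results being proved by a simultaneous induction, as you anticipate).
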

   \begin{proof}
   	Let $\xi$ be the Lagrangian parameterization, that is $\partial_{\tau}\xi(\tau,y)=\left(\frac{V}{V^{0}}\right)\left(\xi(\tau,y)\right).$ For any function $\Theta$,
   	\begin{equation*}
   	   \frac{\mathrm{d}\Theta(\xi(\tau,y))}{\mathrm{d}t}=\frac{D_{V}\Theta}{V^{0}}(\xi).
   	\end{equation*}
   	Multiplying both sides of the equation by $\Theta$ and applying both Cauchy-Schwarz and Gronwall's inequality, we obtain 
   	\begin{equation*}
   	   \Vert \Theta\Vert_{L^{2}(\Omega_{t})}\lesssim \Vert \Theta\Vert_{L^{2}(\Omega_{0})}\mathrm{exp}\int_{0}^{t}\Vert D_{V}\Theta\Vert_{L^{2}(\Omega_{\tau})}\mathrm{d}\tau.
   	\end{equation*}
   	We apply to this estimate to $\Theta=\partial^{p}\partial_{t}^{k}\Theta$ and $\Theta=\partial^{p}\partial_{t}^{k+1}\Vert V\Vert^{2}$. Then, according to Proposition \ref{elliptic prop}, as long as $2p+k\le M+1$, $\Vert D_{V}\Theta\Vert_{L^{2}(\Omega_{\tau})}\lesssim_{C} 1$. Therefore, we can obtain the desired result.
   \end{proof}

   We continue with the proof of elliptic estimates for $\partial\partial_{t}^{k}R$. Using the equation $(\ref{contract B equ})$ and $\ref{B equ}$, we can express the Maxwell system in the form of 
   \begin{equation*}
         \nabla^{\mu}F_{\mu\nu}=J_{\nu}, \quad \nabla_{[\mu}F_{\nu\lambda]}=I_{\mu\nu\lambda}.
   \end{equation*}
   In the coordinates $(\partial_{t}, \partial_{i})$, $i= 1, 2, 3$, we denote $\barE^{AB}\equiv \barE$ and $\barH^{AB}\equiv \barH$, where
   \begin{equation*}
       \barE_{i}:=F_{it}, \quad \barH^{k}:=-\frac{1}{2}\epsilon^{ijk}F_{ij}.
   \end{equation*}
   We transition from the spacetime connection $\nabla$ to the induced connection $\overline{\nabla}$ on the hypersurface $\Sigma_{t}$. Note that the divergence-curl systems for $\barE$ and $\barH$ exhibit the following structure. On the one hand, the divergence equation for $\barE$ and $\barH$ is 
   \begin{equation}\label{div E}
       g^{ij}\overline{\nabla}_j\barE_{i}=\rho_{\barE}
   \end{equation}
   and 
   \begin{equation}\label{div H}
      \overline{\nabla}_{i}\barH^{i}=\rho_{\barH},
   \end{equation}
   respectively. Here $\rho_{\barE}$ depends on $e_{I}^{\mu}$, $De_{I}^{\mu}$, $\Gamma_{IJ}^{K}$, $\hat{\Theta}$, $D\hat{\Theta}$, $F_{IJ}$, $\partial_{t}F_{IJ}$ and $J_{t}$ and $\rho_{\barH}$ depends on $e_{I}^{\mu}$, $De_{I}^{\mu}$, $\Gamma_{IJ}^{K}$, $\hat{\Theta}$, $D\hat{\Theta}$, $F_{IJ}$ and $\partial_{t}F_{IJ}$. On the other hand, the curl equations for $\barE$ and $\barH$, respectively, can be expressed as 
   \begin{equation}\label{curl E}
      g^{jk}\epsilon_{ji\ell}\overline{\nabla}_{k}\barH^{\ell}+g^{tk}\overline{\nabla}_{k}\barE_{i}=\sigma_{i}^{\barH}
   \end{equation}
   and 
   \begin{equation}\label{curl H}
       \overline{\nabla}_{i}\barE_{j}-\overline{\nabla}_{j}\barE_{i}=\sigma_{ij}^{\barE}.
   \end{equation}
   Here $\sigma_{i}^{\barH}$ and $\sigma_{ij}^{\barE}$ both depend on $e_{I}^{\mu}$, $De_{I}^{\mu}$, $\Gamma_{IJ}^{K}$, $\hat{\Theta}$, $D\hat{\Theta}$, $F_{IJ}$ and $\partial_{t}F_{IJ}$. We check that $g^{ij}$ is positively definite. Using equations $(\ref{div E})$, $(\ref{div H})$, $\ref{curl E}$ and $\ref{curl H}$, the elliptic estimates for the curvature are recorded in the following lemmas. 
   \begin{lemma}\label{elliptic estimate for R1}
   	   Let $\Theta$ be a $1$-from satisfying
   	   \begin{equation*}
   	       \overline{\nabla}_{i}\Theta_{j}-\overline{\nabla}_{j}\Theta_{i}=\sigma_{ij},\quad g^{ij}\overline{\nabla}_{i}\Theta_{j}=\rho \quad \text{on}\ \Sigma_{t}.
   	   \end{equation*}
   	   Then
   	   \begin{equation*}
   	       \Vert \overline{\nabla}\Theta\Vert_{L^{2}(\Sigma_{t})}\lesssim \Vert \Theta\Vert_{L^{2}(\Sigma_{t})}+\Vert \rho\Vert_{L^{2}(\Sigma_{t})}+\Vert \sigma\Vert_{L^{2}(\Sigma_{t})},
   	   \end{equation*}
   	   where the $L^{2}$-norm is with respect to the tensor $g^{ij}$ and the implicit constant depends on the maximum norm of $e_{I}^{\mu}$, $De_{I}^{\mu}$ and $R$.
   \end{lemma}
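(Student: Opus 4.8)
The plan is to prove this as the classical div--curl (Gaffney-type) estimate for a $1$-form, obtained by integrating a Bochner--Weitzenb\"ock identity on $\Sigma_t$; since $\Sigma_t$ is diffeomorphic to $\mathbb{R}^3$ there is no boundary, so no genuine boundary terms appear and the only care is needed at spatial infinity. Throughout, $\overline{\Ric}$ denotes the intrinsic Ricci curvature of $(\Sigma_t,g)$, and all norms and contractions are taken with respect to the positive-definite metric $g^{ij}$; the equations for $\Theta$ are read in the weak sense across $\partial\Omega_t$, so the estimate is a single global one on $\Sigma_t$.

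First I would integrate by parts once: using $\overline{\nabla}g=0$ and $\int_{\Sigma_t}\overline{\nabla}_kY^k\,\mathrm{d}V=0$ for a vector field $Y$ on $\Sigma_t$,
\begin{equation*}
   \int_{\Sigma_t}\abs{\overline{\nabla}\Theta}^2\,\mathrm{d}V = -\int_{\Sigma_t} g^{jl}\Theta_j\bigl(g^{ik}\overline{\nabla}_k\overline{\nabla}_i\Theta_l\bigr)\,\mathrm{d}V .
\end{equation*}
Next I would massage the second factor: swapping the two covariant derivatives produces a Riemann term that vanishes upon contraction against the symmetric $g^{ik}$; substituting $\overline{\nabla}_k\Theta_l=\overline{\nabla}_l\Theta_k+\sigma_{kl}$ from the curl hypothesis; swapping derivatives once more and contracting with $g^{ik}$ to produce $\overline{\nabla}_l\rho$ from the divergence hypothesis, one arrives at
\begin{equation*}
   g^{ik}\overline{\nabla}_k\overline{\nabla}_i\Theta_l = \overline{\nabla}_l\rho + \overline{\Ric}_l{}^m\Theta_m + g^{ik}\overline{\nabla}_i\sigma_{kl} .
\end{equation*}
Plugging this in and integrating by parts a second time to move the derivatives off $\rho$ and $\sigma$ back onto $\Theta$ gives
\begin{equation*}
   \int_{\Sigma_t}\abs{\overline{\nabla}\Theta}^2\,\mathrm{d}V = \int_{\Sigma_t}\rho^2\,\mathrm{d}V + \int_{\Sigma_t}g^{ik}g^{jl}(\overline{\nabla}_i\Theta_j)\sigma_{kl}\,\mathrm{d}V - \int_{\Sigma_t}\overline{\Ric}(\Theta,\Theta)\,\mathrm{d}V .
\end{equation*}
The middle integral is bounded by $\tfrac12\norm{\overline{\nabla}\Theta}_{L^2(\Sigma_t)}^2+C\norm{\sigma}_{L^2(\Sigma_t)}^2$ via Young's inequality, and the last by $\norm{\overline{\Ric}}_{L^\infty}\norm{\Theta}_{L^2(\Sigma_t)}^2$; absorbing the first of these into the left-hand side yields the asserted bound.

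For the dependence of the implicit constant: by $(\ref{connection of g})$ the components of $g$ on $\Sigma_t$ are algebraic in $e_I^\mu$ and the Christoffel symbols of $\overline{\nabla}$ are algebraic in $e_I^\mu,De_I^\mu$, while $\overline{\Ric}$ is controlled pointwise --- through the Gauss equation --- by the spacetime curvature $R$ together with the second fundamental form of $\Sigma_t$, which is in turn expressible via $De_I^\mu$; hence the constant depends only on the $L^\infty$-norms of $e_I^\mu$, $De_I^\mu$, $R$. The two points requiring genuine care, rather than routine computation, are: (i) the non-compactness of $\Sigma_t$ --- since a priori $\Theta$ (built from the curvature two-form) lies only in $L^2$, the two integrations by parts must be justified by a cutoff/density argument, using that outside a compact set the configuration is trivial ($e_I=\partial_I$, $R=0$) so that the contributions at spatial infinity vanish; and (ii) ensuring that after the second integration by parts the quantity $\overline{\nabla}\Theta$ reappears only linearly --- contracted against $\sigma$ --- so that it can honestly be absorbed. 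The first of these is the only step that is not purely mechanical; the rest of the argument is the standard elliptic div--curl estimate transcribed into the parallel-transported frame formalism of this paper.
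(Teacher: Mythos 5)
Your argument is essentially the paper's: the authors record the same Bochner/div--curl computation as a single pointwise identity (divergence terms, $\rho^{2}$, a Riemann term contracted with $\Theta\otimes\Theta$, and $\sigma$ contracted against $\overline{\nabla}\Theta$), then integrate over $\Sigma_{t}$ and apply Cauchy--Schwarz, exactly as you do via two integrations by parts. One small caveat: you invoke $\overline{\nabla}g=0$, but here $g^{ij}=m^{IJ}e_{I}^{i}e_{J}^{j}$ is the spatial block of the inverse spacetime metric, not the inverse induced metric, so it is not annihilated by the induced connection $\overline{\nabla}$; this is why the paper's identity carries the extra terms $\overline{\nabla}_{\nu}\left(g^{a\mu}g^{b\nu}\right)\Theta_{b}\overline{\nabla}_{a}\Theta_{\mu}$ and $\overline{\nabla}_{a}\left(g^{a\mu}g^{b\nu}\right)\Theta_{b}\overline{\nabla}_{\nu}\Theta_{\mu}$. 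These are harmless --- they are of the form $\Theta\cdot\overline{\nabla}\Theta\cdot De$ and are absorbed by Young's inequality with the constant depending on $\Vert De_{I}^{\mu}\Vert_{L^{\infty}}$ --- so your proof goes through once they are included.
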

   \begin{proof} Since
   	\begin{equation*}
   	   \begin{aligned}
   	       \vert\overline{\nabla}\Theta\vert^{2}&=\overline{\nabla}_{a}\left(g^{a\mu}g^{b\nu}\Theta_{b}\overline{\nabla}_{\nu}\Theta_{\mu}\right)-\overline{\nabla}_{\nu}\left(g^{a\mu}g^{b\nu}\Theta_{b}\overline{\nabla}_{a}\Theta_{\mu}\right)+\overline{\nabla}_{\nu}\left(g^{a\mu}g^{b\nu}\right)\Theta_{b}\overline{\nabla}_{a}\Theta_{\mu}\\
   	       &+\rho^{2}-g^{a\mu}g^{b\nu}\Theta_{b}\overline{R}_{a\nu\mu\ }^{\ \ \ \ c}\Theta_{c}-\overline{\nabla}_{a}\left(g^{a\mu}g^{b\nu}\right)\Theta_{b}\overline{\nabla}_{\nu}\Theta_{\mu}+g^{a\mu}g^{b\nu}\overline{\nabla}_{a}\Theta_{b}\sigma_{\mu\nu},
   	   \end{aligned}
   	\end{equation*}
   	we can obtain the desired estimate by integrating the above identity on $\Sigma_{t}$ and applying Cauchy-Schwarz inequality.
   \end{proof}

   Similarly, for a vectorfield $X$ on $\Sigma_{t}$, we can derive the estimates as follows.
   \begin{lemma}\label{elliptic estimate for R2}
   	  Let $X$ be vectorfield satisfying
   	  \begin{equation*}
   	  \overline{\nabla}_{i}X^{i}=\rho,\quad g^{jk}\epsilon_{ij\ell}\overline{\nabla}_{k}\Theta_{\ell}=\sigma_{i} \quad \text{on} \ \Sigma_{t}.
   	  \end{equation*}
   	  And suppose that the $L^{2}$-norm is also with respect to the tensor $g^{ij}$, that is,  
   	  \begin{equation*}
   	  \abs{\overline{\nabla}X}^{2}:=g^{ab}g^{ij}\overline{\nabla}_{a}X_{i}\overline{\nabla}_{b}X_{j}, \quad \abs{X}^{2}:=g^{ij}X_{i}X_{j}, \quad \abs{\sigma}^{2}:= g_{ab}g_{ij}\sigma^{ai}\sigma^{bj}.
   	  \end{equation*}
   	  Then
   	  \begin{equation*}
   	  \Vert \overline{\nabla}X\Vert_{L^{2}(\Sigma_{t})}\lesssim \Vert X\Vert_{L^{2}(\Sigma_{t})}+\Vert \rho\Vert_{L^{2}(\Sigma_{t})}+\Vert \sigma\Vert_{L^{2}(\Sigma_{t})},
   	  \end{equation*}
   	  where the implicit constant also depends on the maximum norm of $e_{I}^{\mu}$, $De_{I}^{\mu}$ and $R$. 
   \end{lemma}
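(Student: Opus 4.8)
The plan is to reduce the statement to Lemma \ref{elliptic estimate for R1} via Hodge duality on the three-dimensional slice $\Sigma_t$, whose induced metric $g^{ij}$ was just checked to be positive definite. Lowering an index, regard $X$ as the $1$-form $X_i$; its $g$-divergence $g^{ij}\overline{\nabla}_i X_j = \overline{\nabla}_i X^i$ is exactly $\rho$ by hypothesis. The $g$-metric curl $\sigma_i = g^{jk}\epsilon_{ij\ell}\overline{\nabla}_k X_\ell$ is, up to the factor $\sqrt{\det g}$ relating the Levi-Civita symbol $\epsilon_{ij\ell}$ to the metric volume form, the Hodge dual of the $2$-form $(dX)_{ij} := \overline{\nabla}_i X_j - \overline{\nabla}_j X_i$. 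Since the Hodge star of the positive-definite metric $g^{ij}$ is a pointwise isometry, one obtains the pointwise comparison $\abs{\sigma}^2 \approx \abs{dX}^2$ (all norms taken with respect to $g^{ij}$), with implicit constants depending only on upper and lower bounds for $\sqrt{\det g}$, which are in turn controlled by $\Vert e_I^\mu\Vert_{L^\infty}$ and $\Vert e_I^\mu - \partial_I\Vert_{L^\infty}$. Hence $\Vert\sigma\Vert_{L^2(\Sigma_t)} \approx \Vert\overline{\nabla}_{[i}X_{j]}\Vert_{L^2(\Sigma_t)}$.

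Applying Lemma \ref{elliptic estimate for R1} to the $1$-form $X_i$, with curl $2$-form $(dX)_{ij}$ and divergence $\rho$, then yields $\Vert\overline{\nabla} X\Vert_{L^2(\Sigma_t)} \lesssim \Vert X\Vert_{L^2(\Sigma_t)} + \Vert\rho\Vert_{L^2(\Sigma_t)} + \Vert dX\Vert_{L^2(\Sigma_t)} \lesssim \Vert X\Vert_{L^2(\Sigma_t)} + \Vert\rho\Vert_{L^2(\Sigma_t)} + \Vert\sigma\Vert_{L^2(\Sigma_t)}$, which is the assertion; note that the divergence appearing in Lemma \ref{elliptic estimate for R1} is $g^{ij}\overline{\nabla}_i X_j$, which coincides with $\overline{\nabla}_i X^i = \rho$. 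The implicit constant inherits its dependence on $\Vert e_I^\mu\Vert_{L^\infty}$, $\Vert De_I^\mu\Vert_{L^\infty}$ and $\Vert R\Vert_{L^\infty}$ from Lemma \ref{elliptic estimate for R1}, together with the $\sqrt{\det g}$ bounds above.

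Alternatively, and this is the route I would actually write out in detail, one repeats the Bochner-type integration-by-parts computation from the proof of Lemma \ref{elliptic estimate for R1} directly for $X$: expand $\abs{\overline{\nabla} X}^2$ as the $\overline{\nabla}$-divergence of a vector field, plus $(\overline{\nabla}_i X^i)^2$, plus a contraction of $X\otimes X$ against the intrinsic curvature $\overline{R}$ of $\Sigma_t$ arising from commuting two covariant derivatives, plus a term linear in $\overline{\nabla} X$ and in $\sigma$; then integrate over $\Sigma_t$, using that $\Sigma_t$ is diffeomorphic to $\mathbb R^3$ and all relevant fields lie in the $L^2$ classes of $(\ref{regularity conditions})$, so that the divergence term integrates to zero; bound the curvature term by $\Vert\overline{R}\Vert_{L^\infty}\Vert X\Vert_{L^2}^2$, with $\overline{R}$ controlled in $L^\infty$ by the spacetime curvature $R$ and the frame data $e_I, De_I$ through the Gauss equation; and absorb the linear term by the Cauchy-Schwarz inequality into $\tfrac12\Vert\overline{\nabla} X\Vert_{L^2}^2 + C\Vert\sigma\Vert_{L^2}^2$. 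The only genuine obstacle is bookkeeping rather than conceptual: keeping straight the two conventions for the curl (the $2$-form $dX$ versus its Hodge-dual vector, with the attendant $\sqrt{\det g}$ and Levi-Civita normalizations), and confirming that the curvature contribution costs only an $L^\infty$ norm of $R$ together with lower-order frame data, so that no derivative of $X$ is lost. No new analytic ingredient is needed beyond what already appears in the proof of Lemma \ref{elliptic estimate for R1}.
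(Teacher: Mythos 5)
Your second route is exactly the paper's proof: the authors expand $\vert\overline{\nabla}X\vert^{2}$ via the same Bochner-type identity (two divergence terms, $\rho^{2}$, a contraction of $X\otimes X$ against $\overline{R}$ from commuting covariant derivatives, and a term pairing $\overline{\nabla}X$ with the antisymmetric part $\sigma'^{ab}$ of $\overline{\nabla}X$), then integrate over $\Sigma_{t}$ and apply Cauchy--Schwarz, with the Levi-Civita/$\sqrt{\det g}$ bookkeeping relating $\sigma'$ to the given $\sigma_{i}$ being precisely the point you flag. Your first route (Hodge-dualizing $\sigma$ to reduce to Lemma \ref{elliptic estimate for R1}) is a valid shortcut but is not needed; the proposal is correct.
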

   \begin{proof}
   	  By direct calculation, we arrive at
   	  \begin{equation*}
   	     \begin{aligned}
   	        \vert\overline{\nabla}X\vert^{2}&=\nabla_{a}\left(g^{a\mu}g^{b\nu}X_{b}\overline{\nabla}_{\nu}X_{\mu}\right)-\overline{\nabla}_{\nu}\left(g^{a\mu}g^{b\nu}X_{b}\overline{\nabla}_{a}X_{\mu}\right)+\overline{\nabla}_{\nu}\left(g^{a\mu}g^{b\nu}\right)X_{b}\overline{\nabla}_{a}X_{\mu}\\
   	        &+\rho^{2}-g^{a\mu}g^{b\nu}X_{b}\overline{R}_{a\nu\mu\ }^{\ \ \ \ c}X_{c}-\overline{\nabla}_{a}\left(g^{a\mu}g^{b\nu}\right)X_{b}\overline{\nabla}_{\nu}X_{\mu}+\overline{\nabla}_{a}X_{b}\sigma^{'ab},
   	     \end{aligned}
   	  \end{equation*}
   	  where $\sigma^{'ab}=g^{ak}g^{bm}\overline{\nabla}_{k}X_{m}-g^{bk}g^{am}\overline{\nabla}_{k}X_{m}$. Then, similar to the proof of lemma \ref{elliptic estimate for R1}, the desired estimate is obtained. 
   \end{proof}

    Next, we turn to prove elliptic estimates for the fluid. To do this, one key is to decompose the scalar wave operator into an elliptic part and a part with $\partial_{t}$ derivatives. Recall that $g^{ij}$ is positive definite, which is written as 
    \begin{equation*}
          g^{ij}=m^{IJ}e_{I}^{i}e_{J}^{j}=\left(\delta^{\tilde{I}\tilde{J}}-\frac{\hat{\Theta}^{\tilde{I}}}{\hat{\Theta}^{0}}\frac{\hat{\Theta}^{\tilde{J}}}{\hat{\Theta}^{0}}\right)e_{\tilde{I}}^{i}e_{\tilde{J}}^{j}.
    \end{equation*}
   At this point we have the following results without proof.
   \begin{lemma}\label{decomposition u}
   	   Let $\barA:=g^{ij}\partial_{ij}^{2}$ be the second order elliptic operator. Then for any function $u$,
   	   \begin{equation*}
   	       \begin{aligned}
   	           \barA u&=\Box u+\left(\left(\frac{1-\hat{\Theta}^{\tilde{I}}e_{\tilde{I}}^{0}}{\hat{\Theta}^{0}}\right)^{2}-\sum_{\tilde{I}}\left(e_{\tilde{I}}^{0}\right)^{2}\right)\partial_{t}^{2}u-2\left(e_{\tilde{I}}^{i}e_{\tilde{I}}^{0}+\frac{\left(1-\hat{\Theta}^{\tilde{I}}e_{\tilde{I}}^{0}\right)\hat{\Theta}^{\tilde{J}}e_{\tilde{J}}^{i}}{\left(\hat{\Theta}^{0}\right)^{2}}\right)\partial_{i}\partial_{t}u\\
   	           &-\epsilon_{I}\left(D_{I}e_{I}^{\mu}\right)\partial_{\mu}u+\epsilon_{I}\Gamma_{II}^{K}e_{K}^{\mu}\partial_{\mu}u.
   	       \end{aligned}   	       
   	   \end{equation*}
   \end{lemma}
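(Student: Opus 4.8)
The plan is to expand the flat operator $\barA = g^{ij}\partial^2_{ij}$ by first rewriting the full d'Alembertian $\Box u = g^{\mu\nu}\nabla_{\mu}\nabla_{\nu}u$ in the orthonormal frame $\{e_{I}\}$ and then separating off the purely spatial second derivatives. First I would use that $m^{IJ}$ is diagonal to write
\[
\Box u = \sum_{I}\epsilon_{I}\,\nabla_{e_{I}}\nabla_{e_{I}}u = \sum_{I}\epsilon_{I}\big(D_{I}D_{I}u - \Gamma_{II}^{K}D_{K}u\big),
\]
directly from the definition $\nabla_{e_{I}}e_{I} = \Gamma_{II}^{K}e_{K}$. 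Expanding $D_{I}D_{I}u = e_{I}^{\mu}\partial_{\mu}(e_{I}^{\nu}\partial_{\nu}u) = e_{I}^{\mu}e_{I}^{\nu}\partial^{2}_{\mu\nu}u + (D_{I}e_{I}^{\nu})\partial_{\nu}u$ and invoking $\sum_{I}\epsilon_{I}e_{I}^{\mu}e_{I}^{\nu} = m^{IJ}e_{I}^{\mu}e_{J}^{\nu} = g^{\mu\nu}$ from $(\ref{connection of g})$, together with $D_{K}u = e_{K}^{\mu}\partial_{\mu}u$, this gives the coordinate identity
\[
g^{\mu\nu}\partial^{2}_{\mu\nu}u = \Box u - \epsilon_{I}(D_{I}e_{I}^{\mu})\partial_{\mu}u + \epsilon_{I}\Gamma_{II}^{K}e_{K}^{\mu}\partial_{\mu}u,
\]
which already produces the last two first-order terms in the statement.

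Next I would split the left-hand side along the $t$ versus spatial directions, $g^{\mu\nu}\partial^{2}_{\mu\nu}u = g^{00}\partial_{t}^{2}u + 2g^{0i}\partial_{i}\partial_{t}u + g^{ij}\partial^{2}_{ij}u$ (the factor $2$ coming from the two symmetric cross terms), so that $\barA u = g^{\mu\nu}\partial^{2}_{\mu\nu}u - g^{00}\partial_{t}^{2}u - 2g^{0i}\partial_{i}\partial_{t}u$. Substituting the previous display, the claim reduces to the two algebraic identities
\[
-g^{00} = \Big(\tfrac{1-\hat{\Theta}^{\tilde{I}}e_{\tilde{I}}^{0}}{\hat{\Theta}^{0}}\Big)^{2} - \sum_{\tilde{I}}(e_{\tilde{I}}^{0})^{2}, \qquad -g^{0i} = e_{\tilde{I}}^{i}e_{\tilde{I}}^{0} + \tfrac{(1-\hat{\Theta}^{\tilde{I}}e_{\tilde{I}}^{0})\hat{\Theta}^{\tilde{J}}e_{\tilde{J}}^{i}}{(\hat{\Theta}^{0})^{2}}.
\]
Both follow by writing $g^{\mu\nu} = -e_{0}^{\mu}e_{0}^{\nu} + \sum_{\tilde{I}}e_{\tilde{I}}^{\mu}e_{\tilde{I}}^{\nu}$ and substituting the components of $e_{0}$ read off from $(\ref{transport e})$, namely $e_{0}^{0} = \tfrac{1}{\hat{\Theta}^{0}}(1-\hat{\Theta}^{\tilde{I}}e_{\tilde{I}}^{0})$ and $e_{0}^{i} = -\tfrac{\hat{\Theta}^{\tilde{J}}e_{\tilde{J}}^{i}}{\hat{\Theta}^{0}}$ (using $\hat{V} = \partial_{t}$). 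Plugging these in and collecting terms yields exactly the two displayed identities, and hence the lemma.

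There is no genuine obstacle here — the statement is essentially a bookkeeping rearrangement of the wave operator. The only points requiring care are: keeping the full frame index $I$ running over $0,1,2,3$ distinct from the spatial frame index $\tilde{I}$ over $1,2,3$; tracking the sign conventions so that the terms $-\epsilon_{I}(D_{I}e_{I}^{\mu})\partial_{\mu}u$ and $+\epsilon_{I}\Gamma_{II}^{K}e_{K}^{\mu}\partial_{\mu}u$ appear with the signs written in the statement; and remembering the factor $2$ from the two cross terms $g^{0i}\partial^{2}_{0i}u$ and $g^{i0}\partial^{2}_{i0}u$ when isolating $\partial_{i}\partial_{t}u$.
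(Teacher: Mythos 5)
Your argument is correct in substance and is the natural one; the paper in fact states this lemma ``without proof,'' so there is no alternative route to compare against --- the computation you give (expanding $\Box u=\sum_I\epsilon_I(D_ID_Iu-\Gamma_{II}^KD_Ku)$, using $\sum_I\epsilon_Ie_I^\mu e_I^\nu=g^{\mu\nu}$, and then reading off $-g^{00}$ and $-2g^{0i}$ from the frame components of $e_0$) is exactly what the statement requires.

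One sign slip to fix in the reduction step: since the derivation produces the cross term $-2g^{0i}\partial_i\partial_tu$ and the lemma writes it as $-2\bigl(e_{\tilde I}^ie_{\tilde I}^0+\tfrac{(1-\hat{\Theta}^{\tilde I}e_{\tilde I}^0)\hat{\Theta}^{\tilde J}e_{\tilde J}^i}{(\hat{\Theta}^0)^2}\bigr)\partial_i\partial_tu$, the identity you need is
\[
+\,g^{0i}=e_{\tilde I}^ie_{\tilde I}^0+\frac{\bigl(1-\hat{\Theta}^{\tilde I}e_{\tilde I}^0\bigr)\hat{\Theta}^{\tilde J}e_{\tilde J}^i}{(\hat{\Theta}^0)^2},
\]
not $-g^{0i}=\cdots$ as you wrote. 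Indeed, substituting $e_0^0=\tfrac{1}{\hat{\Theta}^0}(1-\hat{\Theta}^{\tilde I}e_{\tilde I}^0)$ and $e_0^i=-\tfrac{\hat{\Theta}^{\tilde J}e_{\tilde J}^i}{\hat{\Theta}^0}$ into $g^{0i}=-e_0^0e_0^i+\sum_{\tilde I}e_{\tilde I}^0e_{\tilde I}^i$ gives the right-hand side with a plus sign, so your verification actually establishes the identity with the opposite sign to the one you displayed; fortunately that is the identity the lemma needs, so the conclusion stands. The $\partial_t^2u$ coefficient, $-g^{00}=(e_0^0)^2-\sum_{\tilde I}(e_{\tilde I}^0)^2$, is stated and checked correctly.
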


   Based on Lemma $\ref{decomposition u}$, we will use the following standard elliptic estimates in \cite{Taylor}. 
   \begin{lemma}\label{elliptic estimate}
   	   Let $N$ is a transversal vectorfield to $\partial\Omega_{t}\subset \Omega_{t}$. For any scalar function $u$, 
   	   \begin{equation*}
   	         \Vert \partial_{x}^{2}u\Vert_{L^{2}(\Omega_{t})}\lesssim\Vert \barA u\Vert_{L^{2}(\Omega_{t})}+\Vert u\Vert_{H^{\frac{3}{2}}(\partial\Omega_{t})},
   	   \end{equation*}
   	   and 
   	   \begin{equation*}
   	        \Vert \partial_{x}^{2}u\Vert_{L^{2}(\Omega_{t})}\lesssim\Vert \barA u\Vert_{L^{2}(\Omega_{t})}+\Vert Nu\Vert_{H^{\frac{1}{2}}(\partial\Omega_{t})},
   	   \end{equation*}
   	   where the implicit constants depend on $\Omega_{t}$.
   \end{lemma}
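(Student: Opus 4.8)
The plan is to recognize Lemma~\ref{elliptic estimate} as the classical $L^{2}$ (i.e.\ $H^{2}$) regularity estimate for the uniformly elliptic non-divergence operator $\barA=g^{ij}\partial^{2}_{ij}$, whose principal part is positive definite as recorded just above, with Dirichlet data $u|_{\partial\Omega_{t}}$ for the first inequality and oblique--derivative data $Nu|_{\partial\Omega_{t}}$, $N$ transversal to $\partial\Omega_{t}$, for the second. Both are instances of the Agmon--Douglis--Nirenberg theory collected in \cite{Taylor}; the argument I would give localizes by a finite partition of unity, treats interior and near-boundary pieces separately, uses tangential difference quotients near the boundary, recovers the single missing second derivative algebraically from the equation, and removes the lower-order remainder by a compactness argument. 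The only point to check in our setting is that $g^{ij}=m^{IJ}e_{I}^{i}e_{J}^{j}$ is sufficiently regular --- continuous, and $W^{1,\infty}$ if the difference-quotient proof is run literally --- which holds by the regularity conditions $(\ref{regularity conditions})$ once $T$ is small, since there $e_{I}\in H^{s}(\Omega_{t})$ with $s$ large.

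\emph{Interior estimate.} Fix a ball $B$ with $\overline{B}\subset\Omega_{t}$, centre $x_{0}$, and a cutoff $\chi$ supported in $B$. Freezing coefficients, $\barA=g^{ij}(x_{0})\partial^{2}_{ij}+\big(g^{ij}-g^{ij}(x_{0})\big)\partial^{2}_{ij}$; the constant-coefficient operator $g^{ij}(x_{0})\partial^{2}_{ij}$ is inverted by a Calder\'on--Zygmund multiplier with elliptic symbol, giving $\|\partial^{2}(\chi u)\|_{L^{2}(\mathbb{R}^{3})}\lesssim\|g^{ij}(x_{0})\partial^{2}_{ij}(\chi u)\|_{L^{2}}$, while the variable part is absorbed for $B$ small by continuity of $g^{ij}$. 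Since $[\,\chi,\barA\,]$ is first order, one gets $\|\partial^{2}u\|_{L^{2}(\tfrac12 B)}\lesssim\|\barA u\|_{L^{2}(B)}+\|u\|_{H^{1}(B)}$.

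\emph{Boundary estimate.} Near a boundary point, a diffeomorphism $\Phi$ flattening $\partial\Omega_{t}$ to $\{y_{3}=0\}$ turns $\barA$ into another non-divergence operator $\widetilde{\barA}$ with comparable ellipticity. For the Dirichlet problem, subtract an $H^{2}$ extension of the boundary datum furnished by the trace theorem (cost $\|u\|_{H^{3/2}(\partial\Omega_{t})}$) to reduce to homogeneous data on $\{y_{3}=0\}$, then apply tangential difference quotients $\Delta_{h}^{a}$, $a=1,2$, which respect the Dirichlet condition; this controls every second derivative $\partial_{a}\partial_{b}u$ with $a$ or $b$ tangential, and the equation $\widetilde g^{33}\partial_{3}^{2}u=\widetilde{\barA}u-(\text{remaining second-order terms})$, with $\widetilde g^{33}$ bounded below by ellipticity, recovers $\partial_{3}^{2}u$. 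For the oblique problem $Nu=h$: transversality of $N$ is exactly the Lopatinski--Shapiro covering condition, so the same scheme applies, the boundary condition contributing $\|Nu\|_{H^{1/2}(\partial\Omega_{t})}$ through the trace of $\Delta_{h}^{a}(Nu)$, after which $\partial_{3}^{2}u$ is again read off the equation. Transforming back by $\Phi^{-1}$ gives, near the boundary, $\|\partial^{2}u\|_{L^{2}}\lesssim\|\barA u\|_{L^{2}}+\|u\|_{H^{3/2}(\partial\Omega_{t})}+\|u\|_{H^{1}}$ (resp.\ with $\|Nu\|_{H^{1/2}}$).

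\emph{Conclusion and main difficulty.} Summing the finitely many local estimates yields $\|\partial_{x}^{2}u\|_{L^{2}(\Omega_{t})}\lesssim\|\barA u\|_{L^{2}(\Omega_{t})}+\|u\|_{H^{3/2}(\partial\Omega_{t})}+\|u\|_{H^{1}(\Omega_{t})}$, and likewise with $\|Nu\|_{H^{1/2}(\partial\Omega_{t})}$. The remainder $\|u\|_{H^{1}(\Omega_{t})}$ is removed by the usual compactness argument: were it present, there would be $u_{n}$ with $\|\partial_{x}^{2}u_{n}\|_{L^{2}}=1$ and $\|\barA u_{n}\|_{L^{2}}+\|\text{boundary data of }u_{n}\|\to0$; such $u_{n}$ are bounded in $H^{2}(\Omega_{t})$, a subsequence converges weakly in $H^{2}$ and strongly in $H^{1}$ (Rellich), and the limit solves $\barA u=0$ with homogeneous boundary data, hence $u=0$ by the weak maximum principle in the Dirichlet case (and $u$ constant in the oblique case, where one argues modulo constants since $\|\partial_{x}^{2}u\|_{L^{2}}$ ignores them); the local estimates then force $\|\partial_{x}^{2}u_{n}\|_{L^{2}}\to0$, a contradiction. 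The main obstacle is the near-boundary analysis --- verifying the covering condition for the merely transversal field $N$ and controlling the constants introduced by the flattening map in terms of the regularity of $g^{ij}$ --- but this is precisely the content of the cited ADN theory, so in the paper it suffices to invoke \cite{Taylor} after noting that $(\ref{regularity conditions})$ supplies the needed smoothness of $g^{ij}$.
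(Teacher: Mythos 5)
Your proposal is correct and matches the paper's treatment: the paper gives no proof of this lemma but simply invokes the standard Agmon--Douglis--Nirenberg / $L^2$ elliptic regularity theory from \cite{Taylor}, which is exactly the argument (interior freezing of coefficients, boundary flattening with tangential difference quotients, recovery of the normal-normal derivative from the equation, and a compactness step to drop the lower-order remainder) that you reconstruct. The only point worth flagging is that in the oblique case the removal of $\Vert u\Vert_{H^{1}}$ from the right-hand side requires normalizing the minimizing sequence (e.g.\ subtracting means, or decomposing $u$ into a Dirichlet piece plus a piece with prescribed oblique data) since the seminorm $\Vert\partial_{x}^{2}u\Vert_{L^{2}}$ kills affine functions while the kernel of the boundary-value problem consists only of constants; your "modulo constants" remark is the right idea and the details are standard.
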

 
   \begin{proof}[Proof of Proposition $\ref{elliptic prop}$]
   		 We proceed by induction on the order $p$ of the spatial derivatives $\partial_{x}^{p}$ applied to $\Theta$ and $\Vert V \Vert^{2}$. For $p=1$,  the result follows directly from the energy definition. Now, suppose that for $1\le p\le \frac{M+2}{2}-1$, the following holds: 
   		\begin{equation}\label{assumption 1}
   		   \sum_{q\le p}\sum_{k+2q\le M+2}\Vert \partial^{q}\partial_{t}^{k}\Theta\Vert_{L^{2}\left(\Omega_{t}\right)}^{2}+\sum_{q\le p}\sum_{k+2q\le M+2}\Vert \partial^{q}\partial_{t}^{k+1}\Vert V\Vert^{2}\Vert_{L^{2}\left(\Omega_{t}\right)}^{2}\lesssim\mathcal{E}_{\ell}\left(t\right).
   		\end{equation}
        We will now prove the estimates for $p+1$, specifically,
        \begin{equation}\label{p+1 equ}
            \sum_{k\le M-2p}\Vert \partial^{p+1}\partial_{t}^{k}\Theta\Vert_{L^{2}\left(\Omega_{t}\right)}^{2}+\sum_{k\le M-2p}\Vert \partial^{p+1}\partial_{t}^{k+1}\Vert V\Vert^{2}\Vert_{L^{2}\left(\Omega_{t}\right)}^{2}\lesssim\mathcal{E}_{\ell}\left(t\right).
        \end{equation}
       
       \textbf{Step 1: } We start with the estimate for $\Vert \partial^{p+1}\partial_{t}^{k+1}\Vert V\Vert^{2}\Vert_{L^{2}\left(\Omega_{t}\right)}$. To do this, we use the first estimate in Lemma $\ref{elliptic estimate}$ to estimate the equations for $ \partial^{p-1}\partial_{t}^{k+1}\Vert V\Vert^{2}$. Recall that $\partial_{t}^{k}\Vert V\Vert^{2}=0$ on $\partial\Omega$ for $k>0$. In fact, we need to commute tangential derivatives to reduce the boundary term. Among all the terms on the right-hand side of the equation in Lemma $\ref{decomposition u}$, our focus is on terms
       \begin{equation*}
         \partial^{p-1}\left(\frac{2\Vert V\Vert^{2}G^{'}}{G}\partial_{t}^{k+3}\Vert V\Vert^{2}\right)\ \text{and} \ \partial^{p-1}H_{k}.
       \end{equation*}
       And the remaining terms are bounded by the induction hypothesis $\left(\ref{assumption 1}\right)$. In particular, for $2p+k-1\le M-1\le \ell-1$ and $T$ is sufficiently small, the  highest curvature term $\partial^{p}\partial_{t}^{k-1}R$  can be bounded by $\mathcal{E}_{\ell}$, using fundamental theorem of calculus. In view of Lemma $\ref{V^2 higher order lemma}$, the top terms in $\partial^{p-1}H_{k}$ are 
       \begin{equation*}
             \partial^{p+1}\partial_{t}^{k}\Vert V\Vert^{2} \ \text{and} \ \partial^{p+1}\partial_{t}^{k-1}\Theta.
       \end{equation*} 
       Since $2\left(p+1\right)+k-1\le M+1$, we can use Lemma $\ref{M+1 control}$ to bound the $L^{2}\left(\Omega_{t}\right)$ norms of these terms by $\mathcal{E}_{\ell}$. All other terms appearing in $\partial^{p-1}H_{k}$ can be bounded by $\mathcal{E}_{\ell}$ using the induction hypothesis $\left(\ref{assumption 1}\right)$. Since $2\left(p-1\right)+k+2\le M\le \ell$, the term $\partial^{p-1}\left(\frac{2\Vert V\Vert^{2}G^{'}}{G}\partial_{t}^{k+3}\Vert V\Vert^{2}\right)$ can also be bounded by $\mathcal{E}_{\ell}$.
       
      \textbf{Step 2: }  Let us turn to the estimate for $\Vert\partial^{p+1}\partial_{t}^{k}\Theta\Vert_{L^{2}\left(\Omega_{t}\right)},\  k+2p+2\le M+2$. The second estimate in Lemma $\ref{elliptic estimate}$ gives
      \begin{equation*}
           \Vert\partial^{p+1}\partial_{t}^{k}\Theta\Vert_{L^{2}\left(\Omega_{t}\right)}\lesssim \Vert \barA \partial^{p-1}\partial_{t}^{k}\Theta\Vert_{L^{2}\left(\Omega_{t}\right)}+\Vert n^{\mu}\partial_{\mu}\left(\partial^{p-1}\partial_{t}^{k}\Theta\right)\Vert_{H^{\frac{1}{2}}\left(\partial\Omega_{t}\right)}.
      \end{equation*}
      The term $\barA \partial^{p-1}\partial_{t}^{k}\Theta$ has the similar structure to the corresponding term in Step 1, which can be bounded by $\mathcal{E}_{\ell}$ for $T$ sufficiently small. For the boundary contribution $\Vert n^{\mu}\partial_{\mu}\left(\partial^{p-1}\partial_{t}^{k}\Theta\right)\Vert_{H^{\frac{1}{2}}\left(\partial\Omega_{t}\right)}$, the trace theorem and Lemma $\ref{V boundary higher order lemma}$ gives  
      \begin{equation*}
         \begin{aligned}
           \Vert n^{\mu}\partial_{\mu}\left(\partial^{p-1}\partial_{t}^{k}\Theta\right)\Vert_{H^{\frac{1}{2}}\left(\partial\Omega_{t}\right)}&\lesssim \Vert n^{\mu}\partial_{\mu}\left(\partial^{p-1}\partial_{t}^{k}\Theta\right)\Vert_{H^{1}\left(\Omega_{t}\right)}\\
           &\lesssim \Vert \left[n^{\mu}\partial_{\mu},\partial^{p-1}\right]\partial_{t}^{k}\Theta\Vert_{H^{1}\left(\Omega_{t}\right)}+\Vert \partial^{p-1}\partial_{t}^{k+2}\Theta\Vert_{H^{1}\left(\Omega_{t}\right)}+\Vert\partial^{p-1}F_{k}\Vert_{H^{1}\left(\Omega_{t}\right)}.
         \end{aligned}
      \end{equation*}
      Using the induction hypothesis, the right terms above, except for the last term $\Vert\partial^{p-1}F_{k}\Vert_{H^{1}\left(\Omega_{t}\right)}$, are bounded by $\mathcal{E}_{\ell}$. Using Lemma $\ref{V boundary higher order lemma}$, the top order terms in $\partial^{p}F_{k}$ are
      \begin{equation*}
           \partial^{p+1}\partial_{t}^{k+1}\Vert V\Vert^{2}, \quad \partial^{p}\partial_{t}^{k+1}\Theta \ \text{and}\ \partial^{p+1}\partial_{t}^{k-1}\Theta.
      \end{equation*}
      Since $k+2\left(p+1\right)\le M+2$, the term $\partial^{p+1}\partial_{t}^{k+1}\Vert V\Vert^{2}$ can be handled as shown in Step 1. And the last two terms are bounded by Lemma $\ref{M+1 control}$ for $k+1+2p\le M+1$ and $k-1+2\left(p+1\right)\le M+1$.
      
      \textbf{Step 3: } We now arrive at the estimates for the curvature. By applying Lemmas \ref{V higher order lemma}, \ref{V boundary higher order lemma}, and \ref{V^2 higher order lemma}, along with the trace theorem, the highest-order curvature term requiring control is $\Vert \partial^{p}\partial_{t}^{k-1}R\Vert_{L^{2}(\Omega_{t})}$. This term is analogous to the highest-order curvature term treated in \cite{Miao1}. It is crucial to emphasize that all terms necessary for controlling the curvature are manageable. As a result, the proof follows a similar structure to Proposition 2.14 in \cite{Miao1}.   
   \end{proof}

\subsection{Proof of Proposition $\ref{main prop}$}\label{proof of Proposition 2.1}
 We will prove Proposition $\ref{main prop}$ in this section. To do this, we first introduce the following auxiliary lemmas. 
 
\begin{lemma}\label{2 V^2 estimate}
	Suppose the hypotheses of Proposition $\ref{main prop}$ hold. If $T>0$ is sufficiently small then for any $k\le \ell$ and $t\in[0,T]$
	\begin{equation*}
	        \Vert \partial^{2}\partial_{t}^{k}\Vert V\Vert^{2}\Vert_{L^{2}\left(\Omega_{t}\right)}^{2}\lesssim \mathcal{E}_{\ell}\left(t\right),
	\end{equation*}
	where the implicit constant depends polynomially on $\mathcal{E}_{k-1}\left(T\right)$ if $k$ is sufficiently large, and on $C_{1}$ for small $k$.
\end{lemma}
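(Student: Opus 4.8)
The plan is to argue by induction on $k$: at stage $k$ we recover $\partial^{2}\partial_{t}^{k}\Vert V\Vert^{2}$ from the Dirichlet elliptic estimate of Lemma~\ref{elliptic estimate} applied to $u=\partial_{t}^{k}\Vert V\Vert^{2}$, after converting the wave operator $\Box$ into the uniformly elliptic operator $\barA=g^{ij}\partial_{ij}^{2}$ via Lemma~\ref{decomposition u}. The first point to record is that, because $D_{V}\Vert V\Vert^{2}=0$ on $\partial\Omega$ and $\partial_{t}=V/\Vert V\Vert$ is tangent to $\partial\Omega_{0}^{T}$, a one-line induction gives $\partial_{t}^{k}\Vert V\Vert^{2}\equiv 0$ on $\partial\Omega_{0}^{T}$ for all $k\geq 1$ (and $\Vert V\Vert^{2}\equiv 1$ there). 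Hence the boundary term $\Vert\partial_{t}^{k}\Vert V\Vert^{2}\Vert_{H^{3/2}(\partial\Omega_{t})}$ in Lemma~\ref{elliptic estimate} vanishes for $k\geq1$ and is $\lesssim 1$ for $k=0$, so the whole estimate reduces to bounding $\Vert\barA\,\partial_{t}^{k}\Vert V\Vert^{2}\Vert_{L^{2}(\Omega_{t})}$. For $k\leq\ell-1$ the statement is in fact already contained in Proposition~\ref{elliptic prop}, so the content of the lemma is the top level $k=\ell$.

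By Lemma~\ref{decomposition u}, $\barA\,\partial_{t}^{k}\Vert V\Vert^{2}$ equals $\Box\,\partial_{t}^{k}\Vert V\Vert^{2}$ plus terms of the form $c_{1}\,\partial_{t}^{k+2}\Vert V\Vert^{2}$, $c_{2}^{i}\,\partial_{i}\partial_{t}^{k+1}\Vert V\Vert^{2}$, $c_{3}^{\mu}\,\partial_{\mu}\partial_{t}^{k}\Vert V\Vert^{2}$, whose coefficients are built from $\hat{\Theta}$, $e_{I}^{\mu}$, $De_{I}^{\mu}$, $\Gamma$ and are bounded in $L^{\infty}(\Omega_{t})$ by a constant controlled, via Proposition~\ref{elliptic prop} and Sobolev embedding, by $C_{1}$ (here one also uses $(\hat{\Theta}^{0})^{2}\geq 1$, so division by $\hat{\Theta}^{0}$ is harmless). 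For the wave term I substitute $(\ref{2V^2 frame equ})$ or $(\ref{2DV^2 frame equ})$ when $k\in\{0,1\}$ and Lemma~\ref{V^2 higher order lemma} when $k\geq 2$, all of which give $\Box\,\partial_{t}^{k}\Vert V\Vert^{2}=\frac{2G'\Vert V\Vert^{2}}{G}\,\partial_{t}^{k+2}\Vert V\Vert^{2}+H_{k-1}$ with $H_{k-1}$ the explicit list of contractions recorded there.

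It then remains to bound the resulting source in $L^{2}(\Omega_{t})$ by $\mathcal{E}_{\ell}(t)^{1/2}$ times such constants. The two borderline terms $\partial_{t}^{k+2}\Vert V\Vert^{2}$ and $\partial_{i}\partial_{t}^{k+1}\Vert V\Vert^{2}$ are components of $\partial\,\partial_{t}^{k+1}\Vert V\Vert^{2}$, which belongs to $E_{k}$, so they are $\lesssim\mathcal{E}_{\ell}(t)^{1/2}$ precisely because $k\leq\ell$; likewise $\partial_{\mu}\partial_{t}^{k}\Vert V\Vert^{2}\lesssim E_{k-1}(t)^{1/2}$. Every term of $H_{k-1}$ (and of the $k\in\{0,1\}$ source) involving $\Theta$, $R$, $\Gamma$, $e_{I}^{\mu}$, or at most one spatial derivative of $\partial_{t}^{\leq k}\Vert V\Vert^{2}$, is controlled by the energy definition together with the elliptic bounds of Proposition~\ref{elliptic prop} — which supply $\Vert\partial^{2}\partial_{t}^{j}\Theta\Vert_{L^{2}(\Omega_{t})}$ for $j\leq\ell-2$, $\Vert\partial\partial_{t}^{j}R\Vert_{L^{2}(\Omega_{t})}$ for $j\leq\ell-1$, and $\Vert\partial^{2}\partial_{t}^{j}\Vert V\Vert^{2}\Vert_{L^{2}(\Omega_{t})}$ for $j\leq\ell-1$ — and by Lemma~\ref{M+1 control}, with products split by H\"older by putting the lower-order factor in $L^{\infty}$. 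The only genuinely recursive term is $D^{2}\partial_{t}^{j_{6}}\Vert V\Vert^{2}$ with $j_{6}\leq k-1$, which is furnished by the inductive hypothesis (or by Proposition~\ref{elliptic prop}) at level $j_{6}<k$; this is what produces the polynomial dependence on $\mathcal{E}_{k-1}(T)$ for large $k$, while for small $k$ all constants reduce to functions of $C_{1}$, $\mathscr{E}_{\ell}(0)$, $c_{0}$, $\ell$. Smallness of $T$ enters exactly as in Proposition~\ref{elliptic prop} and Lemma~\ref{M+1 control}, to absorb the terms estimated by the fundamental theorem of calculus in $t$. Collecting the bounds closes the induction.

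The main obstacle is the bookkeeping at the top of the hierarchy rather than any individual estimate. The decomposition of Lemma~\ref{decomposition u} unavoidably reintroduces $\partial_{t}^{k+2}\Vert V\Vert^{2}$ and $\partial_{i}\partial_{t}^{k+1}\Vert V\Vert^{2}$, which carry the same total order as the quantity being bounded and survive only because they literally sit inside $E_{k}$ — this is what pins the range to $k\leq\ell$. Similarly, $H_{k-1}$ feeds back $D^{2}\partial_{t}^{k-1}\Vert V\Vert^{2}$, forcing the argument to be organized recursively in $k$. The essential structural input, which one verifies from Lemma~\ref{V^2 higher order lemma}, is that no term of $H_{k-1}$ simultaneously carries two spatial and $k$ (or more) time derivatives of any of $\Vert V\Vert^{2}$, $\Theta$ or $R$; this is exactly what makes the closure possible.
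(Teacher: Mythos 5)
Your argument is correct and follows essentially the same route as the paper: induction on $k$, the Dirichlet elliptic estimate of Lemma~\ref{elliptic estimate} combined with the decomposition of Lemma~\ref{decomposition u}, substitution of the higher-order wave equation from Lemma~\ref{V^2 higher order lemma}, with the borderline terms $\partial_t^{k+2}\Vert V\Vert^2$ and $\partial\partial_t^{k+1}\Vert V\Vert^2$ absorbed by the energy definition, the recursive term $D^2\partial_t^{k-1}\Vert V\Vert^2$ handled by the induction hypothesis, and $D^2\partial_t^{k-2}\Theta$, $D\partial_t^{k-2}R$ handled by Proposition~\ref{elliptic prop}. The only cosmetic difference is the base case $k=0$, which the paper dispatches via $\nabla^{\mu}\Vert V\Vert^{2}=-2\nabla_{V}V^{\mu}$ rather than via the elliptic estimate with boundary datum $\Vert V\Vert^{2}=1$; both work.
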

\begin{proof}
	For $k=0$ this follows by writing 
	\begin{equation*}
	     \nabla^{\mu}\Vert V\Vert^{2}=-2\nabla_{V}V^{\mu}.
	\end{equation*}
	 Inductively, we assume that the lemma holds for $k\le j-1\le \ell-1$ and prove it for $k=j$. In view of Lemma $\ref{decomposition u}$ and $\ref{elliptic estimate}$, we get
	\begin{equation*}
	   \Vert\partial^{2}\partial_{t}^{j}\Vert V\Vert^{2}\Vert_{L^{2}\left(\Omega_{t}\right)}^{2}\lesssim \Vert\partial_{t}^{j+2}\Vert V\Vert^{2}\Vert_{L^{2}\left(\Omega_{t}\right)}^{2}+\Vert\partial\partial_{t}^{j+1}\Vert V\Vert^{2}\Vert_{L^{2}\left(\Omega_{t}\right)}^{2}+\Vert H_{j-1}\Vert_{L^{2}\left(\Omega_{t}\right)}^{2},
	\end{equation*}
	where $H_{j-1}$ is given in Lemma $\ref{V^2 higher order lemma}$. The estimates for the first two terms follow from the definition of the energies. In view of Lemma $\ref{V^2 higher order lemma}$, except for the highest order terms 
	\begin{equation*}
	     \partial^{2}\partial_{t}^{j-1}\Vert V\Vert^{2} \ \text{and} \ \partial^{2}\partial_{t}^{j-2}\Theta
	\end{equation*}
	in $H_{j-1}$, the other terms are directly bounded by $\mathcal{E}_{\ell-1}\le \mathcal{E}_{\ell}$. The first term above can be bounded using induction hypothesis. While the second term is bounded using Proposition $\ref{elliptic prop}$, because $j-2+2\cdot 2\le \ell +2$. So can the term $\partial\partial_{t}^{j-2}R,\ j-2\le \ell -1$.
\end{proof}	
	
	The next lemma will be used to estimate $\partial\partial_{t}^{k-1}\Theta$ on the boundary.
\begin{lemma}\label{V boundary estimate}
	Under the hypotheses of Proposition $\ref{main prop}$ and the condition $\delta>0$ (small), if $T>0$ is sufficiently small then for any $k\le \ell$ and $t\in t\in[0,T]$
	\begin{equation*}
	    \Vert \partial\partial_{t}^{j}\Theta\Vert_{L^{2}\left(\partial\Omega_{0}^{T}\right)}^{2}\lesssim \mathcal{E}_{\ell}\left(0\right)+R_{j}\left(\mathcal{E}_{k-1}\left(T\right)\right)+\delta\mathcal{E}_{k}\left(T\right).
	\end{equation*} 
	Here the implicit constant depends on $C_{1}$, and $R_{j}$ is some polynomial function for each $j\le k-1$.
\end{lemma}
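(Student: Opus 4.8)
The plan is to bound the boundary norm $\Vert \partial\partial_t^{j}\Theta\Vert_{L^2(\partial\Omega_0^T)}$ by trading it for an interior norm via a trace/divergence argument, then converting that interior norm into controlled energy quantities. Concretely, I would first use the interior wave equation $\Box\partial_t^{j}\Theta = H$ from Lemma \ref{Theta boundary control} together with the energy identity there, choosing the timelike multiplier $Q = \partial_t + \alpha n$ so that the boundary integrand $-\int_0^T\int_{\partial\Omega_t}(Q\partial_t^{j}\Theta\, D_n\partial_t^{j}\Theta - \tfrac{\alpha}{2} g^{\alpha\beta}\partial_\alpha\partial_t^{j}\Theta\,\partial_\beta\partial_t^{j}\Theta)\sqrt{|g|}$ has a definite sign. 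The key structural point is that on $\partial\Omega$ we have the boundary equation $(\partial_t^2 + \gamma D_n)\partial_t^{j}\Theta = F_{j}$ from Lemma \ref{V boundary higher order lemma}, which lets us express $D_n\partial_t^{j}\Theta$ in terms of $\partial_t^{j+2}\Theta$ and $F_{j}$; substituting this makes the boundary term in the energy identity coercive in $\gamma^{-1}|\partial_t^{j+1}\Theta|^2$ on $\partial\Omega$ (up to lower-order terms and the source $F_j$), after integrating by parts in $t$ on the boundary as in Lemma \ref{V energy lemma}.

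Second, I would control the tangential derivatives of $\partial_t^{j}\Theta$ on the boundary: the tangential components of $\partial\partial_t^{j}\Theta$ are recovered from $\partial_t^{j+1}\Theta$ and from tangential spatial derivatives of $\partial_t^{j}\Theta$ restricted to $\partial\Omega$, which by the trace theorem are bounded by $\Vert \partial_t^{j}\Theta\Vert_{H^{3/2}(\partial\Omega_t)} \lesssim \Vert \partial_t^{j}\Theta\Vert_{H^2(\Omega_t)}$; this last quantity is handled by the elliptic estimate of Proposition \ref{elliptic prop} (which gives $\sum_{2p+k\le\ell+2}\Vert\partial^p\partial_t^k\Theta\Vert_{L^2(\Omega_t)}^2\lesssim\mathcal{E}_\ell(t)$), valid since $j\le\ell$. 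The normal derivative $D_n\partial_t^{j}\Theta$ on the boundary is then exactly what the boundary equation $(\partial_t^2+\gamma D_n)\partial_t^{j}\Theta=F_j$ controls, and the right-hand side $F_j$ is, by Lemma \ref{V boundary higher order lemma}, a combination of $\Gamma$, $D\partial_t^{j_1+1}\Vert V\Vert^2$, $D\partial_t^{j_2}\Vert V\Vert^2$, $\partial_t^{j_3+1}\Theta$, $\partial_t^{j_4}\Theta$, $D\partial_t^{j_5}\Theta$ and $\partial_t^{j_6}R$ with indices summing to at most $j$; each of these is estimated either by $\mathcal{E}_\ell(0) + R_j(\mathcal{E}_{k-1}(T))$ (the lower-order terms, using the fundamental theorem of calculus in $t$ to absorb a factor of $T$) or, for the genuinely top-order pieces such as $D\partial_t^{j}\Vert V\Vert^2$ evaluated by the trace theorem, picks up a small factor $\delta$ (or $T^{1/2}$) so it can be absorbed as $\delta\mathcal{E}_k(T)$.

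Third, I would assemble these pieces into the claimed inequality. The initial-data term $\int_{\Omega_0}(\cdots)\sqrt{|g|}$ in the energy identity is bounded by $\mathcal{E}_\ell(0)$; the bulk error $-\int_0^T\int_{\Omega_t} H\partial_t^{j}\Theta\sqrt{|g|} + \int_0^T\int_{\Omega_t}(g^{\alpha\beta}(\partial_\alpha Q^\mu)\partial_\mu\partial_t^{j}\Theta\,\partial_\beta\partial_t^{j}\Theta - \tfrac12\partial_\mu(\sqrt{|g|}g^{\alpha\beta}Q^\mu)\partial_\alpha\partial_t^{j}\Theta\,\partial_\beta\partial_t^{j}\Theta)$ contributes a factor of $T$ times a polynomial in the energies (using $H$ from Lemma \ref{V higher order lemma} and the elliptic control of its top terms $D^2\partial_t^{j-1}\Theta$, $D\partial_t^{j-1}R$, etc.), which for small $T$ is of the form $R_j(\mathcal{E}_{k-1}(T)) + \delta\mathcal{E}_k(T)$. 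Combining the coercive boundary term with these error bounds, and using a standard Grönwall/absorption step to move $\delta\mathcal{E}_k(T)$ contributions with small coefficient, yields $\Vert\partial\partial_t^{j}\Theta\Vert_{L^2(\partial\Omega_0^T)}^2 \lesssim \mathcal{E}_\ell(0) + R_j(\mathcal{E}_{k-1}(T)) + \delta\mathcal{E}_k(T)$.

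The main obstacle will be the bookkeeping at top order: making sure that the pieces of $F_j$ and of $H$ that contain derivatives of $\Theta$, $\Vert V\Vert^2$ or $R$ at the very highest level admissible are either genuinely absorbable into $\delta\mathcal{E}_k(T)$ (because the trace theorem or the fundamental theorem of calculus supplies a small constant) or are already covered by the elliptic estimates of Proposition \ref{elliptic prop} and Lemma \ref{M+1 control} — in particular, the curvature term $\partial_t^{j_6}R$ in $F_j$, which one has to control on the boundary via trace and then relate to $\Vert\partial_t^{\le j}R\Vert_{L^2(\Sigma_t)}$; this is the delicate point where the vorticity-induced reduction of derivative order (as flagged in the a priori estimates discussion) is what keeps everything closed.
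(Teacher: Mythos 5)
Your proposal follows essentially the same route as the paper: the energy identity of Lemma \ref{Theta boundary control} with the multiplier $Q=\partial_{t}+\alpha n$, substitution of the boundary equation from Lemma \ref{V boundary higher order lemma} to rewrite $D_{n}\partial_{t}^{k-1}\Theta$ as $-\gamma^{-1}\partial_{t}^{k+1}\Theta+F_{k-1}$ (absorbing the $\partial_{t}^{k+1}\Theta$ contribution into $\delta\mathcal{E}_{k}(T)$ by taking $T$ small), and control of $H_{k-1}$ and $F_{k-1}$ via the trace theorem, Proposition \ref{elliptic prop}, and the induction hypothesis. The only organizational difference is that the paper runs an explicit induction on $j$ with the base case $j=0$ handled directly by the trace theorem and the elliptic estimates; this does not change the substance of the argument.
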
	
\begin{proof}
	For $j=0$, using the trace theorem, we have
	\begin{equation*}
	    \Vert\partial\partial_{t}^{j}\Theta\Vert_{L^{2}\left(\partial\Omega_{0}^{T}\right)}^{2}\lesssim\int_{0}^{T}\Vert\partial\partial_{t}^{j}\Theta\Vert_{H^\frac{1}{2}\left(\Omega_{t}\right)}^{2}\mathrm{d}t.
	\end{equation*}
	Since $2\cdot(1+\frac{1}{2})+j\le k+2\le \ell +2$, the result follows from using Proposition $\ref{elliptic prop}$ and taking $T$ sufficiently small. Proceeding inductively, suppose that we have prove the statement for $j\le k-2$ and prove it for $j=k-1$. Using Lemma $\ref{Theta boundary control}$, we need to show that $\Vert H_{k-1}\Vert_{L^{2}\left(\Omega_{t}\right)}$ is bounded by $\mathcal{E}_{k}$. In view of Lemma $\ref{V higher order lemma}$, the top order terms are
	\begin{equation*}
	    \partial^{2}\partial_{t}^{k-2}\Theta\quad \text{and} \ \partial\partial_{t}^{k-2}R.
	\end{equation*}
	Since $k-2+2\cdot 2\le k +2$ and $k-2\le k-1$, the above terms are bounded using Proposition $\ref{elliptic prop}$. And other terms in $H_{k-1}$ can be bounded by the definition of $\mathcal{E}_{k}$. For the integral at the boundary in Lemma $\ref{Theta boundary control}$, we concentrate on the contribution of $\Vert \partial_{t}\partial_{t}^{k-1}\Theta\Vert_{L^{2}\left(\partial\Omega_{0}^{T}\right)}^{2}$ and $\Vert D_{n}\partial_{t}^{k-1}\Theta\Vert_{L^{2}\left(\partial\Omega_{0}^{T}\right)}^{2}$. The first term follows from the definition of $\mathcal{E}_{k-1}$. For the second term, we use Lemma $\ref{V boundary higher order lemma}$ to write
	\begin{equation*} 
	        D_{n}\partial_{t}^{k-1}\Theta^{I}=-\frac{1}{\gamma}\partial_{t}^{k+1}\Theta^{I}+F_{k-1}.
	\end{equation*} 
    The contribution of $\Vert \partial_{t}^{k+1}\Theta\Vert_{L^{2}(\partial\Omega_{0}^{T})}$ is of the desired form by taking $T$ small. Then we need to consider $\Vert F_{k-1}\Vert_{L^{2}(\partial\Omega_{0}^{T})}$. It is clearly that $\Vert \partial\partial_{t}^{k}\Vert V\Vert^{2}\Vert_{L^{2}(\partial\Omega_{0}^{T})}$ and $\Vert \partial_{t}^{k}\Theta\Vert_{L^{2}(\partial\Omega_{0}^{T})}$ can be bounded by $\mathcal{E}_{k-1}$. The contribution of $\Vert\partial_{t}^{k-2}R\Vert_{L^{2}(\partial\Omega_{0}^{T})}$ is of the desired form by using the trace theorem and Proposition $\ref{elliptic prop}$. And the desired form of other terms in $F_{k-1}$ follows from the induction hypothesis.   	
\end{proof}

We now turn to the proof of Proposition $\ref{main prop}$.
\begin{proof}[Proof of Proposition $\ref{main prop}$.] 
We prove the result inductively. First by Lemmas $\ref{V energy lemma}$, $\ref{V^2 energy lemma}$, $\ref{W energy lemma}$, and using elliptic and Sobolev estimates to bound lower-order terms in $L^{\infty}$, we have
\begin{equation*}
        \mathcal{E}_{0}\left(T\right)\le \mathcal{P}_{k}\left(\mathcal{E}_{\ell}\left(0\right)\right)+T\left(1+C_{1}\right)^{m},
\end{equation*}
for some $m>0$. Therefore, smallness of $T$ implies
\begin{equation*}
    \mathcal{E}_{0}\left(T\right)\le \mathcal{P}_{k}\left(\mathcal{E}_{\ell}\left(0\right)\right).
\end{equation*}
 Now suppose that if $T$ is sufficiently small
 \begin{equation*}
 \mathcal{E}_{j}\left(T\right)\le \mathcal{P}_{j}\left(\mathcal{E}_{\ell}\left(0\right)\right)
 \end{equation*}
 for all $j\le k-1$ and some $k\le \ell$. In this case, we will prove 
 \begin{equation*}
 \mathcal{E}_{k}\left(T\right)\le \mathcal{P}_{k}\left(\mathcal{E}_{\ell}\left(0\right)\right)
 \end{equation*} 
by taking $T$ even smaller.

\textbf{Step 1: } We show that given $\eta>0$ (small) if $T>0$ is sufficiently small then 
\begin{equation*}
    \sup_{0\le t\le T}\Vert \partial\partial_{t}^{k+1}\Vert V\Vert^{2}\Vert_{L^{2}\left(\Omega_{t}\right)}^{2}+\Vert \partial\partial_{t}^{k+1}\Vert V\Vert^{2}\Vert_{L^{2}\left(\partial\Omega_{0}^{T}\right)}^{2}\le \mathcal{P}_{k}\left(\mathcal{E}_{\ell}\left(0\right)\right)+\eta\mathcal{E}_{k}\left(T\right).
\end{equation*}	
In analogy to Lemma $\ref{V^2 energy lemma}$, we estimate the wave equation satisfied by $\partial_{t}^{k+1}\Vert V\Vert^{2}$. To do this, we only need to estimate 
\begin{equation*}
      \frac{2G^{'}\Vert V\Vert^{2}}{G}\partial_{t}^{k+3}\Vert V\Vert^{2}Q\partial_{t}^{k+1}\Vert V\Vert^{2}\sqrt{\abs{g}}\quad \text{and}\quad H_{k}Q\partial_{t}^{k+1}\Vert V\Vert^{2}\sqrt{\abs{g}}.
\end{equation*}
And other terms are of the desired form by taking $T$ to be small. For the first of above terms, we have
\begin{equation*}
    \begin{aligned}
        \frac{2G^{'}\Vert V\Vert^{2}}{G}\partial_{t}^{k+3}\Vert V\Vert^{2}Q\partial_{t}^{k+1}\Vert V\Vert^{2}\sqrt{\abs{g}}&=\partial_{t}\left(\frac{\Vert V\Vert^{2}G^{'}}{G}\left(\partial_{t}^{k+2}\Vert V\Vert^{2}\right)^{2}\sqrt{\abs{g}}\right)-\alpha\partial_{t}\left(\frac{2\Vert V\Vert^{2}G^{'}}{G}\partial_{t}^{k+2}\Vert V\Vert^{2}D_{n}\partial_{t}^{k+1}\Vert V\Vert^{2}\sqrt{\abs{g}}\right)\\
        &+\alpha D_{n}\left(\frac{\Vert V\Vert^{2}G^{'}}{G}\left(\partial_{t}^{k+2}\Vert V\Vert^{2}\right)^2\sqrt{\abs{g}}\right)+F^{'},
    \end{aligned}
\end{equation*} 
 with $F^{'}$ being a linear combination of $\partial_{t}^{k+2}\Vert V\Vert^{2}$, $D\partial_{t}^{k+1}\Vert V\Vert^{2}$ and $\partial_{t}^{k+1}\Vert V\Vert^{2}$ bounded by $\mathcal{E}_{k}$ for $T$ is small. Note that since $\frac{2G^{'}\Vert V\Vert^{2}}{G}$ is non-negative and $\alpha$ is sufficiently small, other terms can be absorbed in the integrals over $\Omega_{t}$ and $\Omega_{0}$. For the contibution of $H_{k}Q\partial_{t}^{k+1}\Vert V\Vert^{2}\sqrt{\abs{g}}$, except for $\partial^{2}\partial_{t}^{k}\Vert V\Vert^{2}Q\partial_{t}^{k+1}\Vert V\Vert^{2}\sqrt{\abs{g}}$ and $\partial^{2}\partial_{t}^{k-1}\Theta Q\partial_{t}^{k+1}\Vert V\Vert^{2}\sqrt{\abs{g}}$, the corresponding contribution can be handled by Cauchy-Schwarz, Proposition $\ref{elliptic prop}$, the induction hypothesis, and by taking $T$ small. Using the same argument and Lemma $\ref{2 V^2 estimate}$, the contribution of $\partial^{2}\partial_{t}^{k}\Vert V\Vert^{2}Q\partial_{t}^{k+1}\Vert V\Vert^{2}\sqrt{\abs{g}}$ can also be handled by $T\mathcal{E}_{k}$. Finally for terms invloving $\partial^{2}\partial_{t}^{k-1}\Theta\sqrt{\abs{g}}$, we need to perform a few integration by part. The corresponding term can be written as 
 \begin{equation*}
    \begin{aligned}
       F\cdot \nabla^{(2)}\partial_{t}^{k-1}\Theta Q\partial_{t}^{k+1}\Vert V\Vert^{2}&=\nabla\cdot\left(F\cdot \nabla\partial_{t}^{k-1}\Theta Q\partial_{t}^{k+1}\Vert V\Vert^{2}\right)-\partial_{t}\left(\left(\nabla Q\partial_{t}^{k}\Vert V\Vert^{2}\right)F\cdot \nabla\partial_{t}^{k-1}\Theta\right)\\
       &-Q\partial_{t}^{k+1}\Vert V\Vert^{2}\nabla F\cdot \nabla\partial_{t}^{k-1}\Theta-\left[\nabla Q, \partial_{t}\right]\partial_{t}^{k}\Vert V\Vert^{2} F\cdot \nabla\partial_{t}^{k-1}\Theta\\
       &+\left(\nabla Q\partial_{t}^{k}\Vert V\Vert^{2}\right)\partial_{t}F\cdot \nabla\partial_{t}^{k-1}\Theta+\nabla Q\partial_{t}^{k}\Vert V\Vert^{2} \cdot F\cdot \left[\partial_{t}, \nabla\right]\partial_{t}^{k-1}\Theta\\
       &+\nabla Q\partial_{t}^{k}\Vert V\Vert^{2}\cdot F\cdot\nabla\partial_{t}^{k-1}\Theta.
    \end{aligned}
 \end{equation*}
 Here $F$ consists of lower order terms. Integrating the above identity over $\Omega_{t}\times \left[0, \tau\right]$, the last five terms can be handled using Cauchy-Schwarz, Proposition $\ref{elliptic prop}$, Lemma $\ref{2 V^2 estimate}$, and taking T small. For the first line we integrate by parts. We can arrive at 
 \begin{equation*}
    \begin{aligned}
       &\int_{0}^{\tau}\int_{\Omega_{t}}\nabla\cdot\left(F\cdot \nabla\partial_{t}^{k-1}\Theta Q\partial_{t}^{k+1}\Vert V\Vert^{2}\right)-\partial_{t}\left(\left(\nabla Q\partial_{t}^{k}\Vert V\Vert^{2}\right)F\cdot \nabla\partial_{t}^{k-1}\Theta\right)\mathrm{d}x\mathrm{d}t\\
       &=\int_{\Omega_{\tau}}F\cdot \nabla\partial_{t}^{k-1}\Theta Q\partial_{t}^{k+1}\Vert V\Vert^{2}\mathrm{d}x+\int_{\Omega_{\tau}}\left(\nabla Q\partial_{t}^{k}\Vert V\Vert^{2}\right)F\cdot \nabla\partial_{t}^{k-1}\Theta\mathrm{d}x-\int_{\Omega_{0}}F\cdot \nabla\partial_{t}^{k-1}\Theta Q\partial_{t}^{k+1}\Vert V\Vert^{2}\mathrm{d}x\\
       &-\int_{\Omega_{0}}\left(\nabla Q\partial_{t}^{k}\Vert V\Vert^{2}\right)F\cdot \nabla\partial_{t}^{k-1}\Theta\mathrm{d}x+\int_{0}^{\tau}\int_{\partial\Omega_{t}}F\cdot\nabla_{n}\partial_{t}^{k-1}\Theta Q\partial_{t}^{k+1}\Vert V\Vert^{2}\mathrm{d}S\mathrm{d}t.
    \end{aligned}
 \end{equation*}
 Then, using Cauchy-Schwarz with a small constant and Lemma $\ref{2 V^2 estimate}$,  the first line on the right-hand side can be bounded. The contribution of the integrals over $\Omega_{0}$ can also be handled with the same argument. And the last term can be estimated using Cauchy-Schwarz and Lemma $\ref{V boundary estimate}$.

\textbf{Step 2: }We show that given $\eta>0$ (small) if $T>0$ is sufficiently small then
\begin{equation*}
     \abs{\int_{0}^{T}\int_{\Omega_{t}}\left(\Box\partial_{t}^{k}\Theta\right)\left(\partial_{t}^{k+1}\Theta\right)\sqrt{\abs{g}}\mathrm{d}x\mathrm{d}t}\le \mathcal{P}_{k}\left(\mathcal{E}_{\ell}\left(0\right)\right)+\eta\mathcal{E}_{k}\left(T\right).
\end{equation*}
We preform the similar argument in Step 1. Here we need to focus on the contribution of $\partial^{2}\partial_{t}^{k-1}\Theta$. And other terms can be handled using Cauchy-Schwarz and by taking $T$ small. For the contribution of $\partial^{2}\partial_{t}^{k-1}\Theta$, we have
\begin{equation*}
   \begin{aligned}
      F\cdot \nabla^{(2)}\partial_{t}^{k-1}\Theta\partial_{t}^{k+1}\Theta&=\nabla\cdot\left(F\cdot \nabla\partial_{t}^{k-1}\Theta \partial_{t}^{k+1}\Theta\right)-\partial_{t}\left(F\cdot \nabla\partial_{t}^{k-1}\Theta\nabla\partial_{t}^{k}\Theta\right)\\
      &-\nabla F\cdot \nabla\partial_{t}^{k-1}\Theta\partial_{t}^{k+1}\Theta-\left[\nabla, \partial_{t}\right]\partial_{t}^{k}\Theta \cdot F\cdot \nabla\partial_{t}^{k-1}\Theta \\
      &+\partial_{t}F\cdot \nabla\partial_{t}^{k-1}\Theta\nabla\partial_{t}^{k}\Theta+\nabla \partial_{t}^{k}\Theta\cdot F\cdot \left[\partial_{t}, \nabla\right]\partial_{t}^{k-1}\Theta\\
      &+\nabla\partial_{t}^{k}\Theta\cdot F\cdot\nabla\partial_{t}^{k}\Theta.
   \end{aligned}
\end{equation*}
All of the above terms can be handled using similar arguments as in Step 1.

\textbf{Step 3: }We show that given $\eta>0$ (small) if $T>0$ is sufficiently small then
\begin{equation*}
\sup_{0\le t\le T}\Vert \partial_{t}^{k}R\Vert_{L^{2}\left(\Sigma_{t}\right)}^{2}\le \mathcal{P}_{k}\left(\mathcal{E}_{\ell}\left(0\right)\right)+\eta\mathcal{E}_{k}\left(T\right).
\end{equation*}
Since $R$ can be expressed in terms of $W$ and $R_{\mu\nu}=GV_{\mu}V_{\nu}-pg_{\mu\nu}+\frac{G}{2}\Vert V\Vert^{2}g_{\mu\nu}$, we can prove this estimate with $R$ replaced by $W$. The result follows from using Lemma $\ref{W energy lemma}$ and taking $T$ small, where $W$ satisfies the equation in Lemma $\ref{W higher order lemma}$. We just need to focus on $D\partial_{t}^{k-1}W$. Note that it is of the desired form using Proposition $\ref{elliptic prop}$.

\textbf{Step 4: }We show that given $\eta>0$ (small) if $T>0$ is sufficiently small then
\begin{equation*}
\sup_{0\le t\le T}\left(\Vert \partial\partial_{t}^{k}\Theta\Vert_{L^{2}\left(\Omega_{t}\right)}^{2}+\Vert \partial_{t}^{k+1}\Theta\Vert_{L^{2}\left(\partial\Omega_{t}\right)}^{2}\right)\le \mathcal{P}_{k}\left(\mathcal{E}_{\ell}\left(0\right)\right)+\eta\mathcal{E}_{k}\left(T\right).
\end{equation*}
We apply Lemma $\ref{V energy lemma}$ to $\partial_{t}^{k}\Theta$. Here $F$ is described in Lemma $\ref{V higher order lemma}$ and $f$ is in the form of Lemma $\ref{V higher order lemma}$. Except for $F$ and $f$, all other terms on the right-hand side of $(\ref{energy equ1.1})$ can be estimated by making $T$ sufficiently small. The contribution of $F$ has already been dealt with in Step 2. For the contribution of $f$, we need to estimate $\Vert \partial_{t}^{k-1}R\Vert_{L^{2}\left(\partial\Omega_{0}^{T}\right)}$. To do this, we use the trace theorem and Proposition $\ref{elliptic prop}$. The induction claim $\mathcal{E}_{k}\left(T\right)\lesssim\mathcal{P}\left(\mathcal{E}_{\ell}\left(0\right)\right)$ follows by choosing $\eta$ sufficiently small in Steps $1-4$. Then we finish the proof of Proposition $\ref{main prop}$.
\end{proof}

\section{Linear theory and iteration}\label{sec3}	
	In this section we discuss the linear existence theory for the equations we use for the interation. The linear equations for the geometric quantities are of first order hyperbolic or transport types.
	
\subsection{The linear existence theory of the equation for $R$. }Since the curvature equation satisfies a first order symmetric hyperbolic system, its linear existence theory is known. 
	
    In the following, we will deal with the problem in the frame with slightly modified, with $\check{e}_{0}=\hat{V}=\partial_{t}$ and
    \begin{equation*}
       \begin{aligned}
           \check{e}_{3}&=\frac{\left(e_{3}+g(e_{3},\check{e}_{0})\check{e}_{0}\right)}{\Vert e_{3}+g(e_{3},\check{e}_{0})\check{e}_{0}\Vert},\\
           \check{e}_{2}&=\frac{\left(e_{2}+g(e_{2},\check{e}_{0})\check{e}_{0}-g(e_{2},\check{e}_{3})\check{e}_{3}\right)}{\Vert e_{2}+g(e_{2},\check{e}_{0})\check{e}_{0}-g(e_{2},\check{e}_{3})\check{e}_{3}\Vert},\\
           \check{e}_{1}&=\frac{\left(e_{1}+g(e_{1},\check{e}_{0})\check{e}_{0}-g(e_{1},\check{e}_{2})\check{e}_{2}-g(e_{1},\check{e}_{3})\check{e}_{3}\right)}{\Vert e_{1}+g(e_{1},\check{e}_{0})\check{e}_{0}-g(e_{1},\check{e}_{2})\check{e}_{2}-g(e_{1},\check{e}_{3})\check{e}_{3}\Vert}.           
       \end{aligned}
    \end{equation*}
    We denote contractions with $\check{e}_{I}$ with a check in a slight abuse of notation. So for instance we write
    \begin{equation*}
        \check{F}_{IJ}=F(\check{e}_{I},\check{e}_{J})=R(\check{e}_{I}, \check{e}_{J}, \check{X}_{A}, \check{X}_{B}),\quad \check{\Gamma}_{IJ}^{K}\check{e}_{K}=\nabla_{\check{e}_{I}}\check{e}_{J},\quad  \check{D}_{I}=\check{e}_{I}^{\mu}\partial_{\mu},\quad \mathrm{etc.}
    \end{equation*}
    We require that the components $R(\check{X}_{A},\check{X}_{B},\check{X}_{C},n)$ are defined by 
    \begin{equation*}
      R(\check{X}_{A},\check{X}_{B},\check{X}_{C},n)=-R(\check{X}_{A},\check{X}_{B},n,\check{X}_{C})=R(\check{X}_{C},n,\check{X}_{A},\check{X}_{B}),
    \end{equation*}
    the components $R(\check{X}_{A},n,\check{X}_{B},n)$, $A\le B$ are defined by 
    \begin{equation*}
        \begin{aligned}
           R(\check{X}_{A},n,\check{X}_{B},n)&=-R(n,\check{X}_{A},\check{X}_{B},n)\\
           &=\chi_{\Omega}\left(\frac{1}{2}G\Vert V\Vert^{2}m_{AB}-pm_{AB}+Gg(\Theta^{I}e_{I},\check{X}_{A})g(\Theta^{I}e_{I},\check{X}_{B})\right)\\
           &-\sum_{C=0}^{2}\epsilon_{C}R(\check{X}_{A},\check{X}_{C},\check{X}_{B},\check{X}_{C})           
        \end{aligned}     
    \end{equation*}
    and the components $R(\check{X}_{B},n,\check{X}_{A},n)$, $A<B$ are defined by
    \begin{equation*}
        R(\check{X}_{B},n,\check{X}_{A},n)=R(\check{X}_{A},n,\check{X}_{B},n).
    \end{equation*}         
    Note that since $g(e_{J},\check{e}_{0})=\epsilon_{J}\hat{\Theta}^{J}$, the frame $\left\{\check{e}_{I}\right\}$ is a linear combination of $\left\{\check{e}_{I}\right\}$ with coefficients which depend algebraically on $\hat{\Theta}$, and therefore have the same regularity properties. The electric and magnetic parts of $\check{F}$ are denote by $\check{E}$ and $\check{H}$ and are given by 
    \begin{equation*}
           \check{E}_{I}=\check{F}_{0I},\quad \check{H}^{I}=-\frac{1}{2}\sum_{J,K=1}^{3}\epsilon^{IJK}\check{F}_{JK}, \quad I=1, 2, 3.
    \end{equation*}
    
    We then define the following first order differential operators (where $\check{\nabla}=\check{D}+\check{\Gamma}$)
    \begin{equation*}
       \begin{aligned}
          &\check{\mathrm{div}}\check{E}:=\sum_{\tilde{I}=1}^{3}\check{\nabla}_{\tilde{I}}\check{E}_{\tilde{I}},\quad \check{\mathrm{div}}\check{H}:=\sum_{\tilde{I}=1}^{3}\check{\nabla}_{\tilde{I}}\check{H}^{\tilde{I}},\\
          &(\check{\mathrm{curl}}\check{E})_{\tilde{I}}:=\sum_{\tilde{J}, \tilde{K}=1}^{3}\epsilon_{\tilde{I}\tilde{J}\tilde{K}}\check{\nabla}_{\tilde{J}}\check{E}_{\tilde{K}}, \quad (\check{\mathrm{curl}}\check{H})^{\tilde{I}}:=\sum_{\tilde{J}, \tilde{K}=1}^{3}\epsilon_{\tilde{I}\tilde{J}\tilde{K}}\check{\nabla}_{\tilde{J}}\check{H}^{\tilde{K}},
       \end{aligned}
    \end{equation*}
    where 
    \begin{equation*}
       \epsilon_{\tilde{I}\tilde{J}\tilde{K}}=\begin{cases}
         1, &\quad (\tilde{I}\tilde{J}\tilde{K})\ \text{is an even permutation of} \ (123)\\
         -1,&\quad (\tilde{I}\tilde{J}\tilde{K})\ \text{is an odd permutation of} \ (123)
       \end{cases}.
    \end{equation*}
   Although these operators are not strictly geometric divergence and curl operators, we will continue to refer to them as such for convenience. The reason for decomposing $F$ using  $\left\{\check{e}_{I}\right\}$ rather than $\left\{e_{I}\right\}$ is that, in proving elliptic estimates for $\check{E}$ and $\check{H}$, we need to use the divergence equations satisfied by them. To demonstrate that the divergence of $\check{E}$ and $\check{H}$ are lower order during the iteration, it is more convenient if $\check{e}_{0}$ coincides with $\partial_{t}$. We turn to the details. The equations satisfied by $\check{E}$ and $\check{H}$ are 
    \begin{equation}\label{modified frame maxwell system}
         \partial_{t}\check{E}+\check{\mathrm{curl}}\check{H}=\check{\mathcal{I}},\quad \partial_{t}\check{H}-\check{\mathrm{curl}}\check{E}=\check{J}^{\ast},
    \end{equation}
    where $\check{\mathcal{I}}$ and $\check{\mathcal{J}}$ are defined as in $(\ref{I component1})$, $(\ref{I component2})$, $(\ref{J component})$ with $D, \Theta, \Gamma, F, X, R$ being replaced by $\check{D}, \check{\Theta}, \check{\Gamma}, \check{F}, \check{X}, \check{R}$ respectively. In terms of $\check{W}=(\check{E}, \check{H})$ and $\check{K}=(\check{\mathcal{I}}, \check{\mathcal{J}^{\ast}})$, equation $(\ref{modified frame maxwell system})$ becomes the first order symmetric hyperbolic system
    \begin{equation}\label{new hyperbolic system}
        \sum_{\nu=0}^{3}\check{\mathcal{B}}^{\mu}\partial_{\mu}\check{W}=\check{\mathcal{K}}, \quad \check{\mathcal{B}}^{0}=1+\sum_{\tilde{I}=1}^{3}\check{e}_{\tilde{I}}^{0}\mathcal{A}^{\tilde{I}},\quad \check{\mathcal{B}}^{j}:=\sum_{\tilde{I}=1}^{3}\check{e}_{\tilde{I}}^{j}\mathcal{A}^{\tilde{I}},\  j=1, 2, 3.
    \end{equation}
We will establish the iterative result for $\check{W}$ as Proposition 3.4 in \cite{Miao1}.
\begin{proposition}\label{local existence for R}
	Suppose $\check{e}_{\tilde{I}}, \tilde{I}=1, 2, 3,$ and $\check{\mathcal{K}}$ satisfy the following conditions:
	\begin{equation}\label{W condition}
	    \begin{aligned}
	       1-\left(\sum_{\tilde{I}=1}^{3}(\check{e}_{\tilde{I}}^{0})^{2}\right)^{\frac{1}{2}}\ge \kappa>0,&\\
	       \partial_{t}^{k}\check{e}_{I}\in L^{\infty}([0,T];L^{2}(\mathbb{R}^{3})),&\quad k\le K,\\
	       \partial^{p}\partial_{t}^{k}\check{e}_{I}\in L^{\infty}([0,T];L^{2}(\Omega_{0})\cap L^{2}(\Omega_{0}^{c})),& \quad 2p+k\le K,\\
	       \partial_{t}^{k}\check{\mathcal{K}}\in L^{2}([0,T]\times \mathbb{R}^{3}),&\quad k\le K\\
	       \partial_{t}^{k}\check{\mathcal{K}}\in L^{\infty}([0,T]; H^{p-1}(\Omega_{0})\cap H^{p-1}(\Omega_{0}^{c})),&\quad 2p+k\le K,\\
	       \partial_{t}^{\ell}(\check{\mathrm{div}}\check{\mathcal{K}}_{\check{E}}, \check{\mathrm{div}}\check{\mathcal{K}}_{\check{H}})\in L^{\infty}([0,T]; H^{p-1}(\Omega_{0})\cap H^{p-1}(\Omega_{0}^{c})),&\quad 2p+k\le K.	       
	    \end{aligned}
	\end{equation} 
Let $\check{\mathcal{B}}^{\mu}$ be defined as in $(\ref{new hyperbolic system})$. Then there is a unique solution $\check{W}=(\check{E}, \check{H})$ to 
\begin{equation}\label{new hyperbolic system1}
     \sum_{\nu=0}^{3}\check{\mathcal{B}}^{\mu}\partial_{\mu}\check{W}=\check{\mathcal{K}}=(\check{\mathcal{K}}_{\check{E}}, \check{\mathcal{K}}_{\check{H}}),    
\end{equation}	
which in addition satisfies 
\begin{equation}\label{space estimate for W}
     \begin{aligned}
         &\sup_{t\in[0,T]}\left(\Vert \partial_{t}^{k}\check{W}(t)\Vert_{L^{2}(
         	\mathbb{R}^{3})}+\Vert \partial\partial_{t}^{k-1}\check{W}(t)\Vert_{L^{2}(\mathbb{R}^{3})}\right)\\
         &\le C_{1}e^{C_{2}T}\left(\Vert \partial_{t}^{k}\check{W}(0)\Vert_{L^{2}(\mathbb{R}^{3})}+\Vert \partial\partial_{t}^{k-1}\check{W}(0)\Vert_{L^{2}(\mathbb{R}^{3})}+\Vert \partial_{t}^{k}\mathcal{K}\Vert_{L^{2}([0,T]\times \mathbb{R}^{3})}\right),
     \end{aligned}
\end{equation}
for $k\le K$. Moreover, there are functions $P_{k}$ depending polynomially on their arguments such that for $k\le K$ and $2p+k\le K$, and $\tau\le T$,
\begin{equation}\label{time estimate for W}
    \begin{aligned}
        \sup_{t\in[0,\tau]}&\left(\Vert \partial^{p}\partial_{t}^{k}\check{W}(t)\Vert_{L^{2}(\Omega_{0})}+\Vert \partial^{p}\partial_{t}^{k}\check{W}(t)\Vert_{L^{2}(\Omega_{0}^{c})}\right)\\
        \le P_{k}&\left(\sup_{t\le \tau}\left(\sum_{\ell\le 2p+k}\Vert \partial_{t}^{\ell}\check{W}(t)\Vert_{L^{2}(\mathbb{R}^{3})}+\sum_{\ell\le 2p+k-1}\Vert \partial \partial_{t}^{\ell}\check{W}(t)\Vert_{L^{2}(\mathbb{R}^{3})}\right),  \right.\\
        &\sum_{\ell\le k}\Vert \partial_{t}^{\ell}\check{\mathcal{K}}\Vert_{L^{\infty}([0,T];H^{p-1}(\Omega_{0}))},\sum_{\ell\le k}\Vert \partial_{t}^{\ell}\check{\mathcal{K}}\Vert_{L^{\infty}([0,T];H^{p-1}(\Omega_{0}^{c}))},\\
        &\left.\sum_{\ell\le k}\Vert \partial_{t}^{\ell}(\check{\mathrm{div}}\check{\mathcal{K}}_{\check{E}}, \check{\mathrm{div}}\check{\mathcal{K}}_{\check{H}})\Vert_{L^{\infty}([0,T];H^{p-1}(\Omega_{0}))}, \sum_{\ell\le k}\Vert \partial_{t}^{\ell}(\check{\mathrm{div}}\check{\mathcal{K}}_{\check{E}}, \check{\mathrm{div}}\check{\mathcal{K}}_{\check{H}})\Vert_{L^{\infty}([0,T];H^{p-1}(\Omega_{0}^{c}))} \right).        
    \end{aligned}
\end{equation}
Here the constants in $(\ref{space estimate for W})$ and $(\ref{time estimate for W})$ depend on $\kappa$ and the norms of $\check{e}_{I}$ appearing in $(\ref{W condition})$.
\end{proposition}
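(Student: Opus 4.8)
The plan is to reduce Proposition \ref{local existence for R} to the classical $L^2$ theory for linear symmetric hyperbolic systems and then bootstrap in the spatial direction using the divergence constraints. First I would observe that the coefficient matrix $\check{\mathcal{B}}^0 = 1 + \sum_{\tilde I}\check{e}_{\tilde I}^0\mathcal{A}^{\tilde I}$ is symmetric (each $\mathcal{A}^{\tilde I}$ is symmetric by inspection) and, thanks to the hypothesis $1 - (\sum_{\tilde I}(\check{e}_{\tilde I}^0)^2)^{1/2}\ge \kappa > 0$ together with the explicit eigenvalue structure of $\sum_{\tilde I}\xi_{\tilde I}\mathcal{A}^{\tilde I}$ (whose eigenvalues are $0,0,\pm|\xi|,\pm|\xi|$), we get $\check{\mathcal{B}}^0 \ge \kappa\, \mathrm{Id} > 0$ uniformly. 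Hence $\sum_\mu \check{\mathcal{B}}^\mu\partial_\mu \check W = \check{\mathcal{K}}$ is a genuine symmetric hyperbolic system with coefficients whose regularity (in particular $\partial_t^k\check{e}_I\in L^\infty_t L^2_x$ with $k\le K$, $K$ large, hence $\partial_t^k\check{e}_I\in L^\infty_{t,x}$ for $k\le K-2$ by Sobolev embedding on $\mathbb{R}^3$ after also invoking the mixed $\partial^p\partial_t^k$ bounds) is more than enough to invoke the standard existence, uniqueness and energy-estimate theorem (e.g.\ as in Taylor or Friedrichs--Lax--Kato). This yields a unique $\check W$ and, by the usual $\partial_t^k$-commuted energy estimate, the bound \eqref{space estimate for W}; the appearance of $\|\partial\partial_t^{k-1}\check W\|_{L^2}$ on the right is handled by noting that the equation lets one trade one spatial derivative for one time derivative plus lower order, so the full first-order $L^2(\mathbb{R}^3)$ norm of $\partial_t^k\check W$ is controlled.

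Next I would address the interior/exterior spatial regularity estimate \eqref{time estimate for W}. The key point is that $\check W = (\check E,\check H)$ satisfies not just the evolution system \eqref{modified frame maxwell system} but also divergence equations of the schematic form $\check{\mathrm{div}}\,\check E = \rho_{\check E}$, $\check{\mathrm{div}}\,\check H = \rho_{\check H}$ and curl equations analogous to \eqref{curl E}--\eqref{curl H}, with right-hand sides that, after the modified frame is chosen so $\check e_0 = \partial_t$, depend only on $\check e_I$, $D\check e_I$, $\check\Gamma$, $\check\Theta$, $D\check\Theta$, $\check F$ and $\partial_t\check F$ — i.e.\ they cost at most one time derivative of $\check W$ and no net spatial derivative. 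On the fixed domains $\Omega_0$ and $\Omega_0^c$ (time-independent in Lagrangian coordinates) one then runs the div-curl elliptic estimate of Lemmas \ref{elliptic estimate for R1}--\ref{elliptic estimate for R2} to each $\partial_t^k\check W$: this bounds $\|\partial\partial_t^k\check W\|_{L^2(\Omega_0)}$ by $\|\partial_t^k\check W\|_{L^2(\Omega_0)}$ plus the $L^2$ norms of the div and curl sources, which via \eqref{modified frame maxwell system} are $\|\partial_t^{k+1}\check W\|_{L^2}$ plus lower-order terms. Iterating this trade — each additional spatial derivative paid for by one time derivative and harmless frame/source terms — and organizing the induction on $2p+k$, one arrives at \eqref{time estimate for W} with $P_k$ a polynomial in exactly the listed quantities; the constants depend on $\kappa$ and on the $\check e_I$-norms in \eqref{W condition} precisely because those enter the ellipticity and the lower-order source terms.

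The step I expect to be the main obstacle — or at least the most delicate bookkeeping — is verifying that the divergence sources $\check{\mathrm{div}}\,\check{\mathcal{K}}_{\check E}$, $\check{\mathrm{div}}\,\check{\mathcal{K}}_{\check H}$ really do not cost a net derivative of $\check W$, so that the elliptic iteration closes with the stated loss structure. This is exactly where the choice of the \emph{modified} frame $\{\check e_I\}$ with $\check e_0 = \partial_t$ matters: with $\check e_0$ adapted to the foliation, the constraint propagation computation (differentiating the divergence of \eqref{modified frame maxwell system} and using the curl equations together with the second Bianchi identity, which is where $\partial_t^\ell(\check{\mathrm{div}}\check{\mathcal{K}}_{\check E},\check{\mathrm{div}}\check{\mathcal{K}}_{\check H})$ appears as data in \eqref{W condition}) produces only terms involving $\check\Gamma$, $\partial\check\Gamma$, $\check e_I$, $\partial\check e_I$ contracted against $\check W$ and $\partial_t\check W$, with no uncompensated $\partial^2\check W$. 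Once this is checked, everything else is a standard, if lengthy, energy-plus-elliptic induction, and one concludes by invoking the local existence theorem for the symmetric hyperbolic system to produce the solution and then the a priori estimates above to certify \eqref{space estimate for W} and \eqref{time estimate for W}. This is the direct analogue of Proposition 3.4 in \cite{Miao1}, and I would follow that argument \emph{mutatis mutandis}, pointing out only the places where the general equation of state (through $G$, $G'$, $p$) and the non-zero vorticity $\omega$ modify the source terms $\check{\mathcal{I}}$, $\check{\mathcal{J}}$ — but since those modifications affect only the zeroth- and first-order (in $\check W$) structure of $\check{\mathcal{K}}$ and its divergence, they are absorbed harmlessly into the polynomial $P_k$.
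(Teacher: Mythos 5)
Your proposal is correct and follows exactly the route the paper intends: the paper supplies no proof of this proposition, deferring entirely to Proposition 3.4 of \cite{Miao1}, and your reconstruction --- positivity of $\check{\mathcal{B}}^{0}$ from the $\kappa$-condition together with the eigenvalue structure of $\sum\xi_{\tilde{I}}\mathcal{A}^{\tilde{I}}$, the $\partial_{t}$-commuted energy estimate for $(\ref{space estimate for W})$, and the div--curl elliptic bootstrap on $\Omega_{0}$ and $\Omega_{0}^{c}$ (fed by the divergence constraints, whose propagation is precisely why $\check{\mathrm{div}}\check{\mathcal{K}}_{\check{E}}$, $\check{\mathrm{div}}\check{\mathcal{K}}_{\check{H}}$ appear as hypotheses) for $(\ref{time estimate for W})$ --- is the standard argument behind that result. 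The only caveats are minor bookkeeping: the Sobolev index in your $L^{\infty}$ bound on $\partial_{t}^{k}\check{e}_{I}$ should be roughly $k\le K-4$ rather than $K-2$, and running the div--curl lemmas on the subdomains (rather than on all of $\Sigma_{t}$ as in Lemmas \ref{elliptic estimate for R1}--\ref{elliptic estimate for R2}) produces boundary trace terms that must be absorbed via the trace theorem; neither affects the structure of the proof.
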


\subsection{The linear existence theory of the equation for $\Theta$. } First, we will establish the weak formulation for the fluid equations. Let us recall that
\begin{equation}\label{form V equation}
   \begin{cases}
      \square \Theta =F   \ &\text{in} \ \Omega,\\
      (\partial_{t}^{2}+\gamma D_{n})\Theta=f \ &\text{on} \ \partial\Omega,
   \end{cases}
\end{equation}   	
where $F$ and $f$ denote the right hand sides of $(\ref{2V interior frame equ})$ and $(\ref{2V boundary equ})$ respectively. Assume that all functions are smooth, we can derive the following weak formulation
\begin{equation}\label{integration equ of V}
   \begin{aligned}
      \int_{\partial\Omega_{0}}\frac{1}{\gamma}f\phi\mathrm{d}S-\int_{\Omega_{0}}F\phi \mathrm{d}x&=\int_{\Omega_{0}}-g^{tt}\partial_{t}^{2}\Theta\phi\mathrm{d}x+\int_{\partial\Omega_{0}}\frac{1}{\gamma}\partial_{t}^{2}\Theta\phi\mathrm{d}S+\int_{\Omega_{0}}g^{ab}\partial_{a}\Theta\partial_{b}\phi\mathrm{d}x\\
      &+2\int_{\Omega_{0}}g^{ta}\partial_{t}\Theta\partial_{a}\phi\mathrm{d}x-\int_{\partial\Omega_{0}}g^{tr}\partial_{t}\Theta\phi\mathrm{d}S-\frac{1}{2}\int_{\Omega_{0}}g^{\alpha\beta}\partial_{\beta}\Theta\phi\partial_{\alpha}\log\abs{g}\mathrm{d}x\\
      &-\int_{\Omega_{0}}\partial_{\alpha}\Theta\phi\partial_{t}g^{t\alpha}\mathrm{d}x+\int_{\Omega_{0}}\partial_{t}\Theta\phi\partial_{a}g^{ta}\mathrm{d}x,
   \end{aligned}
\end{equation}
where $\phi$ is a test function.

We denote $(\cdot,\cdot)$ the pairing of $(H^{1})^{\ast}$ with its dual space $H^{1}$. Then a bounded linear map $\Phi: H^{1}(\Omega_{0})\to (H^{1}(\Omega_{0}))^{\ast}$ can be defined by
\begin{equation*}
      \left(\Phi(u),\phi\right):=\left\langle-g^{tt}u,\phi\right\rangle+\llangle\gamma^{-1}\mathrm{tru},\mathrm{tr\phi}\rrangle, \ \phi\in H^{1}(\Omega_{0}).
\end{equation*} 
Here $\left\langle \cdot,\cdot \right \rangle$ denotes the inner product in $L^{2}(\Omega_{0})$ with respect to $\mathrm{d}x$, $\llangle\cdot,\cdot\rrangle$ denotes the inner product in $L^{2}(\partial\Omega_{0})$ with respect to the induced Euclidean measure $\mathrm{d}S$ and $\mathrm{tr}$ denotes the Sobolev trace operator. To simplify the notation, we use the following bilinear forms
\begin{equation}\label{bilinear forms}
    \begin{aligned}
       &B: H^{1}(\Omega_{0})\times H^{1}(\Omega_{0})\to\mathbb{R},\qquad C: L^{2}(\Omega_{0})\times H^{1}(\Omega_{0})\to \mathbb{R}, \qquad D, E: L^{2}(\partial\Omega_{0})\times H^{1}(\Omega_{0})\to \mathbb{R},\\
       &B(u,v):=\left\langle g^{ab}\partial_{a}u,\partial_{b}v\right\rangle-\frac{1}{2}\left\langle \partial_{a}u, vg^{a\alpha}\partial_{\alpha}\log\abs{g}\right\rangle-\left\langle\partial_{a}u, v\partial_{t}g^{ta}\right\rangle+\left\langle u, v\partial_{t}^{2}g^{tt}\right\rangle,\\
       &C(u,v):=2\left\langle u, g^{ta}\partial_{a}v\right\rangle-\frac{1}{2}\left\langle u, vg^{t\alpha}\partial_{\alpha}\log\abs{g}\right\rangle+\left\langle u, v\partial_{a}g^{ta}\right\rangle+\left\langle u ,v\partial_{t}g^{tt}\right\rangle,\\
       &D(u,v):=-\llangle u, g^{tr}\mathrm{tr}v\rrangle-2\llangle u, (\gamma^{-1})^{'}\mathrm{tr}v\rrangle,\\
       &E(u,v):=-\llangle u, (\gamma^{-1})^{''}\mathrm{tr}v\rrangle.
    \end{aligned}   
\end{equation}
For any $\Theta: [0,T]\to H^{1}(\Omega_{0})$ satisfying
\begin{equation}\label{initial assumption1}
     \begin{aligned}
         &\Theta\in L^{2}([0,T];H^{1}(\Omega_{0})),\quad \Theta^{'}\in L^{2}([0,T];L^{2}(\Omega_{0})), \quad (\mathrm{tr}\Theta)^{'}\in L^{2}([0,T];L^{2}(\partial\Omega_{0})),\\
         & \Phi(\Theta), \Phi(\Theta)^{'}, \Phi(\Theta)^{''}\in L^{2}([0,T];(H^{1}(\Omega_{0}))^{\ast}),
     \end{aligned}
\end{equation}	
let 
\begin{equation}\label{L definition}
    \mathcal{L}(\Theta, v):=B(\Theta,v)+C(\Theta^{'},v)+D((\mathrm{tr}\Theta)^{'})+E(\mathrm{tr}\Theta,v).
\end{equation}
Then for almost every $t\in[0,T]$ the weak equation $(\ref{integration equ of V})$ becomes
\begin{equation}\label{simplify weak equ V}
    (\Phi(\Theta)^{''},v)+\mathcal{L}(\Theta,v)=\llangle \gamma^{-1}f, \mathrm{tr}v \rrangle-\left\langle F,v\right\rangle, \forall v\in H^{1}(\Omega_{0}).
\end{equation}
To complete the formulation of the weak problem, we require that 
\begin{equation}\label{intial statemant for V}
  \begin{aligned}
      &\Theta(0)=\theta_{0},\quad \text{in}\quad L^{2}(\Omega_{0}),\\
      &(\Phi(\Theta)^{'}(0),v)=\left\langle \theta_{1}, v\right\rangle+\llangle \tilde{\theta}_{1},\mathrm{tr}v\rrangle, \quad \forall v\in H^{1}(\Omega_{0}),
  \end{aligned}
\end{equation}
for given initial data 
\begin{equation*}
    \theta_{0}\in H^{1}(\Omega_{0}),\quad \theta_{1}\in L^{2}(\Omega_{0}),\quad \tilde{\theta}_{1}\in L^{2}(\partial\Omega_{0}).
\end{equation*}	

Next we consider the equations obtained by commuting $\partial_{t}$ derivatives. Let us denote the right-hand sides of the system $(\ref{form V equation})$ by $f_{0}=\gamma^{-1}f$ and $F_{0}=-F$. Similarly, we use $f_{k}$ and $F_{k}$ to denote the right-hand sides of the interior and boundary equations after commuting $\partial_{t}^{k}$, and let $\Theta_{k}$ be the $k$-times differentiated unknown.

We need to concentrate on the term $\nabla \omega$ in $F_{0}$ and the term $\omega$ in $f_{0}$. Then if $k=1$, $f_{1}$ is a linear combination of $\Gamma$, $D\partial_{t}^{2}\Vert V\Vert^{2}$, $D\partial_{t}\Vert V\Vert^{2}$, $\partial_{t}^{2}\Vert V\Vert^{2}$, $D\Theta$, $\partial_{t}^{2}\Theta$, $\partial_{t}\Theta$ and $R$ using $(\ref{transport Ch})$, $\ref{transport w}$ and $(\ref{2V boundary equ})$. For $F_{1}$, we need to consider the term $\epsilon_{I}\epsilon_{K}\partial_{t}\nabla_{K}\omega_{KI}$. 
\begin{equation*}
     \epsilon_{I}\epsilon_{K}\partial_{t}\nabla_{K}\omega_{KI}=\epsilon_{I}\epsilon_{K}\frac{1}{\Vert V\Vert}\left[\nabla_{V},\nabla_{K}\right]\omega_{KI}+\epsilon_{I}\epsilon_{K}\frac{1}{\Vert V\Vert}\nabla_{K}\nabla_{V}\omega_{KI}.
\end{equation*}
Then $\epsilon_{I}\epsilon_{K}\partial_{t}\nabla_{K}\omega_{KI}$ is a linear combination of $R$, $D^{2}\Theta$ and $D\Theta$ by $(\ref{transport w})$. So $F_{1}$ is a linear combination of $\Gamma$, $D\Gamma$, $D^{2}\Theta$, $\partial_{t}\Theta$, $D\partial_{t}\Theta$, $D\partial_{t}^{2}\Vert V\Vert^{2}$, $D\partial_{t}\Vert V\Vert^{2}$, $D\Vert V\Vert^{2}$, $\partial_{t}\Vert V\Vert^{2}$, $DR$ and $R$. Let 
\begin{equation}\label{lower terms note}
    \begin{aligned}
       \left\langle\mathcal{C}(\Theta),v\right\rangle:=&-\frac{1}{2}\left\langle\partial_{a}\Theta,v\partial_{t}(g^{a\alpha}\partial_{\alpha}\log\abs{g})\right\rangle-\left\langle\partial_{a}\Theta,v\partial_{t}^{2}g^{t\alpha}\right\rangle\\
       &-\frac{1}{2}\left\langle\Theta^{'},v\partial_{t}(g^{t\alpha}\partial_{\alpha}\log\abs{g})\right\rangle+\left\langle \Theta^{'},v\partial_{t}\partial_{a}g^{ta}\right\rangle,\\
       \left\langle\mathcal{C}^{a}(\Theta),\partial_{a}v\right\rangle:=&\left\langle\partial_{t}g^{ab}\partial_{b}\Theta,\partial_{a}v\right\rangle+2\left\langle\Theta^{'},(\partial_{t}g^{ta})\partial_{a}v\right\rangle,\\
       \left\langle\tilde{\mathcal{C}}(\Theta),v\right\rangle:=&\left\langle\Theta^{'},-v\partial_{t}g^{tt}\right\rangle,\\
        \left\langle\tilde{\tilde{\mathcal{C}}}(\Theta),v\right\rangle:=&\left\langle\Theta^{'},-v\partial_{t}^{2}g^{tt}\right\rangle,\\
        \llangle\mathcal{C}_{\mathcal{B}}(\Theta),\mathrm{tr}v\rrangle:=&-\llangle(\mathrm{tr}\Theta)^{'},(\partial_{t}g^{tr})\mathrm{tr}v\rrangle,\\
        \llangle\tilde{\mathcal{C}}_{\mathcal{B}}(\Theta),\mathrm{tr}v\rrangle:=&\llangle(\mathrm{tr}\Theta)^{'},(\gamma^{-1})^{'}\mathrm{tr}v\rrangle,\\
        \llangle\tilde{\tilde{\mathcal{C}}}_{\mathcal{B}}(\Theta),\mathrm{tr}v\rrangle:=&\llangle(\mathrm{tr}\Theta)^{'},(\gamma^{-1})^{''}\mathrm{tr}v\rrangle.
    \end{aligned}
\end{equation}

Using an induction argument, we denote the higher order source terms for $k\ge1$ by
\begin{equation}\label{higher order source term}
   \begin{aligned}
      F_{k}&=\partial_{t}^{k-1}F_{1}-\sum_{\ell =0}^{k-1}\partial_{t}^{\ell}\mathcal{C}(\Theta_{k-\ell-1})-\sum_{\ell}^{k-1}(k-\ell)\partial_{t}^{\ell}\tilde{\tilde{\mathcal{C}}}(\Theta),\\
      \mathcal{F}_{k}^{a}&=-\sum_{k=0}^{k-1}\partial_{t}^{\ell}\mathcal{C}^{a}(\Theta_{k-\ell-1}),\\
      f_{k}&=\partial_{t}^{k-1}f_{1}-\sum_{\ell=0}^{k-1}\partial_{t}^{\ell}\mathcal{C}_{B}(\Theta_{k-\ell-1})-\sum_{\ell=0}^{k-2}(k-\ell-1)\partial_{t}^{\ell}\tilde{\tilde{\mathcal{C}}}_{\mathcal{B}}(\Theta_{k-\ell-1}).
   \end{aligned}
\end{equation}

Then we see that $\Theta_{k}$ satisfies the weak equation 
\begin{equation}\label{k weak equ for V}
  (\Phi(\Theta_{k})^{''},v)+\mathcal{L}(\Theta_{k},v)+k\left\langle\tilde{\mathcal{C}}(\Theta_{k}),v\right\rangle+k\llangle\tilde{\mathcal{C}}_{\mathcal{B}}(\Theta_{k}),v\rrangle=\left\langle F_{k}, v\right\rangle+\left\langle\mathcal{F}_{k}^{a},\partial_{a}v\right\rangle+\llangle f_{k},\mathrm{tr}v\rrangle, \forall v\in H^{1}(\Omega_{0}).
\end{equation}

Finally, we turn to the main linear theorem for the coupled system satisfied by $\Theta^{I}$. Given that the higher-order terms in the weak equation for $\Theta_{k}$, are of the same form, we can easily derive the following conclusions, as stated in Proposition 3.2 of \cite{Miao2}. We will impose the following regularity assumptions on the coefficients
\begin{equation}\label{regular assumptions}
   \begin{aligned}
       \partial^{a}\partial_{t}^{k}g\in L^{\infty}([0,T];L^{2}(\Omega_{0})),\qquad&k\le K+1, 
       \begin{cases}
           2a\le K+1-k, \quad& k\ge 1\\
           2a\le K,\quad &k=0
       \end{cases},\\
       \partial_{t}^{k}(\mathrm{tr}g)\in L^{2}([0,T];L^{2}(\partial\Omega_{0})),\qquad&k\le K,\\
       \partial_{t}^{k}\gamma^{-1},\partial_{t}^{k}\gamma\in L^{2}([0,T];L^{2}(\partial\Omega_{0})),\qquad&k\le K,\\
       \partial_{t}^{k}\gamma^{-1},\partial_{t}^{k}\gamma\in L^{\infty}([0,T];L^{\infty}(\partial\Omega_{0})),\qquad&k\le K-5,\\
       \partial_{t}^{k-1}f_{1}\in L^{2}([0,T];L^{2}(\partial\Omega_{0})),\qquad&k\le K,\\
       \partial_{t}^{k-1}F_{\Vert V\Vert^{2},1}\in L^{2}([0,T];L^{2}(\Omega_{0})),\qquad&k\le K.
   \end{aligned}
\end{equation}
Here $g$ is related to the frame $\left\{e_{I}\right\}$	by the relation $g^{\alpha\beta}=\sum_{I}\epsilon_{I}e_{I}^{\alpha}e_{I}^{\beta}$.

\begin{proposition}\label{local existence for V}
	Suppose $(\ref{regular assumptions})$ holds and that there are
	\begin{equation*}
	    \theta_{k}\in H^{1}(\Omega_{0}), \theta_{k+1}\in L^{2}(\Omega_{0}), \tilde{\theta}_{k+1}\in L^{2}(\partial\Omega_{0}), k=0,\dots,K
	\end{equation*}
	such that 
	\begin{enumerate}
		\item For $k=0,\dots,K-1$,
		\begin{equation*}
		\left\langle-g^{tt}\theta_{k+2},v\right\rangle+\llangle\tilde{\theta}_{k+2},\gamma^{-1}\mathrm{tr}v\rrangle+\mathcal{L}(\theta_{k},v)+k\left\langle\tilde{\mathcal{C}}(\theta_{k}),v\right\rangle+k\llangle \tilde{\mathcal{C}}_{\mathcal{B}}(\theta_{k}),v\rrangle=\llangle f_{k}(0),\mathrm{tr}v\rrangle+\left\langle F_{k}(0),v\right\rangle+\left\langle\mathcal{F}_{k}^{a}(0),\partial_{a}v\right\rangle.
		\end{equation*}
		\item $\tilde{\theta_{k}}=\mathrm{tr}\theta_{k}$ for $k=1,\dots,K$.
	\end{enumerate}
Then there is a unique $\Theta_{k}$ satisfying $(\ref{initial assumption1})$ and $(\ref{intial statemant for V})$, such that for all $v\in H^{1}(\Omega_{0})$ equation $(\ref{k weak equ for V})$ holds for almost every $t\in[0,T]$. The solution $\Theta_{k}$ satisfies 
\begin{equation}\label{V energy estimate}
  \begin{aligned}
      &\sup_{t\in[0,T]}\left(\Vert \Theta_{k}^{'}\Vert_{L^{2}(\Omega_{0})}+\Vert \Theta_{k}\Vert_{H^{1}(\Omega_{0})}+\Vert \mathrm{tr}\Theta_{k}^{'}\Vert_{L^{2}(\partial\Omega_{0})}\right)\\
      &\le C_{1}e^{C_{2}T}\left(\Vert \theta_{k}\Vert_{H^{1}(\Omega_{0})}+\Vert \theta_{k+1}\Vert_{L^{2}(\Omega_{0})}+\Vert \tilde{\theta}_{k+1}\Vert_{L^{2}(\partial\Omega_{0})}+\Vert f_{k}\Vert_{L^{2}([0,T];L^{2}(\partial\Omega_{0}))}\right.\\
      &\left. +\Vert F_{k}\Vert_{L^{2}([0,T];L^{2}(\Omega_{0}))}+\Vert \mathcal{F}_{k}^{a}\Vert_{L^{\infty}([0,T];L^{2}(\Omega_{0}))}\right).
  \end{aligned}
\end{equation}
Here $C_{1}$, $C_{2}$, and $C_{3}$ depend on the norms of $g$, $\mathrm{tr}g$, $\gamma$, $\gamma^{-1}$ appearing in $(\ref{regular assumptions})$. Moreover, we have $\Theta_{k-1}^{'}=\Theta_{k}$ for $k=1,\dots, K$, and there are functions $P_{k}$ depending polynomially on their arguments such that for $k\le K$ and $2a+k\le K+2$ and for $\tau\le T$
\begin{equation}\label{space partial estimate fotr V}
    \begin{aligned}
        \Vert \partial^{a}\Theta_{k}(\tau)\Vert_{L^{\infty}([0,\tau]; L^{2}(\Omega_{0}))}&\le P_{k}\left(\sup_{t\le \tau}\sum_{\ell\le 2a+k-2}\left(\Vert \nabla\Theta_{\ell}(t))\Vert_{L^{2}(\Omega_{0})}+\Vert \Theta_{\ell+1(t)}\Vert_{L^{2}(\Omega_{0})}+\Vert \Theta_{\ell+1}(t)\Vert_{L^{2}(\partial\Omega_{0})}\right)\right.,\\
        &\left.\Vert g\Vert_{L^{\infty}([0,\tau];H^{\max\left\{a-2,5\right\}}(\Omega_{0}))},\sum_{\ell\le k}\Vert \partial_{t}^{\ell}f\Vert_{L^{\infty}([0,T];H^{a-\frac{3}{2}}(\Omega_{0}))}\right).
    \end{aligned}
\end{equation}
\end{proposition}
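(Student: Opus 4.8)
\textbf{Proof strategy for Proposition \ref{local existence for V}.}
The plan is to reduce the coupled interior–boundary weak system \eqref{k weak equ for V}, together with the initial conditions \eqref{initial assumption1}–\eqref{intial statemant for V}, to the abstract framework already established in Proposition 3.2 of \cite{Miao2}, and then to upgrade the resulting low-regularity solution to the claimed spatial regularity by commuting spatial derivatives and invoking the elliptic estimates of Section \ref{sec2.3}. First I would verify that the bilinear data of the problem satisfy the structural hypotheses of that abstract theorem: the operator $\Phi$ defined via $(\Phi(u),\phi)=\langle -g^{tt}u,\phi\rangle+\llangle\gamma^{-1}\mathrm{tr}\,u,\mathrm{tr}\,\phi\rrangle$ is, under \eqref{regular assumptions} and the positivity $-g^{tt}=-g^{00}>0$ and $\gamma^{-1}>0$ coming from \eqref{compatibility conditions}, a bounded, symmetric, coercive map $H^1(\Omega_0)\to (H^1(\Omega_0))^\ast$ on a suitable (time-dependent but uniformly equivalent) inner product; the form $B$ is bounded on $H^1\times H^1$ and satisfies a Gårding inequality modulo a term absorbed into $\Phi$; and $C,D,E$, together with the lower-order pieces $\mathcal C,\mathcal C^a,\tilde{\mathcal C},\tilde{\tilde{\mathcal C}},\mathcal C_{\mathcal B},\tilde{\mathcal C}_{\mathcal B},\tilde{\tilde{\mathcal C}}_{\mathcal B}$ collected in \eqref{lower terms note}, are controlled in the norms appearing in \eqref{regular assumptions} so that they enter only as perturbations of lower order in the energy identity. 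This is essentially bookkeeping: one checks that every coefficient appearing in $B,C,D,E,\mathcal L$ and in \eqref{higher order source term} is, by \eqref{regular assumptions}, in $L^\infty_tL^\infty_x$ or $L^2_tL^2_x$ with the regularity demanded by the abstract theorem.

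Next I would run the Galerkin approximation for the truncated scalar weak equation at level $k$. Fix an orthonormal basis $\{w_j\}$ of $H^1(\Omega_0)$ (e.g.\ eigenfunctions of a fixed elliptic operator with the relevant boundary condition), seek $\Theta_k^{(N)}=\sum_{j\le N}d_j^{(N)}(t)w_j$ solving the projected ODE system obtained from \eqref{k weak equ for V}, and solve the resulting linear ODE for $d^{(N)}$ using the compatibility conditions (1)–(2) to fix $d^{(N)}(0)$ and $\dot d^{(N)}(0)$; note the second-order-in-time structure with the invertible (by coercivity of $\Phi$) leading operator. Then derive the basic energy estimate by testing with $v=(\Theta_k^{(N)})'$: the $\Phi$-coercivity gives control of $\|(\Theta_k^{(N)})'\|_{L^2(\Omega_0)}^2+\|\mathrm{tr}(\Theta_k^{(N)})'\|_{L^2(\partial\Omega_0)}^2$, the leading part of $B$ gives $\|\nabla\Theta_k^{(N)}\|_{L^2(\Omega_0)}^2$, and all remaining terms — including the source terms $\langle F_k,v\rangle$, $\langle\mathcal F_k^a,\partial_a v\rangle$, $\llangle f_k,\mathrm{tr}\,v\rrangle$ — are estimated by Cauchy–Schwarz, the trace theorem, and the time-derivative bounds on the coefficients, so that Grönwall yields \eqref{V energy estimate} uniformly in $N$. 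Weak-$\ast$ compactness then produces a limit $\Theta_k$ satisfying \eqref{k weak equ for V} for a.e.\ $t$, and a standard argument (testing with time-dependent $v$, integrating by parts in $t$) recovers the initial conditions \eqref{intial statemant for V} and the regularity \eqref{initial assumption1}, including $\Phi(\Theta)'',\Phi(\Theta)'\in L^2_t(H^1)^\ast$. Uniqueness follows from the energy estimate applied to the difference of two solutions with vanishing data and sources. The identification $\Theta_{k-1}'=\Theta_k$ is obtained by checking that $\Theta_{k-1}'$ satisfies the same weak equation and initial data as $\Theta_k$, using the compatibility relations (1)–(2), and then invoking uniqueness.

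Finally, for the spatial-regularity bound \eqref{space partial estimate fotr V} I would argue by induction on $a$, exactly as in Proposition 3.2 of \cite{Miao2}. Having the bounds on $\Theta_\ell,\Theta_{\ell+1},\mathrm{tr}\,\Theta_{\ell+1}$ for $\ell\le 2a+k-2$, rewrite the interior equation $\square\Theta_k=F_k$ (or its weak form) using Lemma \ref{decomposition u} to split $\bar A\Theta_k$ off from the time-derivative terms, apply the elliptic estimate Lemma \ref{elliptic estimate} with the Dirichlet/Neumann data controlled via the trace theorem and the boundary equation at level $k$, and feed in the structure of $F_k,\mathcal F_k^a$ from \eqref{higher order source term}: the top-order pieces ($\partial^{a}\partial_t^{k}$-type terms) are absorbed using the inductive hypothesis and the coefficient norms in \eqref{regular assumptions}, while curvature terms enter only at lower order. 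Iterating in $a$ up to $2a+k\le K+2$ gives the polynomial bound $P_k$ in the stated arguments. The main obstacle, as in the a priori estimates of Section \ref{sec2}, is the careful treatment of the vorticity contribution inside $F_k$: the term $\partial_t^{k-1}(\epsilon_I\epsilon_K\nabla_K\omega_{KI})$ naively produces $\nabla^{(2)}\partial_t^{k-1}\Theta$, one derivative too many, and one must use the transport equation \eqref{transport w} for $\omega$ together with \eqref{alongu} — as recorded in the reductions \eqref{main skill for higher order DV2} and in Lemmas \ref{V higher order lemma}, \ref{V boundary higher order lemma} — to rewrite this as $\Theta^J\square\Theta^L\omega_{LJ}$ plus genuinely lower-order terms, so that the dangerous factor is $\square\Theta$ rather than $\nabla^{(2)}\Theta$ and hence controlled by the right-hand side of the equation itself; this algebraic rearrangement must be threaded consistently through the Galerkin scheme, the weak formulation \eqref{k weak equ for V}, and the elliptic-regularity induction.
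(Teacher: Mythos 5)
Your proposal is correct and follows essentially the same route as the paper, which gives no independent proof here but simply invokes Proposition 3.2 of \cite{Miao2} (itself a Galerkin-plus-energy-estimate argument of exactly the type you outline, with the spatial regularity recovered by the elliptic decomposition of Lemma \ref{decomposition u} and Lemma \ref{elliptic estimate}). Your closing discussion of the vorticity rearrangement is really about verifying the hypotheses on the prescribed sources $F_k, f_k, \mathcal{F}_k^a$ during the iteration rather than about the linear theorem itself, but this does not affect the correctness of the argument.
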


We also have the following lemma, which is an analogue of Lemma $\ref{Theta boundary control}$ and proved in Lemma 3.12 of \cite{Miao2}. 

\begin{lemma}\label{boundaryV estimates}
	Under the assumptions of Proposition $\ref{local existence for V}$, and with the same notation, for any $k\le K-1$
	\begin{equation*}
	    \begin{aligned}
	        \Vert \nabla\Theta_{k}\Vert_{L^{2}([0,T];L^{2}(\partial\Omega_{0}))}^{2}&\lesssim \sum_{j\le k}\left(\Vert F_{j}\Vert_{L^{2}([0,T];L^{2}(\Omega_{0}))}+\Vert\mathcal{F}_{j}\Vert_{L^{2}([0,T];L^{2}(\Omega_{0}))}^{2}+\Vert f_{j}\Vert_{L^{2}([0,T];L^{2}(\partial\Omega_{0}))}^{2}\right)+\Vert \theta\Vert_{H^{1}(\Omega_{0})}^{2}\\
	        &+\Vert\theta_{k+1}\Vert_{L^{2}(\Omega_{0})}^{2}+\Vert \tilde{\theta}_{k+1}\Vert_{L^{2}(\partial\Omega_{0})}^{2}+\Vert (\mathrm{tr}\Theta_{k})^{'}\Vert_{L^{2}([0,T];L^{2}(\partial\Omega_{0}))}^{2}+\Vert (\mathrm{tr}\Theta_{k})^{''}\Vert_{L^{2}([0,T];L^{2}(\partial\Omega_{0}))}^{2},
	    \end{aligned}
	\end{equation*}
	where the implicit constant depends only on $g$, $\gamma$, and their first three derivatives.
\end{lemma}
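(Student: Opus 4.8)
The statement to be proven is a boundary trace estimate for $\nabla\Theta_k$ in $L^2([0,T];L^2(\partial\Omega_0))$, and it is the analogue of Lemma~\ref{Theta boundary control} within the weak/Galerkin framework of Proposition~\ref{local existence for V}. The plan is to follow the energy-identity strategy of Lemma~\ref{Theta boundary control}, but executed at the level of the weak formulation \eqref{k weak equ for V}. First I would choose the multiplier $Q = \partial_t + \alpha n$ with $\alpha>0$ a fixed small constant, exactly as in Lemma~\ref{Theta boundary control}, so that testing the weak equation for $\Theta_k$ against $Q\Theta_k$ (after a suitable cutoff/extension of $n$ to the interior to make $Q$ an admissible $H^1$-valued test vectorfield) produces the spacetime energy flux identity \eqref{energy identity}. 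The point of the $\alpha n$ piece is that it contributes a boundary term $\tfrac{\alpha}{2}\int_{\partial\Omega_0^T} g^{\alpha\beta}\partial_\alpha\partial_t^k\Theta\,\partial_\beta\partial_t^k\Theta\,\mathrm{d}S\mathrm{d}t$, which — because the induced metric $g^{ij}$ on $\Sigma_t$ restricted to the boundary controls the full tangential gradient, and the normal derivative is then recovered from the interior equation — dominates $\|\nabla\Theta_k\|_{L^2([0,T];L^2(\partial\Omega_0))}^2$ up to lower-order boundary and interior terms.

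Second, I would carefully track the boundary terms generated on $\partial\Omega_0^T$ by the $Q\Theta_k$ pairing. The $\partial_t$ part of $Q$ gives the flux term $\int_{\partial\Omega_0}(\tfrac12 Q^0 g^{\alpha\beta}\partial_\alpha\Theta_k\partial_\beta\Theta_k - Q\Theta_k\,g^{0\nu}\partial_\nu\Theta_k)$ evaluated at $t=0,T$, which is controlled by the energy norms appearing in \eqref{V energy estimate} and hence by the right-hand side of the claimed inequality. The genuinely boundary-supported pieces involve $Q\partial_t^k\Theta\, D_n\partial_t^k\Theta$ and $(\mathrm{tr}\Theta_k)'$, $(\mathrm{tr}\Theta_k)''$; these last two appear because the boundary operator in \eqref{form V equation} is $\partial_t^2 + \gamma D_n$, so after integration by parts in $t$ one is left with second time-derivatives of the trace, exactly the terms $\|(\mathrm{tr}\Theta_k)'\|_{L^2L^2(\partial\Omega_0)}^2$ and $\|(\mathrm{tr}\Theta_k)''\|_{L^2L^2(\partial\Omega_0)}^2$ that figure in the statement. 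The boundary source $f_j$ (with all $j\le k$, since the multiplier $Q$ mixes orders through $\partial_t$ applied repeatedly via the $\mathcal C_{\mathcal B}$, $\tilde{\mathcal C}_{\mathcal B}$ terms in \eqref{higher order source term}) enters linearly and is absorbed by Cauchy--Schwarz into $\sum_{j\le k}\|f_j\|_{L^2L^2(\partial\Omega_0)}^2$ plus a small multiple of the boundary energy.

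Third, for the interior terms: the bulk integrals $\int_0^T\int_{\Omega_t} F_j\, Q\Theta_j$ and $\int_0^T\int_{\Omega_t}\mathcal F_j^a\,\partial_a Q\Theta_j$ are handled by Cauchy--Schwarz, giving $\sum_{j\le k}\big(\|F_j\|_{L^2L^2(\Omega_0)}^2 + \|\mathcal F_j\|_{L^2L^2(\Omega_0)}^2\big)$ plus a controlled multiple of $\sup_t(\|\Theta_j\|_{H^1(\Omega_0)}^2 + \|\Theta_j'\|_{L^2(\Omega_0)}^2)$, which by \eqref{V energy estimate} is dominated by $\|\theta\|_{H^1(\Omega_0)}^2 + \|\theta_{k+1}\|_{L^2(\Omega_0)}^2 + \|\tilde\theta_{k+1}\|_{L^2(\partial\Omega_0)}^2$ and the $L^2$-norms of the sources — precisely the remaining terms on the right-hand side. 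The commutator terms from $[\nabla_\mu, Q^\mu]$ and $\partial_\mu(\sqrt{|g|}g^{\alpha\beta}Q^\mu)$ are lower order in derivative count and are absorbed into the same quantities using the coefficient regularity \eqref{regular assumptions} and its ``first three derivatives'' — consistent with the stated dependence of the implicit constant on $g$, $\gamma$ and three derivatives. Finally, one fixes $\alpha$ and the Cauchy--Schwarz splitting constants so that all terms proportional to the boundary gradient norm on the right are absorbed into the left, yielding the estimate.

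\textbf{Main obstacle.} The delicate point is the same one that makes Lemma~\ref{Theta boundary control} nontrivial: one must verify that the multiplier $Q=\partial_t+\alpha n$ is \emph{admissible} in the weak formulation \eqref{k weak equ for V} — i.e., that $Q\Theta_k$ (or an appropriate regularization thereof, obtained from the Galerkin basis) lies in $H^1(\Omega_0)$ with the requisite time-regularity so that the pairing $(\Phi(\Theta_k)'',Q\Theta_k)$ makes sense and integration by parts in $t$ is justified — and, simultaneously, that the sign of the coefficient $\gamma = a/(2\|V\|^2)>0$ (the relativistic Taylor sign condition in \eqref{compatibility conditions}) makes the boundary flux term have the correct sign to be kept on the left rather than needing to be controlled. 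Making the $\alpha$-dependent boundary term genuinely coercive for the \emph{full} boundary gradient (not just the tangential part) requires invoking the interior equation $\Box\partial_t^k\Theta = H$ on $\partial\Omega_0$ to trade $D_n^2\partial_t^k\Theta$ for tangential derivatives and $H$, and this is where the extra source terms $F_j, \mathcal F_j$ with $j\le k$ and the factor structure of \eqref{higher order source term} must be bookkept precisely; I expect this bookkeeping, rather than any single estimate, to be the bulk of the work.
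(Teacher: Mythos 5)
Your overall strategy is the intended one: the lemma is the weak-formulation analogue of Lemma~\ref{Theta boundary control} (the paper itself only cites Lemma~3.12 of \cite{Miao2}), and the proof is exactly the multiplier identity with $Q=\partial_t+\alpha n$ combined with the energy bound \eqref{V energy estimate} and Cauchy--Schwarz on the sources. Two points in your mechanism are misstated, though, and you should be aware that the proof only closes once they are corrected. First, the boundary flux $\frac{\alpha}{2}\int_{\partial\Omega_0^T} g^{\alpha\beta}\partial_\alpha\Theta_k\,\partial_\beta\Theta_k$ does \emph{not} dominate the full boundary gradient: on the timelike hypersurface $\partial\Omega_0^T$ the quadratic form decomposes as $(D_n\Theta_k)^2$ plus a Lorentzian form in the tangential directions, so it is coercive only for the spatial tangential derivatives, while the $-(\partial_t\Theta_k)^2$ contribution has the wrong sign and must be moved to the right --- this is precisely why $\Vert(\mathrm{tr}\Theta_k)'\Vert^2_{L^2L^2(\partial\Omega_0)}$ sits on the right-hand side of the statement. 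Second, the normal derivative is not recovered from the interior wave equation restricted to the boundary (there is no $D_n^2\Theta_k$ to trade away in this argument); it comes from the boundary evolution equation itself, $D_n\Theta_k=\gamma^{-1}\bigl(f_k-(\mathrm{tr}\Theta_k)''\bigr)$, using the Taylor-sign positivity $\gamma>0$ --- exactly as the paper does for $D_n\partial_t^{k-1}\Theta$ in the proof of Lemma~\ref{V boundary estimate} --- and this is the actual source of the $\Vert(\mathrm{tr}\Theta_k)''\Vert^2$ term. Since your right-hand side already lists all of the resulting terms, these are repairable imprecisions rather than fatal gaps, but as written the first paragraph of your plan would not yield coercivity for $\Vert\nabla\Theta_k\Vert_{L^2(\partial\Omega_0)}$ without these corrections.
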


\subsection{The linear existence theory of the equation for $\partial_{t}\Vert V\Vert^{2}$. } Recall from $(\ref{2DV^2 frame equ})$ that 
\begin{equation*}
   \begin{cases}
        \square \partial_{t}\Vert V\Vert^{2}=\left(\frac{1}{\eta^{2}}-1\right)\partial_{t}^{3}\Vert V\Vert^{2}+F\quad &\text{in}\quad \Omega,\\
        \partial_{t}\Vert V\Vert^{2}=0 \quad &\text{on}\quad \partial\Omega,
   \end{cases}   
\end{equation*} 
where $F$ consists of $D^{2}\Vert V\Vert^{2}$, $\partial_{t}^{2}\Vert V\Vert^{2}$, $\partial_{t}\Vert V\Vert^{2}$, $D\Vert V\Vert^{2}$, $D\partial_{t}\Vert V \Vert^{2}$, $\Vert V\Vert^{2}$, $\partial_{t}\Theta$, $D\Theta$, $\epsilon_{K}\Theta^{J}\nabla_{V}\nabla_{K}\omega_{KJ}$, $D\omega$, $\Gamma$ and $R$. Let $\Lambda$ be the linearized unknown for the equation $D_{V}\Vert V\Vert^{2}$. Employing calculations analogous to those utilized for $\Theta$, we can derive the weak equation for $\Lambda$. We define the following bilinear forms
\begin{equation*}
     \begin{aligned}
        &\tilde{B}: H_{0}^{1}(\partial\Omega_{0})\times H_{0}^{1}(\Omega_{0})\to \mathbb{R}, \qquad \tilde{C}: L^{2}(\Omega_{0})\times H_{0}^{1}(\Omega_{0})\to \mathbb{R},\\
        &\tilde{B}(u,v):=\left\langle g^{ab}\partial_{a}u,\partial_{b}v\right\rangle-\frac{1}{2}\left\langle \partial_{a}u, vg^{a\alpha}\partial_{\alpha}\log\abs{g}\right\rangle-\left\langle\partial_{a}u, v\partial_{t}g^{ta}\right\rangle,\\
        &\tilde{C}(u,v):=2\left\langle u, g^{ta}\partial_{a}v\right\rangle-\frac{1}{2}\left\langle u, vg^{t\alpha}\partial_{\alpha}\log\abs{g}\right\rangle+\left\langle u, v\partial_{a}g^{ta}\right\rangle-\left\langle u ,v\partial_{t}g^{tt}\right\rangle.
     \end{aligned}
\end{equation*}
The standard embedding of $H_{0}^{1}(\Omega_{0})$ in $H^{-1}(\Omega_{0}):=(H_{0}^{1}(\Omega))^{\ast}$ is given by $\iota$
\begin{equation*}
       (\iota(u),v):=\left\langle u,v\right\rangle, \quad u\in H_{0}^{1}(\Omega_{0}),\quad v\in H_{0}^{1}(\Omega_{0}).
\end{equation*}
We will now simply write $u$ for $\iota(u)$. For any $\Lambda$ with 
\begin{equation}\label{initial condition for DtV2}
    \Lambda\in L^{2}([0,T]; H_{0}^{1}(\Omega_{0})), \quad \Lambda^{'}\in L^{2}([0,T];L^{2}(\Omega_{0})), \quad \Lambda^{''}\in L^{2}([0,T];H^{-1}(\Omega_{0})),
\end{equation}
and $v\in H_{0}^{1}(\Omega_{0})$ let 
\begin{equation*}
     \mathcal{L}_{\Vert V\Vert}(\Lambda,v):=\tilde{B}(\Lambda,v)+\tilde{C}(\Lambda^{'},v).
\end{equation*}
Then the linearized weak equation for $\Lambda$ is 
\begin{equation}\label{weak equ for Lambda}
     (-g^{tt}\Lambda^{''},v)+\mathcal{L}_{\Vert V\Vert}(\Lambda,v)=\left\langle-(\frac{1}{\eta^{2}}-1)\Lambda_{2},v\right\rangle+\left\langle-F,v\right\rangle, \quad\forall v\in H_{0}^{1}(\Omega_{0}),
\end{equation}
where $F\in L^{2}([0,T];L^{2}(\Omega_{0}))$ is a given function. The initial conditions are 
\begin{equation}\label{initial data for Lambda}
   \Lambda(0)=\lambda_{0}, \quad (\lambda^{'}(0),v)=\left\langle \lambda_{1},v\right\rangle, \quad \forall v\in H_{0}^{1}(\Omega_{0}).
\end{equation}
Here $\lambda_{0}\in H_{0}^{1}(\Omega_{0})$ and $\lambda_{1}\in L^{2}(\Omega_{0})$ are given initial data. We also need to consider the higher order equations for $\Lambda_{k}:=\partial_{t}^{k}\Lambda$. Let  
\begin{equation}\label{higher order source term for Lambda}
   \begin{aligned}
      &F_{\Vert V\Vert,0}:=-F_{\Vert V\Vert}, \quad \mathcal{F}_{\Vert V\Vert,0}^{a}:=0,\\
      &F_{\Vert V\Vert,k}=\partial_{t}^{k-1}F_{\Vert V\Vert^{2}, 1}-\sum_{\ell=0}^{k-1}\partial_{\ell}\mathcal{C}(\Lambda_{k-\ell-1})-\sum_{\ell=0}^{k-1}(k-\ell)\partial_{\ell}\tilde{\tilde{\mathcal{C}}}(\Lambda_{k-\ell-1}),\\
      &\mathcal{F}_{\Vert V\Vert,k}^{a}=-\sum_{\ell=0}^{k-1}\partial_{t}^{\ell}\mathcal{C}^{a}(\Lambda_{k-\ell-1}).
   \end{aligned}
\end{equation}
Then the higher order equations for $\Lambda_{k}$ are 
\begin{equation}\label{higher order weak equ for Lambda}
    (-g^{tt}\Lambda_{k}^{''},v)+\mathcal{L}_{\Vert V\Vert}(\Lambda_{k},v)+k\left\langle\tilde{\mathcal{C}}(\Lambda_{k},v)\right\rangle=\left\langle -(\frac{1}{\eta^{2}}-1)\Lambda_{k+2},v\right\rangle+\left\langle F_{\Vert V\Vert,k},v\right\rangle+\left\langle \mathcal{F}_{\Vert V\Vert,k}^{a},\partial_{a}v\right\rangle, \quad \forall v\in H_{0}^{1}(\Omega_{0}).
\end{equation}
Note that $F_{\Vert V\Vert,1}$ is a linear combination of $\Gamma$, $\partial_{t}^{3}\Vert V\Vert^{2}$, $\partial_{t}\Vert V\Vert^{2}$, $\partial_{t}\Vert V\Vert^{2}$, $D\partial_{t}^{2}\Vert V\Vert^{2}$, $D\partial_{t}\Vert V\Vert^{2}$, $D^{2}\partial_{t}\Vert V\Vert^{2}$, $\partial_{t}^{2}\Theta$, $D\partial_{t}\Theta$, $D^{2}\Theta$, $\partial_{t}R$ and $DR$ using $(\ref{transport w})$, $(\ref{2V interior frame equ})$, $(\ref{2DV^2 frame equ})$ and $(\ref{main skill for higher order DV2})$. We can also obtain the following result, which is the analogue of Proposition \ref{local existence for V} for $\Lambda$, as proved in Lemma 3.10 and Proposition 3.11 in \cite{Miao2}.
\begin{proposition}\label{local existence for Lambda}
	Suppose $(\ref{regular assumptions})$ holds and that there exist 
	\begin{equation*}
	   \lambda_{k}\in H_{0}^{1}(\Omega_{0}), \quad \lambda_{k+1}\in L^{2}(\Omega_{0}), \quad k=0,\dots,K
	\end{equation*}
	such that 
	\begin{equation*}
	    (-g^{tt}\lambda_{k}^{''},v)+\mathcal{L}_{\Vert V\Vert}(\lambda_{k},v)+k\left\langle\tilde{C}(\lambda_{k},v)\right\rangle=\left\langle -(\frac{1}{\eta^{2}}-1)\lambda_{k+2},v\right\rangle+\left\langle F_{\Vert V\Vert,k}(0),v\right\rangle+\left\langle \mathcal{F}_{\Vert V\Vert,k}^{a}(0),\partial_{a}v\right\rangle.
	\end{equation*}
	Then there is a unique $\Lambda_{k}$ satisfying $(\ref{initial condition for DtV2})$ and $(\ref{initial data for Lambda})$, such that for all $v\in H_{0}^{1}(\Omega_{0})$ equation $(\ref{higher order weak equ for Lambda})$ holds for almost every $t\in[0,T]$. The solution satisfies 
	\begin{equation}\label{Lambda energy estimate}
	   \begin{aligned}
	      &\sup_{t\in[0,T]}\left(\Vert\Lambda_{k}^{'}\Vert_{L^{2}(\Omega_{0})}+\Vert \Lambda_{k}\Vert_{H^{1}(\Omega_{0})}\right)+\Vert \nabla\Lambda_{k}\Vert_{L^{2}([0,T];L^{2}(\partial\Omega_{0}))}^{2}\\
	      &\le C_{1}e^{C_{2}T}\left(\Vert \lambda_{k}\Vert_{H^{1}(\Omega_{0})}+\Vert \lambda_{k+1}\Vert_{L^{2}(\Omega_{0})}+\Vert F_{\Vert V\Vert,k}\Vert_{L^{2}([0,T];L^{2}(\Omega_{0}))}+\Vert \mathcal{F}_{\Vert V\Vert,k}\Vert_{L^{2}([0,T];L^{2}(\Omega_{0}))}\right).
	   \end{aligned}	    
	\end{equation}
	Here the constants $C_{1}$, $C_{2}$ and $C_{3}$ depend on norms of $g$ appearing in $(\ref{regular assumptions})$. Moreover, $\Lambda_{k-1}^{'}=\Lambda_{k}$ for $k=1, \dots, K$, and there are functions $P_{k}$ depending polynomically on their arguments such that for $k\le K$ and $2a+k\le K+2$, and for $\tau\le T$, 
	\begin{equation}\label{space partial estimate for Lambda}
	   \begin{aligned}
	       \Vert \partial^{a}\Lambda_{k}(\tau)\Vert_{L^{\infty}([0,\tau];L^{2}(\Omega_{0}))}&\le P_{k}\left(\sup_{t\le \tau}\sum_{\ell\le 2a+k-2}\left(\Vert \nabla \Lambda_{\ell}(t)\Vert_{L^{2}(\Omega_{0})}+\Vert \Lambda_{\ell+1}(t)\Vert_{L^{2}(\Omega_{0})}\right),\right.\\
	       &\left. \Vert g\Vert_{L^{\infty}([0,\tau];H^{\max\left\{a-2,5\right\}}(\Omega_{0}))},\sum_{\ell\le k}\Vert \partial_{t}^{\ell}F_{\Vert V\Vert}\Vert_{L^{2}([0,T];H^{a}(\Omega_{0}))}\right).
	    \end{aligned}
	\end{equation}
\end{proposition}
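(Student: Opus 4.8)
\textbf{Proof proposal for Proposition~\ref{local existence for Lambda}.}
The plan is to build the solution by the same Galerkin scheme that underlies Proposition~\ref{local existence for V}, working with the symmetric coercive structure of $\mathcal{L}_{\Vert V\Vert}$ together with the sign of the coefficient $\frac{1}{\eta^{2}}-1\ge 0$, and then to bootstrap in the number of time derivatives. First I would treat the base case $k=K$ (the top-order equation), where the term $\left\langle-(\frac{1}{\eta^{2}}-1)\Lambda_{k+2},v\right\rangle$ is absent (it is moved to the source as a lower-order datum, or the hierarchy is closed by regarding $\Lambda_{K+2}$ as prescribed from the regularity conditions~(\ref{regularity conditions})), so that~(\ref{higher order weak equ for Lambda}) is a genuine linear wave equation with Dirichlet data. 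One picks an orthonormal basis $\{w_j\}$ of $H^1_0(\Omega_0)$, seeks $\Lambda_k^{(N)}=\sum_{j\le N}d_j(t)w_j$ solving the finite-dimensional ODE system obtained by testing against $w_1,\dots,w_N$, and solves it with the initial data $\lambda_k,\lambda_{k+1}$ projected onto the span; the compatibility condition in the hypothesis guarantees the projected data are consistent. The ODE system has a symmetric positive-definite mass matrix coming from $\langle -g^{tt}\,\cdot\,,\cdot\rangle$ (recall $g^{tt}=-1$ at $t=0$ and stays negative for $T$ small), hence is uniquely solvable on $[0,T]$.

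Next I would derive the uniform energy estimate for $\Lambda_k^{(N)}$: test the Galerkin equation with $(\Lambda_k^{(N)})'$, integrate over $[0,t]$, and use that $\tilde B$ contributes $\frac12\frac{d}{dt}\langle g^{ab}\partial_a\Lambda_k^{(N)},\partial_b\Lambda_k^{(N)}\rangle$ up to terms with $\partial_t g$ absorbable by Gr\"onwall, while $\tilde C(\Lambda_k',\cdot)$ and $k\langle\tilde{\mathcal C}(\Lambda_k,\cdot)\rangle$ are lower order and controlled by the regularity assumptions~(\ref{regular assumptions}); crucially the term $\langle-(\frac{1}{\eta^{2}}-1)\Lambda_{k+2},(\Lambda_k^{(N)})'\rangle$, when present, is handled by the trick from Lemma~\ref{V^2 higher order lemma}: since $\frac{1}{\eta^{2}}-1=\frac{2G'\Vert V\Vert^2}{G}\ge 0$ one absorbs it into the wave operator to form $\overline{\square}$, producing a nonnegative correction to the energy rather than a bad term. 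This yields the bound~(\ref{Lambda energy estimate}) uniformly in $N$, and hence weak-$*$ limits $\Lambda_k$ in $L^\infty([0,T];H^1_0)$ with $\Lambda_k'\in L^\infty([0,T];L^2)$; passing to the limit in the weak formulation gives existence, and $\Lambda_k''\in L^2([0,T];H^{-1})$ follows from the equation itself. Uniqueness follows from the same energy estimate applied to the difference of two solutions with zero data. The identification $\Lambda_{k-1}'=\Lambda_k$ is obtained by differentiating the weak equation for $\Lambda_{k-1}$ in $t$ and invoking uniqueness, exactly as in Proposition~\ref{local existence for V}. The boundary trace bound $\Vert\nabla\Lambda_k\Vert_{L^2([0,T];L^2(\partial\Omega_0))}^2$ in~(\ref{Lambda energy estimate}) comes from a multiplier of the form $Q=\partial_t-\alpha n$ as in Lemma~\ref{V^2 energy lemma} applied to the (now known) solution.

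Finally, for the spatial-regularity estimate~(\ref{space partial estimate for Lambda}) I would use the elliptic decomposition of Lemma~\ref{decomposition u}: writing $\overline A\Lambda_k=\square\Lambda_k+(\text{$\partial_t$-derivative terms})+(\text{first-order terms})$ and using the equation to replace $\square\Lambda_k$ by $(\frac{1}{\eta^{2}}-1)\Lambda_{k+2}+F_{\Vert V\Vert,k}$, then invoking the standard elliptic estimate of Lemma~\ref{elliptic estimate} with the homogeneous Dirichlet condition $\Lambda_k|_{\partial\Omega_0}=0$ to trade two spatial derivatives for $\square\Lambda_k$ plus lower order, and iterating in $a$; the polynomial dependence $P_k$ on $\Vert g\Vert_{H^{\max\{a-2,5\}}}$ and $\sum_\ell\Vert\partial_t^\ell F_{\Vert V\Vert}\Vert_{L^2H^a}$ emerges from tracking the product estimates in $F_{\Vert V\Vert,k}$ (whose structure is given explicitly in~(\ref{higher order source term for Lambda}) and the displayed linear-combination description of $F_{\Vert V\Vert,1}$). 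The main obstacle, as in the $\Theta$ case, is the vorticity-type term $\epsilon_K\Theta^J\nabla_V\nabla_K\omega_{KJ}$ hidden in $F$: naively commuting $\partial_t$ onto $\nabla\nabla_V\omega$ produces $\nabla^{(2)}\partial_t^k\Theta$, one spatial derivative too many, so one must first use the identity~(\ref{main skill for higher order DV2}) to rewrite it as $\Theta^J\square\Theta^L\omega_{LJ}$ plus genuinely lower-order terms before differentiating; verifying that this rewriting is compatible with the weak formulation and that all resulting terms land in $L^2([0,T];L^2(\Omega_0))$ under~(\ref{regular assumptions}) is the delicate bookkeeping step. Everything else is a routine transcription of Lemma~3.10 and Proposition~3.11 of~\cite{Miao2}.
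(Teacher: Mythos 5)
Your proposal is correct and follows essentially the same route as the paper, which does not prove this proposition itself but defers to the Galerkin/energy argument of Lemma 3.10 and Proposition 3.11 in \cite{Miao2}; your outline (Galerkin basis in $H_{0}^{1}(\Omega_{0})$, energy estimate via testing with $(\Lambda_{k}^{(N)})'$, absorption of the $(\frac{1}{\eta^{2}}-1)$-term using its sign, weak-$\ast$ limits, uniqueness, the multiplier $Q=\partial_{t}-\alpha n$ for the boundary trace, and the elliptic decomposition for \eqref{space partial estimate for Lambda}) is exactly that argument. One small clarification: your opening suggestion to treat $k=K$ as a base case where $\Lambda_{K+2}$ is "prescribed" is not the right reading — since $\Lambda_{k+2}=\Lambda_{k}''$, the term $\left\langle -(\frac{1}{\eta^{2}}-1)\Lambda_{k+2},v\right\rangle$ is not external data but combines with $(-g^{tt}\Lambda_{k}'',v)$ to give the modified, still strictly positive, coefficient $-g^{tt}+\frac{1}{\eta^{2}}-1$ on the second time derivative for every $k$; your later remark about absorbing it into $\overline{\square}$ is the correct mechanism and supersedes the base-case framing. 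Also note that the delicate rewriting of the vorticity term via \eqref{main skill for higher order DV2} belongs to verifying the hypotheses on $F_{\Vert V\Vert,k}$ during the iteration rather than to this linear existence statement, where the source is simply assumed to lie in $L^{2}([0,T];L^{2}(\Omega_{0}))$ by \eqref{regular assumptions}.
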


Based on the a priori estimates in Proposition $\ref{main prop}$ and the existence results in Propositions $\ref{local existence for R}$, $\ref{local existence for V}$ and $\ref{local existence for Lambda}$, we proceed to prove Theorem \ref{main thm} as follows.

\begin{proof}[Proof of Themrem $\ref{main thm}$] The initial step is to set up an iteration for the system defined by $(\ref{transport e})$, $(\ref{transport Ch})$, $(\ref{connection of g})$, $(\ref{2V boundary equ})$, $(\ref{2V interior frame equ})$, $(\ref{2DV^2 frame equ})$, $(\ref{new hyperbolic system})$.Furthermore, we demonstrate the existence of a solution to the system $(\ref{geometric quantities equ})$, $(\ref{fluid quantities for V equ})$, $(\ref{fluid quantities for DV^2 equ})$. The uniqueness of the solution can be shown by similar estimates. Once the iteration has been completed, we shall proceed to demonstrate how the solution can be shown to satisfy the original equations $(\ref{frame main equ})$ and $(\ref{Ricci equ})$. 
	
The derived system for the iteration is described as follows. The geometric quantities will be defined using the equatios
\begin{equation}\label{geometric quantities equ}
   \begin{cases}
       e_{0}^{\mu}=\frac{1}{\hat{\Theta}^{0}}(\delta_{0}^{\mu}-\hat{\Theta}^{\tilde{I}}e_{\tilde{I}}^{\mu}),\\
       \partial_{t} e_{\tilde{I}}^{\mu}=-e_{J}^{\mu}D_{\tilde{I}}\hat{\Theta}^{J}-\hat{\Theta}^{J}\Gamma_{\tilde{I}J}^{K}e_{K}^{\mu},\\
       \Gamma_{0J}^{K}=-\frac{\hat{\Theta}^{\tilde{I}}}{\hat{\Theta}^{0}}\Gamma_{\tilde{I}J}^{K},\\
       \partial_{t}\Gamma_{\tilde{I}J}^{K}=\hat{\Theta}^{I}(R_{\ JI\tilde{I}}^{K}-\Gamma_{LJ}^{K}\Gamma_{\tilde{I}I}^{L})-\Gamma_{IJ}^{K}D_{\tilde{I}}\hat{\Theta}^{I}, \\
       \sum_{\nu=0}^{3}\check{\mathcal{B}}^{\mu}\partial_{\mu}\check{W}_{AB}=\check{\mathcal{K}}_{AB},
   \end{cases}
\end{equation} 
where $\check{\mathcal{K}}$ is defined as in $(\ref{new hyperbolic system})$. Here the first and the third equations of $(\ref{geometric quantities equ})$ are algebraic definitions, while the second and fourth equations of $(\ref{geometric quantities equ})$ are transport equations. The fluid quantities are defined using the equations 
\begin{equation}\label{fluid quantities for DV^2 equ}
\begin{cases}
\square\partial_{t}\Vert V\Vert^{2}=
H\quad &\text{in} \quad \Omega\\
\partial_{t}\Vert V\Vert^{2} =0 \quad &\text{on} \quad \partial\Omega
\end{cases}
\end{equation}
for $\partial_{t}\Vert V\Vert^{2}$, and
\begin{equation}\label{fluid quantities for V equ}
   \begin{cases} 
      \square\Theta^{I}=F\quad &\text{in}\quad \Omega\\
       \left(\partial_{t}^{2}+\gamma D_{n}\right)\Theta^{I}=f\quad &\text{on} \quad \partial\Omega
   \end{cases},
\end{equation}
for the $\Theta^{I}$, where $f$, $F$ and $H$ are the terms on the right-hand side of equations (\ref{2V boundary equ}), $(\ref{2V interior frame equ})$ and (\ref{2DV^2 frame equ}), respectively.

Let $\Theta^{(m)}$, $\Lambda^{(m)}$, $\Sigma^{(m)}$ denote the iterates of $\Theta$, $\partial_{t}\Vert V\Vert^{2}$ and $\Vert V\Vert^{2}$ respectively, where $\Sigma^{(m)}$ is determined by the relation $\partial_{t}\Sigma^{(m)}=\Lambda^{(m)}$. The iterates of the geometric quantities are denoted by $e_{I}^{(m)}$, $g^{(m)}$, $\Gamma_{IJ}^{K,(m)}$, $\check{W}^{(m)}$. The zeroth iterates are defined by simply extending the initial data. 

The $m$th iterate is then defined inductively using equations $(\ref{geometric quantities equ})$, $(\ref{fluid quantities for V equ})$ and $(\ref{fluid quantities for DV^2 equ})$. In this framework, the unknowns on the left-hand side of each equation are taken at the $m$th iterate, while the terms on the right-hand side, including the coefficients of $\square$, $\gamma$, and $\check{\mathcal{B}}$, use the $(m-1)$th iterate. Here $\check{\mathcal{B}}^{(m)}$, $g^{(m)}$, $R^{(m)}$, $\gamma^{(m)}$ are defined algebraically in terms of $\Theta^{(m)}$, $\Lambda^{(m)}$, $\Sigma^{(m)}$, $\Gamma^{(m)}$, $\check{W}^{(m)}$, as described previously. Specifically, the $m$th iterate of $\omega$ satisfies that 
\begin{equation*}
    \nabla_{V}\omega_{IJ}^{(m)}+\nabla_{I}\Theta^{K,(m)}\omega_{KJ}^{(m)}+\nabla_{J}\Theta^{K,(m)}\omega_{IK}^{(m)}=0.
\end{equation*}
Let 
\begin{equation*}
  \begin{aligned}
      \mathcal{E}_{k}^{m}(T)&:=\sup_{0\le t\le T}\sum_{\ell\le k}(\Vert \partial\partial_{t}^{\ell}\Theta^{(m)}(t)\Vert_{L^{2}(\Omega_{0})}^{2}+\Vert \partial\partial_{t}^{\ell}\Lambda^{(m)}(t)\Vert_{L^{2}(\Omega_{0})}^{2}+\Vert \partial_{t}^{\ell}R^{(m)}\Vert_{L^{2}(\Omega_{0})}^{2}+\Vert \partial_{t}^{\ell}\Theta^{(m)}(t)\Vert_{L^{2}(\partial\Omega_{0})}^{2})\\
      &+\int_{0}^{T}\Vert \partial\partial_{t}^{\ell}\Lambda^{(m)}\Vert_{L^{2}(\Omega_{0})}^{2}\mathrm{d}t.
  \end{aligned} 
\end{equation*}
\textbf{Step 1} We will claim that the $\mathcal{E}_{k}^{m}(T)$ is uniformly bounded. In other words, there are constants $A_{k}, k=0,\dots, K$ such that for all $m$
\begin{equation}\label{energy boundedness}
   \mathcal{E}_{k}^{m}(T)\le A_{k}
\end{equation} 
with $T$ sufficiently small. In light of Propositions $\ref{local existence for R}$, $\ref{local existence for V}$ and $\ref{local existence for Lambda}$, the proof of this claim is similar to Proposition $\ref{main prop}$. The key step is to verify the fulfillment of the hypotheses of Proposition \ref{local existence for R}. To establish the regularity of $\check{\mathrm{div}}\check{\mathcal{K}}_{\check{E}^{(m)}}^{(m)}$ and $\check{\mathrm{div}}\check{\mathcal{K}}_{\check{H}^{(m)}}^{(m)}$, careful attention must be paid to the placement of second-order derivatives on $\check{X}_{A}^{(m)}$. Fortunately, each second-order derivative of $\check{X}_{A}^{(m)}$ can be adeptly replaced by a commutator due to $R_{IJKL}=-R_{JIKL}$. For instance, we can consider the expression
\begin{equation*}
   \epsilon_{I}\epsilon_{J}\epsilon_{K}\check{R}_{JKLI}\left(\check{X}_{A}^{I}\check{D}_{K}\check{D}_{J}\check{X}_{B}^{L}\right)=\frac{1}{2}\epsilon_{I}\epsilon_{J}\epsilon_{K}\check{R}_{JKLI}\left(\check{X}_{A}^{I}\left[\check{D}_{K},\check{D}_{J}\right]\check{X}_{B}^{L}\right).
\end{equation*}  
In this case, the two derivatives acting on $\check{X}_{B}^{L}$ can be repalced by the commutator $\left[\check{D}_{K},\check{D}_{J}\right]$. Next, we apply Propositions $\ref{local existence for R}$, $\ref{local existence for V}$ and $\ref{local existence for Lambda}$ to bound lower-order terms in $L^{\infty}$, and apply Lemma \ref{boundaryV estimates} to bound $\Vert \partial\partial_{t}^{k}\Theta\Vert_{L^{2}([0,T]\times\partial\Omega_{0})}$. Subsequently, we can deduce (\ref{energy boundedness}) inductively.

\textbf{Step 2} We establish the convergence of the sequences $\Theta^{(m)}$, $\Lambda^{(m)}$, $e_{I}^{(m)}$, $\Gamma^{(m)}$, and $\check{W}^{(m)}$. To do this, by (\ref{energy boundedness}), it is necessary to consider the equations for the difference of the iterates with zero initial data. The subtle difference lies in the schematic forms of the equations satisfied by $\Lambda^{(m+1)}$:
\begin{equation*}
\begin{cases}
\square_{g^{(m)}}\Lambda^{(m+1)}=\left(\frac{1}{\eta^{2}}-1\right)\partial_{t}^{2}\Lambda^{(m+1)}+F(\Theta^{(m)},\Sigma^{(m)}), \quad&\text{in}\quad [0,T]\times \Omega\\
\Lambda^{(m+1)}=0, \quad &\text{on} \quad [0,T]\times \partial \Omega
\end{cases}.
\end{equation*}
Since $\frac{1}{\eta^{2}}-1$ is non-negative, we can also apply the energy estimates from Proposition $\ref{local existence for Lambda}$. We then employ the standard method to establish convergence. For further details, refer to the proof of Theorem 1.1 in \cite{Miao2}.

\textbf{Step 3} We demonstrate that our derived solution satisfies the original equations $(\ref{frame main equ})$ and $(\ref{Ricci equ})$. To achieve this goal, we will formulate homogeneous equations for the vanishing unknowns that are consistent with the structures previously encountered in our a priori estimates. In our calculation, the vanishing fluid quantities are denoted by 
\begin{equation}\label{the vanishing fluid quantities}
   \begin{aligned}
      &H:=D_{V}G+G\nabla_{I}\Theta^{I},\\
      &S:=\Sigma+\Theta^{I}\Theta_{I},\\
      &X_{I}:=\Theta^{J}\Theta_{J}\Theta_{I}+\frac{1}{2}\nabla_{I}\Sigma,\\
      &Y_{I}:=\Theta^{J}\nabla_{J}\left(\Theta^{K}\nabla_{K}\Theta_{I}\right)-\frac{1}{2}\nabla^{J}\Sigma\nabla_{J}\Theta_{I}+\frac{1}{2}\nabla_{I}(\Theta^{J}\nabla_{J}\Sigma)-\frac{1}{2}\epsilon_{J}\omega_{IJ}\nabla_{J}\Sigma,\\
      &D_{IJ}:=\nabla_{I}\Theta_{J}-\nabla_{J}\Theta_{I}-\omega_{IJ},\\
      &A:=\square\Sigma+2(\nabla^{I}\Theta^{J})(\nabla_{I}\Theta_{J})+2R_{KI}\Theta^{K}\Theta^{I}+2\epsilon_{K}\nabla_{K}\omega_{KI}\Theta^{I}-\frac{2}{G}(\Theta^{J}\nabla_{J}(\Theta^{K}\nabla_{K}G)+\nabla_{J}\Theta^{J}\Theta^{K}\nabla_{K}G).
   \end{aligned}
\end{equation}
These quantities are expected to vanish in $\Omega$, which are used in arriving at our derived system $(\ref{frame main equ})$. Similarly, the vanishing geometric quantities are
\begin{equation}\label{the vanishing geomeric quantities}
   \begin{aligned}
      &Q_{IJ}^{K}e_{K}:=-\Gamma_{IJ}^{K}e_{K}+\Gamma_{JI}^{K}e_{K}+\left[e_{I},e_{J}\right],\\
      &B_{IJKL}:=R_{[IJk]L},\\
      &\tilde{Y}_{ABCD}:=R_{ABCD}-R_{CDAB},\\
      &\Delta_{I}^{KL}:=\nabla^{J}R_{JIKL}-\chi_{\Omega}\left[\nabla_{K}\left(G\Theta_{L}\Theta_{I}-pm_{LI}+\frac{1}{2}G\Vert V\Vert^{2}m_{LI}\right)-\nabla_{L}\left(G\Theta_{K}\Theta_{I}-pm_{KI}+\frac{1}{2}G\Vert V\Vert^{2}m_{KI}\right)\right],\\
      &Z_{IJK}^{An}:=\nabla_{[I}R_{JK]An},\\
      &Z_{IJK}^{AB}:=\nabla_{[I}R_{JK]AB},\\
      &\tilde{R}_{AB}:=R_{AB}-\chi_{\Omega}(G\Theta_{A}\Theta_{B}-Pg_{AB}+\frac{1}{2}G\Vert V\Vert^{2}g_{AB}),\quad \mathrm{for}\quad A>B,\\
      &\tilde{R}_{An}:=R_{An},\\
      &\tilde{R}_{nn}:=R_{nn}-\frac{1}{2}\chi_{\Omega}g_{nn},\\
      &\tilde{S}_{\ MIJ}^{K}:=R_{\ MIJ}^{K}-(D_{I}\Gamma_{JM}^{K}-D_{J}\Gamma_{IM}^{K})-(\Gamma_{IL}^{K}\Gamma_{JM}^{L}-\Gamma_{JL}^{K}\Gamma_{IM}^{L})-(\Gamma_{JI}^{L}-\Gamma_{IJ}^{L})\Gamma_{LM}^{K}.
   \end{aligned}
\end{equation}
 In addition to the Einstein equations $(\ref{einstein equ})$, these vanishing quantities capture the symmetries of the curvature tensor, the differential Bianchi identities for the entire curvature tensor, the torsion-free property of the connection, and the Ricci identity. As with the fluid quantities, The vanishing of these quantities was employed in the derivation of our system  $(\ref{main structure equ})$.

In the following, indices are omitted from the defining letters when specific components are not essential. For instance, the symbols $S$, $Z$, $\tilde{S}$, $Z^{An}$, $Z^{AB}$, $\Delta^{An}$ and $\Delta^{AB}$ are used to represent the corresponding components. By definition, the fluid and geometric vanishing quantities are initially zero. To demonstrate that they remain zero throughout the evolution, it is necessary to derive a system of differential equations governing their behavior. After a tedious calculation, the equations for the fluid quantities yield 
\begin{equation}\label{equy}
   \begin{cases}
       \square Y_{I}=F_{Y,I}\quad &\mathrm{in}\quad \Omega\\
       Y_{I}=0\quad  &\mathrm{on}\quad \partial\Omega
   \end{cases}.
\end{equation}
Here the source term $F_{Y}$ depends algebraically on 
\begin{equation*}
   \tilde{S}, \quad\nabla\partial_{t}\tilde{S}, \quad\nabla\partial_{t}\tilde{Y}, \quad\nabla\tilde{S}, \quad\Delta, \quad\nabla Q,\quad D, \quad\nabla D,\quad X, \quad\nabla X, \quad\nabla^{2}X, \quad\nabla A.
\end{equation*}
The remaining vanishing fluid quantities can be related to $Y$ using the transport equations
\begin{equation}\label{equtrans}
   \partial_{t}H=F_{H},\quad \partial_{t}D_{IJ}=F_{D,IJ}, \quad \partial_{t}X_{I}=F_{X,I}, \quad\partial_{t}S=F_{S},\quad \partial_{t}A=F_{A},  
\end{equation}
where $F_{H}$ depends algebraically on $\tilde{R},\nabla D, Q$,
$F_{D}$ depends algebraically on
\begin{equation*}
      \tilde{Y}, \quad \nabla X,\quad Q,
\end{equation*}
$F_{X}$ depends algebraically on
\begin{equation*}
       Y, \quad D,\quad Q,
\end{equation*}
$F_{S}$ depends algebraically on $X$, and $F_{A}$ depends algebraically on
\begin{equation*}
     B, \quad \tilde{R},\quad Q, \quad D, \quad \nabla X.
\end{equation*}
For the geometric vanishing quantities, the main system of differential equations involves the unknowns $\Delta^{Jn}$ and $Z^{Jn}$. It can be shown that for each choice of $J$, the corresponding unknowns $\Delta_{B}^{Jn}$, $\Delta_{n}^{Jn}$, $Z_{012}^{Jn}$, and $Z_{BCn}^{Jn}$, with $B < C$, satisfy a coupled first-order hyperbolic system. This system serves as the foundation for establishing both $L^{2}$ and elliptic estimates. Moreover, since these quantities vanish on the boundary, the system can be further differentiated to yield additional results. After a cumbersome calculation, similar to the approach in Section \ref{subsec1.3.1}, we derive the following wave equations, which hold on $\cup_{0 \leq t \leq T} \Sigma_{t}$, 
\begin{equation}\label{equZn}
     \square Z_{IJK}^{An}=F_{Z,IJK}^{An}, \quad \square \Delta_{I}^{An}=F_{\Delta,I}^{An},
\end{equation}
where $F_{Z}^{An}$ depends algebraically on 
\begin{equation*}
    Z, \quad \nabla Z, \quad\chi_{\Omega}D, \quad\chi_{\Omega}\nabla D, \quad B,\quad \nabla B,\quad \Delta, \quad \nabla\Delta,
\end{equation*}
and $F_{\Delta}^{An}$ depends algebraically on
\begin{equation*}
    \Delta, \quad \nabla\Delta, \quad Z, \quad \nabla Z, \quad \chi_{\Omega}D, \quad \chi_{\Omega}\nabla D, \quad \tilde{Y}.
\end{equation*}
In the subsequent analysis, the values $0$, $1$, and $2$ refer to tangential components. Finally, the remaining vanishing geometric quantities can be expressed in terms of $Z$ and $\Delta$ through the following transport equations and first-order hyperbolic systems.:
\begin{equation}\label{equqs}
    \partial_{t}Q_{IJ}^{K}=F_{Q,IJ}^{K}, \quad \partial_{t}\tilde{S}_{IJKL}=F_{\tilde{S},IJKL},
\end{equation}
and 
\begin{equation}\label{equbzdr}
    \begin{aligned}
       &D_{0}B_{nACD}=F_{B,nACD}, \quad D_{0}Z_{IJK}^{AB}=F_{Z,IJK}^{AB}, \quad D_{0}\Delta_{I}^{AB}=F_{\Delta,I}^{AB},\\
       &D_{0}\tilde{R}_{nA}=F_{\tilde{R},nA},\quad D_{0}\tilde{R}_{10}=F_{\tilde{R},10},\\
       &D_{0}\tilde{R}_{20}-D_{1}\tilde{R}_{21}=F_{\tilde{R},20}, \quad D_{0}\tilde{R}_{21}-D_{1}\tilde{R}_{20}=F_{\tilde{R},21}.
    \end{aligned}
\end{equation}
Here $F_{Q}$ depends algebraically on
\begin{equation*}
    B,\quad \tilde{Y},\quad \tilde{S}, \quad Q,
\end{equation*}
$F_{\tilde{S}}$  depends algebraically on
\begin{equation*}
   Z,\quad \tilde{S}, \quad \tilde{Y},\quad \nabla\tilde{Y},\quad B,\quad Q,
\end{equation*}
$F_{B}$ depends algebraically on $\Delta,  Z$, $F_{\tilde{R}}$ depends algebraically on $Z, \Delta, \tilde{R}$ and $F_{Z}^{AB}$ and $F_{\Delta}^{AB}$ depends algebraically on $\tilde{Y}, \Delta, B, Z$. Equations \eqref{equy}, \eqref{equtrans}, \eqref{equZn}, \eqref{equqs}, and \eqref{equbzdr} form the desired system for the fluid and geometric vanishing quantities. Based on the a priori estimates in Section \ref{sec2}, the quantities in $(\ref{the vanishing fluid quantities})$ and $(\ref{the vanishing geomeric quantities})$ will remain zero for all $t \leq T$, provided they vanish initially.  We then complete the proof of the theorem.
\end{proof}
\bibliographystyle{plain}
\bibliography{mybib}
	
	\bigskip
	
	\centerline{\scshape Zeming Hao}
	\smallskip
	{\footnotesize
		\centerline{School of Mathematics and Statistics, Wuhan University}
		\centerline{Wuhan, Hubei 430072, China}
		\centerline{\email{2021202010062@whu.edu.cn}}
	}
	
	\medskip
	
	\centerline{\scshape Wei Huo}
	\smallskip
	{\footnotesize
		\centerline{School of Mathematics and Statistics, Wuhan University}
		\centerline{Wuhan, Hubei 430072, China}
		\centerline{\email{huowei@whu.edu.cn}}
	}
	
	\medskip
	
	\centerline{\scshape Shuang Miao}
	\smallskip
	{\footnotesize
		\centerline{School of Mathematics and Statistics, Wuhan University}
		\centerline{Wuhan, Hubei 430072, China}
		\centerline{\email{shuang.m@whu.edu.cn}}
	} 
	
\end{document}